\definecolor{xgray}{rgb}{0.75, 0.75, 0.75}
\newcommand{\hdl}{\hdashline[2pt/1pt]}
\newcommand{\hgl}{\noalign{\global\arrayrulewidth=0.6pt}\arrayrulecolor{xgray}\hline\noalign{\global\arrayrulewidth=0.4pt}\arrayrulecolor{black}}
\DeclareFontFamily{U}{stixextrai}{}
\DeclareFontShape{U}{stixextrai}{m}{n}
{ <-> stix-mathtt }{}
\let\originalleft\left
\let\originalright\right
\renewcommand{\left}{\mathopen{}\mathclose\bgroup\originalleft}
\renewcommand{\right}{\aftergroup\egroup\originalright}
\newcommand{\N}{\mathbb{N}}
\newcommand{\R}{\mathbb{R}}
\newcommand{\Ahb}{\mathbb{A}}
\newcommand{\Bhb}{\mathbb{B}}
\newcommand{\Chb}{\mathbb{C}}
\newcommand{\Dhb}{\mathbb{D}}
\newcommand{\Xhb}{\mathbb{X}}
\newcommand{\Ac}{\mathcal A}
\newcommand{\Bc}{\mathcal B}
\newcommand{\Cc}{\mathcal C}
\newcommand{\Dc}{\mathcal D}
\newcommand{\Kc}{\mathcal K}
\newcommand{\Lc}{\mathcal L}
\newcommand{\Pc}{\mathcal P}
\newcommand{\Xc}{\mathcal X}
\newcommand{\Yc}{\mathcal Y}
\newcommand{\Zc}{\mathcal Z}
\newcommand{\Gc}{\mathcal G}
\newcommand{\Ah}{\hat{A}}
\newcommand{\Bh}{\hat{B}}
\newcommand{\Ch}{\hat{C}}
\newcommand{\Dh}{\hat{D}}
\newcommand{\Eh}{\hat{E}}
\newcommand{\Fh}{\hat{F}}
\newcommand{\eps}{\varepsilon}
\newcommand{\al}{\alpha}
\newcommand{\ga}{\gamma}
\newcommand{\De}{\Delta}
\newcommand{\te}[1]{\text{\ \ #1\ \ }}
\newcommand{\sy}{(\bullet)^T}
\newcommand{\eig}{\operatorname{eig}}
\newcommand{\diag}{\operatorname{diag}}
\newcommand{\tr}{\operatorname{tr}}
\newcommand{\col}{\operatorname{col}}
\newcommand{\mat}[2]{\left(\begin{array}{#1}#2\end{array}\right)}
\newcommand{\matl}[4]{\left#1\begin{array}{#2}#3\end{array}\right#4}
\newcommand{\smat}[1]{\left(\begin{smallmatrix}#1\end{smallmatrix}\right)}
\newcommand{\opt}{\text{opt}}
\newcommand{\cl}{\prec}
\newcommand{\cg}{\succ}
\renewcommand{\t}{\tilde}
\newcommand{\ti}{\times}
\newcommand{\Hz}{\mathcal{H}_2}
\newcommand{\Hi}{\mathcal{H}_\infty}
\newcommand{\RP}{\mathcal{R}_p}
\newtheorem{thm}{Theorem}
\newtheorem{lemm}{Lemma}
\newtheorem{rema}{Remark}
\newtheorem{defn}{Definition}
\newtheorem{prob}{Problem}
\newtheorem{exam}{Example}
\newcommand{\Sb}{\mathbb{S}}
\newcommand{\wh}{\hat{w}}
\newcommand{\zh}{\hat{z}}
\newcommand{\He}{\text{He}}
\newcommand{\bu}{\bullet}
\newcommand{\bs}[1]{\boldsymbol{#1}}
\newcommand{\Tr}{\R_{\scalebox{0.6}{$(\bullet\,0)$}}}
\newcommand{\Trf}{\R_{\scalebox{0.6}{$(\bullet\,\bullet)$}}}
\newcommand{\Trp}{\R_{\scalebox{0.7}{$\left(\hspace{-0.2ex}\begin{smallmatrix}\bullet\,0\\[-0.2ex]\bullet\,0\end{smallmatrix}\hspace{-0.2ex}\right)$}}}
\newcommand{\Tc}{\R_{\scalebox{0.7}{$\left(\hspace{-0.2ex}\begin{smallmatrix}0\\[-0.2ex]\bullet\end{smallmatrix}\hspace{-0.2ex}\right)$}}}
\newcommand{\Tcf}{\R_{\scalebox{0.7}{$\left(\hspace{-0.2ex}\begin{smallmatrix}\bullet\\[-0.2ex]\bullet\end{smallmatrix}\hspace{-0.2ex}\right)$}}}
\newcommand{\Tcp}{\R_{\scalebox{0.7}{$\left(\hspace{-0.2ex}\begin{smallmatrix}0\,0\\[-0.2ex]\bullet\,\bullet\end{smallmatrix}\hspace{-0.2ex}\right)$}}}
\newcommand{\T}{\R_{\scalebox{0.7}{$\left(\hspace{-0.2ex}\begin{smallmatrix}\bullet\,0\\[-0.2ex]\bullet\,\bullet\end{smallmatrix}\hspace{-0.2ex}\right)$}}}
\newcommand{\Ur}{{\mathscr{R}_{\scalebox{0.6}{$(0\,\bullet)$}}}}
\newcommand{\U}{{\mathscr{U}_{\scalebox{0.7}{$\left(\hspace{-0.2ex}\begin{smallmatrix}0\,\bullet\\[-0.2ex]0\,0\end{smallmatrix}\hspace{-0.2ex}\right)$}}}}
\newcommand{\Lo}{\mathscr{L}_{\scalebox{0.7}{$\left(\hspace{-0.2ex}\begin{smallmatrix}\bullet\,0\\[-0.2ex]\bullet\,\bullet\end{smallmatrix}\hspace{-0.2ex}\right)$}}}
\def\BibTeX{{\rm B\kern-.05em{\sc i\kern-.025em b}\kern-.08em
    T\kern-.1667em\lower.7ex\hbox{E}\kern-.125emX}}
\begin{document}
\title{Gain-Scheduling Controller Synthesis for Nested Systems with Full Block Scalings}
\author{Christian A. R\"osinger and Carsten W. Scherer, \IEEEmembership{Fellow, IEEE}
\thanks{
Funded by Deutsche Forschungsgemeinschaft (DFG, German Research Foundation) under Germany's Excellence Strategy - EXC 2075 - 390740016. We acknowledge the support by the Stuttgart Center for Simulation Science (SimTech).
}
\thanks{The authors are with the Institute of Mathematical Methods in the Engineering Sciences, Numerical Analysis and Geometrical Modeling, Department of Mathematics, University of Stuttgart, 70569 Stuttgart, Germany (e-mail: christian.roesinger@imng.uni-stuttgart.de, carsten.scherer@imng.uni-stuttgart.de).
\textit{(Corresponding author: Christian~A.~R\"osinger.)}
}
}

\maketitle

\begin{abstract}
This work presents a framework to synthesize structured gain-scheduled controllers for structured plants whose dynamics change according to time-varying scheduling parameters. Both the system and the controller are assumed to admit descriptions in terms of a linear time-invariant system in feedback with so-called scheduling blocks, which collect all scheduling parameters into a static system. We show that such linear fractional representations permit to exploit a so-called lifting technique in order to handle several structured gain-scheduling design problems. These could arise from a nested inner and outer loop control configuration with partial or full dependence on the scheduling variables. Our design conditions are formulated in terms of convex linear matrix inequalities and permit to handle multiple performance objectives.
\end{abstract}

\begin{IEEEkeywords}
Control system synthesis, linear matrix inequalities, decentralized control, optimal scheduling.	
\end{IEEEkeywords}

\section{Introduction}
\IEEEPARstart{I}{n} this work, we consider gain-scheduled synthesis based on linear fractional representations (LFRs) for
the standard configuration in Fig.~\ref{figs}, as motivated by the early works \cite{packard1994}, \cite{apkarian1995}.
Here, $G(\De)$ is a linear parametrically-varying (LPV) system affected by some matrix-valued time-varying uncertainty $\De$ whose current value can change arbitrarily fast and is measured online. For instance, in a concrete application, $\De$ can represent the rotor speed of the generators of a wind turbine \cite{tien2016}, or the variation of the longitudinal speed in a car \cite{mustaki2019}.
The philosophy of gain-scheduling synthesis \cite{packard1994, becker1995, apkarian1995, wu1996, helmersson98, scorletti1998} is based on the idea to design a $\De$-dependent controller $K(\De)$ in Fig.~\ref{figs} which achieves better performance
if compared to a robust controller that does not depend on $\Delta$.

We present a flexible synthesis framework encompassing gain-scheduled problems for different nested interconnections of LPV systems. As inspired by \cite{voulgaris2000},
one specific configuration covered by our approach is shown in Fig.~\ref{fig0}, to which we refer as \textit{partial gain-scheduling} in the sequel. This configuration involves an outer loop with a linear time invariant (LTI) plant $P_2$ and a controller $C_2$, interconnected  with a gain-scheduled inner loop consisting of an LPV system $P_1(\De)$ and a scheduled controller $C_1(\De)$.
\begin{figure}
	\begin{center}
		\includegraphics[trim=2 0 2 2, clip,height=0.126\textwidth]{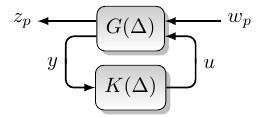}
		\caption{Gain-scheduling configuration}
		\label{figs}
	\end{center}
\end{figure}
Note that the outer loop with $P_2$ and $C_2$ is affected by $P_1(\De)$ and $C_1(\De)$ by one-sided communication links $\xi$ and $\eta$, respectively. Such nested configurations are of practical interest, e.g., in the control of induction motors \cite{blanchini2012}, where the physical constraints impose a fast $\De$-dependent inner loop with $\De$ being the rotor speed of the motor, and a slow outer mechanical loop.
Further, our framework permits us to handle the case that $P_2=P_2(\De)$ and $C_2=C_2(\De)$ are also $\De$-dependent in Fig.~\ref{fig0}, which is called \textit{triangular gain-scheduling} for reasons to be seen later. Such structures emerge, for instance, in a wind park if wind turbines are interacting in a nested fashion and where $\De$ depends on the wind speed and some torque coefficients~\cite{bucc2019}.

Since we are interested in embedding these problems into a unifying framework for analysis and synthesis, we show how to translate these nested configurations into Fig.~\ref{figs} by making use of the flexibility of LFRs. This leads to controller design problems for particularly structured $G(\De)$ and $K(\De)$. One of the main contributions of this work is to show, for the first time, that these structured design problems can be solved by convex optimization techniques. This is achieved through a general design framework for nested gain-scheduled control problems based on linear matrix inequalities (LMIs).

Moreover, a central aspect of our design framework lies in the flexibility for handling a combination of different criteria in one shot, such as, e.g., stability, $\Hi$-and $\Hz$-performance objectives.
\begin{figure}
	\begin{center}
		\includegraphics[trim=6 0 6 2.5, clip, height=0.247\textwidth]{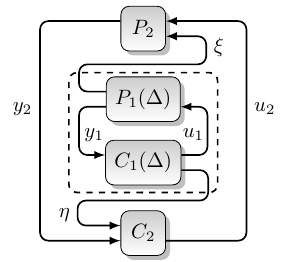}
		\caption{Nested gain-scheduling loop}
		\label{fig0}
	\end{center}
\end{figure}
Since $\Hz$-control requires to guarantee finiteness of the closed-loop norm, we also show how to incorporate the recent approaches \cite{roesinger2019b}, \cite{roesinger2020} based on $D$-, positive real and full block scalings into our framework. These works focus on a certain structured $\Hz$-design problem to render the direct feedthrough term of $w_p\to z_p$ zero in Fig.~\ref{figs}, which in turn guarantees finiteness of the closed-loop $\Hz$-norm by design.

On the one hand, if $P_1(\De)$ and $C_1(\De)$ are $\De$-independent LTI systems in Fig.~\ref{fig0}, LMI solutions are given for nominal, nested $\Hi$- and $\Hz$-design in \cite{scherer2013}, \cite{scherer2014}, while \cite{lessard2015} uses coupled Riccati equations to solve the $\Hz$-case. On the other hand, without the outer loop in Fig.~\ref{fig0}, a gain-scheduled $\Hi$-solution with full block scalings is given, e.g., in \cite{scherer2000} to design a suitable $\De$-dependent controller $C_1(\De)$ for some parameter-dependent plant $P_1(\De)$. In order to handle the $\Hz$-analogue, we have recently shown in \cite{roesinger2020} how to use the so-called \textit{lifting technique} in the context of gain-scheduling. This lifting technique embeds the original gain-scheduled problem into some new design framework such that synthesis can be performed using LMIs. As our main technical contribution, we show that lifting is the key enabling technique to also handle nested gain-scheduling. This includes Fig.~\ref{fig0} (partial gain-scheduling) and triangular gain-scheduling, which turns out to be the most challenging case since it involves coupled $\De$-structures between the inner and the outer loop.
To the best knowledge of the authors, no other methods exist to solve the partial/triangular gain-scheduling problem in this generality.

The paper is organized as follows. After introducing some notation, we illustrate the main design steps for a special $\Hz$-gain-scheduling problem without nested structures in Sec.~\ref{sech2}. The exposition is tailored to the seamless extension to nested gain-scheduling in Sec.~\ref{secvi}, with the corresponding analysis and synthesis conditions presented for multiple objectives in Secs.~\ref{sec3} and \ref{secsyn}, respectively. We conclude this work by giving an illustrative numerical example in Sec.~\ref{secnum}.

\textbf{Notation.}
We give the basic notations here, and particular ones for structured matrices and inequalities in Secs.~\ref{secvi} and~\ref{sec3}, respectively.
If $\N_0$ is the set of nonnegative integers,
$\N_0^p$ denotes
the set of $p$-tuples  $a=(a_1,\ldots,a_p)\in\N_0^p$ of length $|a|:=a_1+\cdots+a_p$. If $m,n\in\N_0^p$,
we denote by $\R^{m\ti n}$
the set of real $|m|\ti|n|$ matrices that carry a row/column-partition as induced by the entries of the tuples $m$/$n$, while $\Sb^n\subset\R^{n\ti n}$ is the associated subset of partitioned real symmetric matrices.
Further, $I$ is the identity matrix, an $n$-partition of which being specified as $I_n:=\diag(I_{n_1},\ldots,I_{n_p})$.
We use $\bullet$ for irrelevant matrix entries and $\col(M_1,\ldots,M_k):=(M_1^T,\ldots,M_k^T)^T$ for vectors or matrices $M_1,\ldots,M_k$.
If $P\in\R^{m\ti m}$, $M\in\R^{m\ti n}$, let $\tr(P)$ be the trace of $P$, $\eig(P)$ be the set of eigenvalues of $P$, and we write $\sy PM:=M^TPM$ and $\He(P):=P+P^T$.

\section{A special case: $\Hz$-gain-scheduling}\label{sech2}

After giving a brief introduction of the gain-scheduled synthesis problem, we
motivate the cornerstones of our design procedure, including the lifting technique, for the situation of $\Hz$-gain-scheduling without nested constraints.

Let us consider the standard loop for gain-scheduling in Fig.~\ref{figs}. For some given value set $\mathbf{V}=\text{Co}\{\De_1,\ldots,\De_N\}$, the convex hull of finitely many matrices $\De_i\in\R^{r_\De\ti s_\De}$, let $0\in\mathbf{V}$ and assume that $\De$ lies in the set $\mathbf{\De}:=C([0,\infty),\mathbf{V})$ of matrix-valued, arbitrarily fast time-varying (continuous) uncertainties.
Moreover, $G(\De)$ is assumed to admit the LFR
\begin{equation}\label{e1}
\mat{c}{\dot{x}\\\hline \hat{z}\\ z_p\\ y}=\matl({c|ccc}{
	\Ah_{11}&\Ah_{12}&\Bh_1^p&\Bh_1\\\hline
	\Ah_{21}&\Ah_{22}&\Bh_2^p&\Bh_2\\
	\Ch_1^p&\Ch_2^p&\Dh^p&\Eh\\
	\Ch_1&\Ch_2& \Fh&\Dh})\mat{c}{x\\\hline\hat{w}\\ w_p\\ u},\quad
\hat{w}=\hat{\De}(\De)\hat{z}
\end{equation}
where $\hat{\De}:\mathbf{V}\to\R^{r\times s}$ is an affine map and $\Delta$ is contained in $\mathbf{\De}$; note that the dimension of $\hat{\De}(\De)$ might differ from that of $\De$.
In this representation, $\hat{w}\to \hat{z}$ is the uncertainty channel, $w_p\to z_p$ serves to impose
performance specifications while $u\to y$ is the channel to interconnect LPV controllers.
Recall that this encompasses standard LFRs with $\hat{\De}(\De)$ admitting a block-diagonal
structure with several repeated time-varying parametric uncertainties on the diagonal
\cite{packard1994}, \cite{apkarian1995}, \cite{zhou1996}; see \cite{zhou1996} for a general introduction to LFRs.

\begin{exam}\label{ex1}
	If $G(\De)$ in Fig.~\ref{figs} is given as the uncertain system $\smat{\dot{x}\\y}=\smat{\De^2&3\\4&0}\smat{x\\u}$
with $\Delta\in \mathbf{\De}$ where $\mathbf{V}=[-1,1]$, we sequentially define $\hat{w}_i:=\De\hat{z}_i$ for $i=1,2$ with $\hat{z}_1:=x$, $\hat{z}_2:=\hat{w}_1$ to obtain the LFR
	$$
	\mat{c}{\dot{x}\\\hline \hat{z}_1\\\hat{z}_2\\ y}=\mat{c|ccc}{0&0&1&3\\\hline 1&0&0&0\\0&1&0&0\\4&0&0&0}\mat{c}{x\\\hline \hat{w}_1\\\hat{w}_2\\ u},\ \mat{c}{\hat{w}_1\\\hat{w}_2}=\mat{cc}{\De&0\\0&\De}\mat{c}{\hat{z}_1\\\hat{z}_2};
	$$
this indeed matches (\ref{e1}) with block-diagonal $\hat{\De}(\De):=\smat{\De&0\\0&\De}$.
\end{exam}

Analogously to the plant, we describe the gain-scheduled controller $K(\De)$ in Fig.~\ref{figs} as an LFR
\begin{equation}\label{e2}
\mat{c}{\dot{x}_c\\\hline \zh_{c}\\ u}=
\matl({c|cc}{\Ah_{11}^c&\Ah_{12}^c&\Bh^c_1\\\hline \Ah_{21}^c&\Ah_{22}^c&\Bh^c_2\\  \Ch^c_1&\Ch^c_2&\Dh^c})
\mat{c}{x_c\\\hline \wh_c\\ y},
\quad \wh_{c}=\hat{\De}_c(\De)\zh_{c}
\end{equation}
with $\De\in\mathbf{\De}$ and a so-called scheduling function
$$
\hat{\De}_c:\mathbf{V}\to
\R^{r^c\ti s^c},\ \De\mapsto\hat{\De}_c(\De).
$$

The interconnection of \eqref{e1} and \eqref{e2} (through the shared signals $u$ and $y$) then admits the LFR
\begin{equation}\label{eq3}
\mat{c}{\dot{x}_{e}\\ \zh_e\\ z_p}=\mat{ccc}{\hat{\Ac}_{11}&\hat{\Ac}_{12}&\hat{\Bc}_{1}\\ \hat{\Ac}_{21}&\hat{\Ac}_{22}&\hat{\Bc}_{2}\\\hat{\Cc}_{1}&\hat{\Cc}_{2}&\hat{\Dc}}\mat{c}{x_{e}\\ \wh_e\\ w_p},\quad \wh_e=\hat{\De}_{e}(\De)\zh_e
\end{equation}
with the closed-loop signals $x_{e}:=\col(x,x_c)$, $\zh_e:=\col(\zh,\zh_c)$, $\wh_e:=\col(\wh,\wh_c)$ and the extended scheduling block
\begin{equation}\label{dele}
\hat{\De}_{e}(\De):=
\diag(\hat{\De}(\De),\hat{\De}_c(\De))
\end{equation}
of dimension $(r,r^c)\times (s,s^c)$. Recall that the closed-loop matrices $\hat{\Ac}_{ij},\,\hat{\Bc}_i,\,\hat{\Cc}_j,\,\hat{\Dc}$ can be obtained by standard computations, as shown in Sec.~\ref{seclifting} for a different scenario.

Gain-scheduling synthesis then means to
design matrices $\Ah^c_{ij},\,\Bh^c_i,\,\Ch^c_j,\,\Dh^c$ and a possibly nonlinear
scheduling function $\hat{\De}_c(.)$ such that the controlled system \eqref{eq3} satisfies a desired performance specification for all $\De\in\mathbf{\De}$, as made precise in the sequel.
We emphasize that the controller \eqref{e2} is indeed gain-scheduled, in the sense that it
requires knowledge of the value of $\hat{\De}_c(\De(t))$ at each time instant $t\geq 0$ for its implementation; explicit bounds on the to-be-designed controller order and the size of $\hat{\De}_c(.)$ are given for each (nested) synthesis result. For logical similarities, we denote the system matrices associated to the integrator and the scheduling parameter in a similar fashion by using different indices, as e.g., $\Ah_{ij},\Bh_i,\,\Ch_j$ in \eqref{e1}.

\subsection{Problem formulation}\label{classes4}
Let us now formulate the $\Hz$-gain scheduling problem.
We assume that the direct feedthrough term of $u\to y$ in (\ref{e1}) vanishes, i.e. $\Dh=0$. This assumption ensures well-posedness of the controlled interconnection \eqref{eq3}, and, as essential for synthesis in Sec.~\ref{syn4}, renders the closed-loop matrices affinely dependent on the controller matrices (\ref{e2}).	
As widely spread over the existing gain-scheduling literature, we
consider the problem without any other structural constraints in the LFRs (\ref{e1}), (\ref{e2}), i.e., the scheduling function and the describing matrices are unstructured with
$$
\begin{aligned}
&\mat{cccc}{
	\Ah_{11}&\Ah_{12}&\Bh_1^p&\Bh_1\\
	\Ah_{21}&\Ah_{22}&\Bh_2^p&\Bh_2\\
	\Ch_1^p&\Ch_2^p&\Dh^p&\Eh\\
	\Ch_1&\Ch_2& \Fh&\Dh}
\in\mat{llll}{\R^{n\ti n}&\R^{n\ti r}&\R^{n\ti m^p}&\R^{n\ti m}\\\R^{s\ti n}&\R^{s\ti r}&\R^{s\ti m^p}& \R^{s\ti m}\\\R^{k^p\ti n}&\R^{k^p\ti r}&\R^{k^p\ti m^p}&\R^{k^p\ti m}\\\R^{k\ti n}&\R^{k\ti r}&\R^{k\ti m^p}&0_{k\ti m}},\\
&\quad \hat{\De}(\De)=\De\ \  \text{being of dimension $r\ti s=r_\De\times s_\De$,}
\end{aligned}
$$
and
$$
\begin{aligned}
&\mat{ccc}{\Ah^c_{11}&\Ah^c_{12}&\Bh^c_1\\\Ah^c_{21}&\Ah^c_{22}&\Bh^c_2\\\Ch^c_1&\Ch^c_2&\Dh^c}\in\mat{ccc}{\R^{n^c\ti n^c}&\R^{n^c\ti r^c}&\R^{n^c\ti k}\\\R^{s^c\ti n^c}&\R^{s^c\ti r^c}& \R^{s^c\ti k}\\\R^{m\ti n^c}&\R^{m\ti r^c}&\R^{m\ti k}}.
\end{aligned}
$$
This is addressed as the \textit{unstructured gain-scheduling problem} and briefly expressed by
$$
G(\De)\in\Gc_1\quad \text{and}\quad K(\De)\in\Kc_1.
$$
Recall that the controlled LFR (\ref{eq3}) is called \textit{well-posed} if $I-\hat{\De}_{e}(\De)\hat{\Ac}_{22}$ is non-singular for all $\De\in\mathbf{V}$.
For brevity, we call (\ref{eq3}) \textit{stable} if the system is exponentially stable, i.e., there exist constants $\al>0$ and $c\geq0$ such that every solution of (\ref{eq3}) for $w_p=0$ and for any $x_e(0)\in\R^n$, $\De\in\mathbf{\De}$ satisfies $\|x_{e}(t)\|\leq ce^{-\al t}\|x_{e}(0)\|$ for all $t\geq 0$. Since our scaling parameter $\De\in\mathbf{\De}$ is time-varying in \eqref{eq3}, we use the definition of the $\Hz$-norm for linear time-varying systems in the stochastic context \cite{pagfer00}.
\begin{prob}\label{problem1}
	For a given plant $G(\De)\in\Gc_1$ and $\ga>0$, find a controller $K(\De)\in\Kc_1$ such that the closed-loop  (\ref{eq3}) is well-posed, stable, and such that the squared $\Hz$-norm of $w_p\to z_p$ is smaller than $\ga$ for $x_e(0)=0$ and for all $\De\in\mathbf{\De}$.
\end{prob}

\subsection{Closed-loop analysis for original systems}\label{h2sec22}

For $\Hz$-performance, we suppose that the plant and controller LFR (\ref{e1}), (\ref{e2}) are built such that, after interconnecting $\hat{\De}_e(\De)$ in (\ref{eq3}), the direct feedthrough term of $ w_p\to z_p$ vanishes. In Sec.~\ref{classes3}, we show that our general design procedure comes with the strong advantage that we can enforce this condition with tailored LFRs for (\ref{e1}), (\ref{e2}). Under this hypothesis, let us recall a well-known analysis result based on the full block $S$-procedure. By using the class of multipliers
\begin{equation}\label{posconstr}
\mathbf{\hat{P}}:=\left\{\hat{\Pc}\in\Sb^{(r,r^c,s,s^c)}\ \big|\
\sy \hat{\Pc}\smat{\hat{\De}_{e}(\De)\\I_{(s,s^c)}}\cg0\
\forall
\De\in\mathbf{V}\right\}
\end{equation}
for the extended scheduling block \eqref{dele}, to which we also refer as \textit{full block scalings}, we can characterize the requirements in Problem~\ref{problem1} by the feasibility of two
standard matrix inequalities as follows \cite{scherer2000}.
\begin{thm}\label{theo:analysish2}
	Problem~\ref{problem1} is solved for $G(\De)\in\Gc_1$, $K(\De)\in\Kc_1$ if there exist $\Xc_1\cg0$ and $\hat{\Pc}\in\mathbf{\hat{P}}$, $Z\cg0$ with $\tr(Z)<1$ such that the closed-loop system (\ref{eq3}) fulfills
	\begin{equation}\label{eqan1}
	\begin{aligned}
	\sy\mat{cc;{2pt/1pt}c;{2pt/1pt}c}{0&\Xc_1&0&0\\\Xc_1&0&0&0\\\hdl 0&0&\hat{\Pc}&0\\\hdl 0&0&0&-\gamma I}\mat{ccc}{I_{(n,n^c)}&0&0\\\hat{\Ac}_{11}&\hat{\Ac}_{12}&\hat{\Bc}_{1}\\\hdl 0&I_{(r,r^c)}&0\\\hat{\Ac}_{21}&\hat{\Ac}_{22}&\hat{\Bc}_{2}\\\hdl 0&0&I}&\cl0,\\
	\sy\mat{c;{2pt/1pt}c;{2pt/1pt}c}{-\Xc_1&0&0\\\hdl 0&\hat{\Pc}&0\\\hdl 0&0&Z^{-1}}\mat{cc}{I_{(n,n^c)}&0\\\hdl 0&I_{(r,r^c)}\\\hat{\Ac}_{21}&\hat{\Ac}_{22}\\\hdl \hat{\Cc}_{1}&\hat{\Cc}_{2}}&\cl0.
	\end{aligned}
	\end{equation}
\end{thm}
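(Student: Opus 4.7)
The proposed strategy is to apply the full block $S$-procedure to each of the inequalities in (\ref{eqan1}) in order to absorb the LFR relation $\hat{w}_e=\hat{\De}_e(\De)\hat{z}_e$ via the multiplier $\hat{\Pc}\in\mathbf{\hat{P}}$. For the first LMI, I would multiply on the right by the column $\col(x_e,\hat{w}_e,w_p)$ and on the left by its transpose, and identify $\dot{x}_e$ and $\hat{z}_e$ through~(\ref{eq3}). The resulting pointwise inequality reads
\[
\frac{d}{dt}(x_e^T\Xc_1 x_e)+\sy{\hat{\Pc}}{\smat{\hat{w}_e\\\hat{z}_e}}<\gamma\,w_p^Tw_p.
\]
Substituting $\hat{w}_e=\hat{\De}_e(\De)\hat{z}_e$ and invoking the defining property (\ref{posconstr}) of $\mathbf{\hat{P}}$ renders the scaling term nonnegative, so that $V(x_e):=x_e^T\Xc_1 x_e$ obeys $\dot V<\gamma\,w_p^Tw_p$ along every admissible trajectory. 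Well-posedness then follows by specializing to $x_e=0$ and $w_p=0$: any nonzero $\hat{w}_e$ with $\hat{w}_e=\hat{\De}_e(\De)\hat{\Ac}_{22}\hat{w}_e$ would make the left-hand side strictly negative while the scaling contribution remains nonnegative, which is impossible; hence $I-\hat{\De}_e(\De)\hat{\Ac}_{22}$ is invertible for every $\De\in\mathbf{V}$. Setting $w_p=0$ and exploiting compactness of $\mathbf{V}$ together with the affineness of $\hat{\De}_e(\cdot)$ then yields uniform exponential stability on $\mathbf{\De}$.

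I would treat the second LMI analogously by testing against $\col(x_e,\hat{w}_e)$ and discarding the nonnegative scaling contribution, which produces the pointwise bound
\[
z_p^T Z^{-1} z_p<x_e^T\Xc_1 x_e,
\]
where $z_p=\hat{\Cc}_{1}x_e+\hat{\Cc}_{2}\hat{w}_e$ is the closed-loop performance output (whose feedthrough from $w_p$ vanishes by hypothesis). Eliminating $\hat{w}_e$ rewrites this as $\hat{\Cc}(\De)^T Z^{-1}\hat{\Cc}(\De)\prec\Xc_1$ for the closed-loop output matrix $\hat{\Cc}(\De)$, and a Schur complement gives the dual form $\hat{\Cc}(\De)\Xc_1^{-1}\hat{\Cc}(\De)^T\prec Z$ for every $\De\in\mathbf{V}$.

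To close the argument I would adopt the stochastic characterization of the time-varying $\Hz$-norm from \cite{pagfer00}: drive $w_p$ by white noise of unit intensity and follow the state covariance $\Sigma(t):=\E[x_e(t)x_e(t)^T]$ with $\Sigma(0)=0$. A Schur complement of the first LMI against its $-\gamma I$ block, together with elimination of $\hat{w}_e$, produces the Lyapunov-type inequality $\He(\hat{\Ac}(\De)\Xc_1^{-1})+\gamma^{-1}\hat{\Bc}(\De)\hat{\Bc}(\De)^T\prec 0$ uniformly in $\De\in\mathbf{V}$. A standard comparison argument then shows that $\bar{\Sigma}:=\gamma\Xc_1^{-1}$ is a strict upper bound of $\Sigma(t)$, and combining with the output bound obtained above yields
\[
\E[z_p^Tz_p]=\tr(\hat{\Cc}(\De)\Sigma\hat{\Cc}(\De)^T)\leq\gamma\,\tr(\hat{\Cc}(\De)\Xc_1^{-1}\hat{\Cc}(\De)^T)<\gamma\,\tr(Z)<\gamma.
\]
The principal technical obstacle is this last step: justifying that the static bound $\bar{\Sigma}$ dominates the time-varying covariance uniformly in $\De\in\mathbf{\De}$ rests on a careful invariance argument for the Lyapunov differential inequality under the initial condition $\Sigma(0)=0$. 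The preceding $S$-procedure reductions and subsequent trace manipulations are then routine.
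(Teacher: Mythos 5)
Your argument is correct and is precisely the standard full block $S$-procedure reasoning that the paper relies on: Theorem~\ref{theo:analysish2} is stated without proof and attributed to \cite{scherer2000}, and your chain (pointwise dissipation inequality, well-posedness by contradiction on the uncertainty channel, the observability-type bound $\hat{\Cc}(\De)\Xc_1^{-1}\hat{\Cc}(\De)^T\prec Z$, and the covariance comparison against $\gamma\Xc_1^{-1}$ under the stochastic $\Hz$-definition of \cite{pagfer00}) is exactly that proof. The two points you flag — uniformity over $\De\in\mathbf{V}$ (which follows from compactness of $\mathbf{V}$ and strictness of the LMIs) and the Lyapunov differential comparison from $\Sigma(0)=0$ — are indeed the only steps needing care, and both go through as you describe.
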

\phantom{space}

Theorem~\ref{theo:analysish2} forms the basis for a convex characterization of the existence of a gain-scheduled controller \eqref{e2} such that the formulated analysis conditions are satisfied for some full block scaling $\hat{\Pc}\in\mathbf{\hat{P}}$.
Technically, all existing scaling approaches to obtain such gain-scheduled synthesis results
are based on the elimination of the ingredients defining the controller, as seen
for special plants in \cite{scherer2000}.
However, it is well-known that, even for nominal synthesis, such an elimination step is
infeasible for the full $\Hz$-conditions in (\ref{eqan1}), since two inequalities are involved
which are coupled through the matrices $\hat{\Ac}_{21},\,\hat{\Ac}_{22}$.
It is among the key contributions of this paper to overcome this deficiency through what we call {\em lifting for gain-scheduling}.

\subsection{Closed-loop analysis for lifted systems}\label{seclifting}

The motivation for lifting is as follows. If $\hat{\Pc}\in\mathbf{\hat{P}}$ is fixed, the anti-diagonal structure of $\smat{0&\Xc_1\\\Xc_1&0}$ in \eqref{eqan1} is essential to obtain convex conditions for synthesizing nominal controllers with a suitable transformation of $\Ah^c_{ij}$, $\Bh^c_i$, $\Ch^c_j$ and $\Dh^c$ in \eqref{e2}, as addressed in detail in
\cite{masubuchi1998}, \cite{scherer1997}. In gain-scheduling synthesis, $\hat{\Pc}$ is an unstructured variable and it remains fully unclear how to modify the transformation from \cite{masubuchi1998}, \cite{scherer1997} to convexify \eqref{eqan1}.
It is a decisive innovation of this paper to overcome this trouble for (even nested) gain-scheduling synthesis by lifting the descriptions of the system and the controller such that, in the resulting analysis
conditions, $\hat{\Pc}$ is replaced by an anti-diagonally structured block
$\smat{0&\Pc\\\Pc&0}$ with a suitable new scaling matrix $\Pc$.
In our notation, we use a hat for the initial system components \eqref{e1}-\eqref{dele} and the scalings \eqref{posconstr} to distinguish them from the lifted descriptions denoted without a hat in the sequel.

Lifting amounts to building an augmented LFR of $G(\De)$ in (\ref{e1}) as follows. The uncertainty equation
$\hat{w}=\hat{\De}(\De)\hat{z}$ in (\ref{e1}) can be expressed as $\hat{w}=-\hat{w}+2\hat{\De}(\De)\hat{z}$ which leads to $w=\De_l(\De)z$ for $\De\,{\in}\,\mathbf{\De}$ with
\begin{equation}\label{liftedblock}
w\,{:=}\,z\,{:=}\,\mat{c}{\hat{w}\\\hat{z}},\ \ \De_l(\De)\,{:=}\,\mat{cc}{-I_r&2\hat{\De}(\De)\\0&I_s}.
\end{equation}
By employing analogous steps for the LTI system in \eqref{e1}, we arrive at an equivalent reformulation of the overall system as
\begin{equation}\label{eq5}
\begin{aligned}
&\mat{c}{\dot{x}\\\hline z\\\hdl z_p\\\hdl y}=\mat{c|c;{2pt/1pt}c;{2pt/1pt}c}{A_{11}&A_{12}&B_1^{p}&B_1\\\hline A_{21}&A_{22}&B_2^{p}&B_2\\\hdl C_1^{p}&C_2^{p}&D^{p}&E\\\hdl C_1&C_2&F&0}
\mat{c}{x\\\hline w\\\hdl w_p\\\hdl u}\\
&=
	\mat{c|cc;{2pt/1pt}c;{2pt/1pt}c}{
	\Ah_{11}&\Ah_{12}&0&\Bh_1^p&\Bh_1\\\hline
	0&I_r&0&0&0\\
	2\Ah_{21}&2\Ah_{22}&-I_s&2\Bh_2^p&2\Bh_2\\\hdl
	\Ch_1^p&\Ch_2^p&0&\Dh^p&\Eh\\\hdl
	\Ch_1&\Ch_2& 0&\Fh&0}
\mat{c}{x\\\hline w\\\hdl w_p\\\hdl u},\ \ w=\De_l(\De)z.
\end{aligned}
\end{equation}
Note that the augmented uncertainty channel $w\to z$ is square and of size $|l|\ti |l|$ for the partition $l:=(r,s)$.

\begin{defn}\label{liftedLFR}
	We abbreviate (\ref{eq5}) by $G_l(\De)$ and call it \textit{lifted LFR}, while we refer to $\De_l(\De)$ from (\ref{liftedblock}) as the \textit{lifted block}. Further, we say that $G_l(\De)\in\Gc^l_1$ if $G(\De)\in\Gc_1$.
\end{defn}

By defining  $w_c\,{:=}\,z_c\,{:=}\,\col(\hat{w}_c,\hat{z}_c)$, the controller (\ref{e2}) is lifted accordingly to
\begin{equation}\label{e2lift}
\begin{aligned}
\mat{c}{\dot{x}_c\\\hline z_{c}\\\hdl u}&=
\mat{c|cc;{2pt/1pt}c}{\Ah_{11}^c&\Ah_{12}^c&0&\Bh_1^c\\\hline 0&I_{r^c}&0&0\\ 2\Ah_{21}^c&2\Ah_{22}^c&-I_{s^c}&2\Bh_2^c\\\hdl \Ch_1^c&\Ch_2^c& 0&\hat{D}^c}
\mat{c}{x_c\\\hline w_c\\\hdl y},\\
w_{c}&=\De_c(\De)z_c:=\mat{cc}{-I_{r^c}&2\hat{\De}_c(\De)\\0&I_{s^c}}z_c
\end{aligned}
\end{equation}
with a square lifted scheduling block $\De_c(.)$ of dimension $|l^c|\ti|l^c|$ for the partition $l^c:=(r^c,s^c)$.
Interconnecting \eqref{eq5} and \eqref{e2lift} gives rise to the closed-loop LFR
\begin{equation}\label{eq3lifted}
\mat{c}{\dot{x}_{e}\\ z_e\\ z_p}=\mat{ccc}{{\Ac}_{11}&{\Ac}_{12}&{\Bc}_{1}\\ {\Ac}_{21}&{\Ac}_{22}&{\Bc}_{2}\\{\Cc}_{1}&{\Cc}_{2}&{\Dc}}\mat{c}{x_{e}\\ w_e\\ w_p}
\end{equation}
with $z_e:=\col(z,z_c)$, $w_e:=\col(w,w_c)$ and
\begin{equation}\label{dell}
w_e=\De_{e}(\De)z_e:=\diag(\De_l(\De),\De_c(\De))z_e.
\end{equation}
This leads to a key link between the initial and lifted setting.
\begin{lemm}[Lifting Lemma]\label{theo:untolift}
There exists a permutation matrix $\Pi$ of size $|l|+|l^c|$ such that
$\Xc_1$, $Z$, $\hat{\Pc}\in\mathbf{\hat{P}}$ fulfill
\eqref{eqan1},
\begin{equation}\label{ek1}
	\sy\hat{\Pc}\mat{c}{I_{(r,r^c)}\\0}\cl0,\qquad\sy\hat{\Pc}\mat{c}{0\\I_{(s,s^c)}}\cg0
\end{equation}
for the initial closed-loop interconnection \eqref{e1}-\eqref{eq3} if and only if
$\Xc_1$, $Z$, $\Pc:=\frac{1}{2}\Pi^T\hat{\Pc}\Pi$ satisfy
	\begin{equation}\label{eqan2}
	\begin{aligned}
	\sy\mat{cc;{2pt/1pt}cc;{2pt/1pt}c}{0&\Xc_1&0&0&0\\\Xc_1&0&0&0&0\\\hdl 0&0&0&\Pc&0\\0&0&\Pc&0&0\\\hdl 0&0&0&0&-\ga I}\mat{ccc}{I_{(n,n^c)}&0&0\\\Ac_{11}&\Ac_{12}&\Bc_{1}\\\hdl 0&I_{(l,l^c)}&0\\\Ac_{21}&\Ac_{22}&\Bc_{2}\\\hdl 0&0&I}&\cl0,\\
	\sy\mat{c;{2pt/1pt}cc;{2pt/1pt}c}{-\Xc_1&0&0&0\\\hdl 0&0&\Pc&0\\0&\Pc&0&0\\\hdl 0&0&0&Z^{-1}}\mat{cc}{I_{(n,n^c)}&0\\\hdl 0&I_{(l,l^c)}\\\Ac_{21}&\Ac_{22}\\\hdl \Cc_{1}&\Cc_{2}}&\cl0
	\end{aligned}
	\end{equation}
and
\begin{equation}\label{ek2}
	\sy \mat{cc}{0&\Pc\\\Pc&0}\mat{cc}{\De_{e}(\De)\\I}\cg0\quad
	\text{for all}\quad \De\in\mathbf{V}
\end{equation}
for the lifted closed-loop interconnection \eqref{eq5}-\eqref{eq3lifted}.
\end{lemm}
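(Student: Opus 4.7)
My plan is to construct an explicit invertible congruence that maps the lifted data $(\Pc,\De_e)$ back to the original data $(\hat{\Pc},\hat{\De}_e)$. The ingredients are a block permutation $\Pi$ of size $|l|+|l^c|$ that converts the partition $(r,s,r^c,s^c)$ of $\Pc$ into the partition $(r,r^c,s,s^c)$ of $\hat{\Pc}$, together with a $\De$-dependent unit-upper-triangular transformation $T(\De)=\diag(T_l(\De),T_c(\De))$ where $T_l(\De)=\smat{I_r&\hat{\De}(\De)\\0&I_s}$ and $T_c(\De)=\smat{I_{r^c}&\hat{\De}_c(\De)\\0&I_{s^c}}$. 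The guiding identity is the similarity $T_l(\De)^{-1}\De_l(\De)T_l(\De)=\diag(-I_r,I_s)$, verifiable by direct computation, which identifies $T_l$ as the coordinate change that exactly undoes the lifting step. The factor $\tfrac{1}{2}$ in $\Pc=\tfrac{1}{2}\Pi^T\hat{\Pc}\Pi$ emerges naturally from matching the anti-diagonal multiplier $\smat{0&\Pc\\\Pc&0}$ to $\hat{\Pc}$.

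First I would prove the multiplier equivalence in isolation. A blockwise direct computation shows that the off-diagonal entries of $T_l^T\He(\Pc_{11}\De_l)T_l$ cancel by design, leaving the block-diagonal form $2\diag(-P_{rr},\sy\Pc_{11}\smat{\hat{\De}\\I})$; an analogous computation for the off-diagonal block $\Pc_{12}\De_c+\De_l^T\Pc_{12}$, sandwiched by $T_l^T(\cdot)T_c$, yields $2\diag(-P_{rr^c},\smat{\hat{\De}\\I}^T\Pc_{12}\smat{\hat{\De}_c\\I})$. Reassembling the four blocks and interleaving rows/columns via $\Pi$ gives $T^T\He(\Pc\De_e)T$ in the form $\diag(-\hat{Q},\sy\hat{\Pc}\smat{\hat{\De}_e\\I})$, where $\hat{Q}$ is the NW $((r,r^c)\!\times\!(r,r^c))$ block of $\hat{\Pc}$; the prefactor $2$ is absorbed into $\Pc=\tfrac{1}{2}\Pi^T\hat{\Pc}\Pi$. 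Since $T(\De)$ and $\Pi$ are nonsingular for every $\De\in\mathbf{V}$, \eqref{ek2} is equivalent to $\hat{Q}\prec0$ together with $\hat{\Pc}\in\mathbf{\hat{P}}$. The remaining half $\hat{R}\succ0$ of \eqref{ek1} is preserved in both directions as it is invariant under the permutation congruence $\hat{\Pc}\leftrightarrow\Pc$; in the reverse direction it is also extractable from $\sy\hat{\Pc}\smat{\hat{\De}_e(0)\\I}\succ0$ using $0\in\mathbf{V}$ and $\hat{\De}(0)=0$.

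Next, I would apply the same congruence globally to the lifted LMIs \eqref{eqan2}. The lifted descriptions \eqref{eq5} and \eqref{e2lift} are built by rewriting $\hat{w}=\hat{\De}\hat{z}$ as $\hat{w}=-\hat{w}+2\hat{\De}\hat{z}$, that is, by prepending a square identity channel on top of the original dynamics. The lifted closed-loop selectors $(\Ac_{21},\Ac_{22},\Bc_2)$ and $(\Cc_1,\Cc_2)$ accordingly split into an identity row (the lifting artifact) and a row proportional to the originals $(\hat{\Ac}_{21},\hat{\Ac}_{22},\hat{\Bc}_2)$ and $(\hat{\Cc}_1,\hat{\Cc}_2)$. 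Inserting the congruence $\diag(I,\Pi T,\Pi T,I)$ and its analogue for the second LMI simultaneously (a) reshapes $\smat{0&\Pc\\\Pc&0}$ back into $\hat{\Pc}$ via the same algebraic identity used for the multiplier equivalence, (b) annihilates the identity-channel rows of the lifted selectors, exposing the original closed-loop matrices, and (c) leaves $\smat{0&\Xc_1\\\Xc_1&0}$, $-\ga I$ and $Z^{-1}$ untouched. The outcome is exactly \eqref{eqan1}. Chaining the two equivalences then yields the iff.

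The main obstacle will be this last step: verifying that the identity-channel rows of the lifted selectors indeed drop out under the global congruence. This is a direct but tedious consequence of the similarity $T_l^{-1}\De_l T_l=\diag(-I,I)$, which makes $T$ the exact reverse of the lifting step, but the required alignment between the anti-diagonal multiplier, the permutation $\Pi$, and the four block rows of the outer matrix in \eqref{eqan2} must be tracked with care. Once this mechanical check is dispatched, both directions of the iff follow at once because every transformation involved is invertible.
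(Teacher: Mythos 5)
Your treatment of the multiplier equivalence is essentially the paper's own argument: the congruence with $\smat{I_{(r,r^c)}&\hat{\De}_{e}(\De)\\0&I_{(s,s^c)}}$-type factors plus the permutation $\Pi$ turns $\He[\Pc\De_e(\De)]\cg0$ into $\diag\bigl(-\sy\hat{\Pc}\smat{I_{(r,r^c)}\\0},\,\sy\hat{\Pc}\smat{\hat{\De}_e(\De)\\I_{(s,s^c)}}\bigr)\cg0$, which correctly identifies \eqref{ek2} with the first inequality of \eqref{ek1} together with $\hat{\Pc}\in\mathbf{\hat{P}}$. That part is sound.

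The gap is in the LMI step. First, no congruence can turn \eqref{eqan2} into "exactly \eqref{eqan1}": the outer factors have different column counts, since the lifted uncertainty input $w_e$ has dimension $|l|+|l^c|=(r+s)+(r^c+s^c)$ whereas $\hat w_e$ has dimension $r+r^c$. The correct statement — and the one the lemma actually asserts — is that \eqref{eqan2} is equivalent to \eqref{eqan1} \emph{together with} the second inequality of \eqref{ek1}. Concretely, substituting $b\mapsto \hat z+\beta$ (and its controller analogue) for the redundant half of $w_e$, where $\hat z=\hat\Ac_{21}x_e+\hat\Ac_{22}\hat w_e+\hat\Bc_2 w_p$, decomposes the lifted quadratic form as the original one in $(x_e,\hat w_e,w_p)$ minus $\beta'^{\,T}\bigl[\sy\hat{\Pc}\smat{0\\I_{(s,s^c)}}\bigr]\beta'$ in the independent slack variable $\beta'$; the second half of \eqref{ek1} is precisely the definiteness of that leftover block, and it is needed for (and delivered by) \eqref{eqan2}, not by \eqref{ek2}. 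Second, the transformation you propose for this step, $\diag(I,\Pi T,\Pi T,I)$ with $T=T(\De)$, cannot be right: \eqref{eqan2} is $\De$-independent, and the correct column operation is the unipotent substitution above, built from $\hat\Ac_{2j},\hat\Bc_2$ rather than from $\hat{\De}(\De)$. Third, your fallback for recovering $\sy\hat{\Pc}\smat{0\\I_{(s,s^c)}}\cg0$ from \eqref{ek2} at $\De=0$ fails: only $\hat{\De}(0)=0$ is guaranteed, while the controller scheduling block $\hat{\De}_c(0)$ is in general nonzero (indeed the constructions in the synthesis proofs produce $\hat{\De}_c(0)\neq0$), so $\hat{\De}_e(0)\neq0$ and the evaluation at $\De=0$ does not isolate the $(s,s^c)$-block of $\hat{\Pc}$. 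Until the slack-variable decomposition of \eqref{eqan2} is carried out and the second inequality of \eqref{ek1} is located there, the "if and only if" is not established.
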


\begin{proof}
	Let the first inequality in \eqref{ek1} be true for $\hat{\Pc}\in\mathbf{\hat{P}}$.
If $\De\in\mathbf{V}$, we infer by the definition of $\mathbf{\hat{P}}$ the inequality
	\begin{multline*}
		\text{He}\left[\mat{cc}{-I_{(r,r^c)}&\hat{\De}_{e}(\De)\\0&I_{(s,s^c)}}^T(\tfrac{1}{2}\hat{\Pc})\mat{cc}{I_{(r,r^c)}&\hat{\De}_{e}(\De)\\0&I_{(s,s^c)}}\right]=\\
		=\mat{cc}{-\sy\hat{\Pc}\smat{I_{(r,r^c)}\\0}&0\\0&\sy\hat{\Pc}\smat{\hat{\De}_{e}(\De)\\I_{(s,s^c)}}}\cg0.
	\end{multline*}
By a congruence transformation with
$\smat{-I_{(r,r^c)}&\hat{\De}_{e}(\De)\\0&I_{(s,s^c)}}$ and if inserting $\hat{\De}_e(\De)$ from \eqref{dele}, the latter inequality is equivalent to
$$
0\cl\text{He}\left[(\tfrac{1}{2}\hat{\Pc})\mat{cc}{-I_{(r,r^c)}&\smat{2\hat{\De}(\De)&0\\0&2\hat{\De}_c(\De)}\\0&I_{(s,s^c)}}\right].
$$
If recalling \eqref{dell}, this can be equivalently expressed as
\begin{equation}\label{permatrix}
 0\,{\cl}\,\text{He}\left[\!(\tfrac{1}{2}\hat{\Pc})\Pi\De_{e}(\De)\Pi^T\right]\ \,\text{with}\ \,
\Pi\,{:=}\mat{cccc}{I_r&0&0&0\\0&0&I_{r^c}&0\\0&I_s&0&0\\0&0&0&I_{s^c}}
\end{equation}
being a permutation matrix. By a congruence transformation with $\Pi$, the inequality in \eqref{permatrix} transforms into \eqref{ek2} for the multiplier $\Pc=\frac{1}{2}\Pi^T\hat{\Pc}\Pi$.
The converse is shown by reversing the steps.
If $\Pc=\frac{1}{2}\Pi^T\hat{\Pc}\Pi$ holds, an analogous computation shows that (\ref{eqan1}) and the second inequality in (\ref{ek1}) are equivalent to (\ref{eqan2}), which is omitted for reasons of space.
\end{proof}

The lifting lemma can be interpreted as follows. For the subclass of scalings $\hat{\Pc}\in\mathbf{\hat{P}}$ with \eqref{ek1}, we can equivalently reformulate
the analysis conditions for the original interconnection \eqref{e1}-\eqref{eq3} to the ones based on
\eqref{eqan2} for the lifted interconnection \eqref{eq5}-\eqref{eq3lifted} and multipliers satisfying \eqref{ek2}. At the level of the lifted systems,
however, we are confronted with a highly structured controller (\ref{e2lift}), and it is unknown how  to formulate
convex conditions for synthesizing such controllers. As a further central step, we drop the structural constraint and synthesize, instead, an unstructured LPV controller - still with a square scheduling block - for the lifted system. This is the key toward convexification as exposed in detail next.

To this end, let us assume that the LPV controller is again described as in \eqref{e2}, but now with $r^c=s^c=:l^c$ as motivated above.
The interconnection of (\ref{e2}) with \eqref{eq5} leads, again, to \eqref{eq3lifted}
with $z_e:=\col(z,\zh_c)$, $w_e:=\col(w,\wh_c)$ and scheduled as
\begin{equation}\label{delc}
w_e=\De_{lc}(\De)z_e:=\diag(\De_l(\De),\hat{\De}_c(\De))z_e
\end{equation}
of dimension $(r,s,l^c)\times(r,s,l^c)$.
Let us now compactly express the closed-loop matrices as
\begin{multline}\label{clformula}
\mat{c;{2pt/1pt}c}{\Ac_{ij}&\Bc_{i}\\\hdl \Cc_{j}&\Dc}=\\=
\mat{cc;{2pt/1pt}c}{A_{ij}&0&B^{p}_i\\0&0&0\\\hdl C^{p}_j&0&D^{p}}+\mat{cc}{0&B_i\\I&0\\\hdl0&E}\mat{cc}{\Ah^c_{ij}&\Bh^c_i\\\Ch^c_j&\Dh^c}\mat{cc;{2pt/1pt}c}{0&I&0\\C_j&0&F}.
\end{multline}
As motivated by Lemma~\ref{theo:untolift}, we introduce the scaling class
\begin{equation}\label{lscalings}
\mathbf{P}:=\left\{\Pc\in\Sb^{(l,l^c)}\ \big|\
\sy \smat{0&\Pc\\\Pc&0}\smat{\De_{lc}(\De)\\I}\cg0\
\forall\De\in\mathbf{V} \right\}
\end{equation}
to obtain the following result.
\begin{thm}\label{theo:analysislh2}
Let there exists a controller $K(\De)\in\Kc_1$ with $l^c=r^c=s^c$, $\Xc_1\cg0$, $Z\cg0$ with $\tr(Z)<1$, and $\Pc\in\mathbf{P}$ such that the closed-loop system (\ref{eq3lifted}), \eqref{delc} obtained for the lifted system $G_l(\De)\in\Gc_1^l$ satisfies (\ref{eqan2}).
Then, for this choice of $K(\De)$, $\Xc_1$ and $Z$, there exists a suitable multiplier $\hat{\Pc}\in\mathbf{\hat{P}}$ such that the closed-loop system (\ref{eq3}) obtained for the original system $G(\De)\in\Gc_1$ fulfills the analysis inequalities (\ref{eqan1}).
\end{thm}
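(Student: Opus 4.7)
The plan is to construct $\hat{\Pc}$ explicitly from $\Pc$ through a linear embedding that connects the original and lifted uncertainty channels, and then to deduce $(\ref{eqan1})$ from $(\ref{eqan2})$ by substitution, exploiting the structural zero present in the lifted plant $(\ref{eq5})$.

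The key observation is that the lifted signals $w_e=\col(\hat w,\hat z,\hat w_c)$ and $z_e=\col(\hat w,\hat z,\hat z_c)$ are linear images of $X:=\col(\hat w_e,\hat z_e)$ through selection matrices $S_w,S_z$ with $w_e=S_w^T X$ and $z_e=S_z^T X$. Setting
\[
\hat{\Pc}\;:=\;S_w\,\Pc\,S_z^T\;+\;S_z\,\Pc\,S_w^T
\]
yields a symmetric $(r,r^c,s,s^c)$-partitioned matrix satisfying the identity $X^T\hat{\Pc}\,X=2w_e^T\Pc\,z_e$ for every $X$ and the induced $(w_e,z_e)$. To verify $\hat{\Pc}\in\mathbf{\hat{P}}$, fix $\hat z_e\neq0$ and set $\hat w_e:=\hat{\De}_e\hat z_e$; the induced $z_e$ is then nonzero and satisfies $w_e=\De_{lc}z_e$ (because $\De_l\col(\hat w,\hat z)=\col(\hat w,\hat z)$ whenever $\hat w=\hat{\De}\hat z$), so $X^T\hat{\Pc}\,X=z_e^T\He(\Pc\,\De_{lc})z_e>0$ by the hypothesis $\Pc\in\mathbf{P}$, which is exactly the constraint $\sy\hat{\Pc}\smat{\hat{\De}_e\\I_{(s,s^c)}}\cg0$ from $(\ref{posconstr})$.

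For the analysis LMIs, I would test the first LMI of $(\ref{eqan1})$ at an arbitrary triple $(x_e,\hat w_e,w_p)$, compute $\hat z_e=\hat{\Ac}_{21}x_e+\hat{\Ac}_{22}\hat w_e+\hat{\Bc}_2 w_p$ from the original closed-loop, and embed by $w_e:=S_w^T X$. Because the $(1,s)$-block of the lifted plant in $(\ref{eq5})$ is zero, the $\hat z$-columns of $\Ac_{12}$ and $\Cc_2$ vanish, so the state and performance expressions of $(\ref{eqan2})$ at this embedded point coincide with their counterparts in $(\ref{eqan1})$. A direct calculation exploiting the $2/(-I_s)$ structure of the lifted plant's uncertainty output rows (together with $\Dh=0$) shows that $\Ac_{21}x_e+\Ac_{22}w_e+\Bc_2 w_p=S_z^T X$ at the embedded $w_e$, so the multiplier term $2w_e^T\Pc\,z_e$ matches $X^T\hat{\Pc}\,X$ through the identity above. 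Hence each LMI of $(\ref{eqan1})$ at $(x_e,\hat w_e,w_p)$ evaluates to the corresponding LMI of $(\ref{eqan2})$ at $(x_e,w_e,w_p)$, which is strictly negative by hypothesis, and strict definiteness is preserved because $\|w_e\|^2=\|\hat w_e\|^2+\|\hat z\|^2\geq\|\hat w_e\|^2$.

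The main technical step will be the closed-loop identity $\Ac_{21}x_e+\Ac_{22}w_e+\Bc_2 w_p=S_z^T X$: choosing the embedded $\hat z$ to equal the original's uncertainty output $\Ah_{21}x+\Ah_{22}\hat w+\Bh_2^p w_p+\Bh_2 u$ turns the lifted plant's second uncertainty-row relation $2\Ah_{21}x+2\Ah_{22}\hat w-\hat z+2\Bh_2^p w_p+2\Bh_2 u=\hat z$ into a tautology by the design of $(\ref{eq5})$; the $\hat w$- and $\hat z_c$-blocks of $\Ac_{21}x_e+\Ac_{22}w_e+\Bc_2 w_p$ match $S_z^T X$ by direct inspection of $(\ref{eq5})$ and $(\ref{e2})$, using the agreement of the measurement $y$ between the lifted-embedded and original closed-loops.
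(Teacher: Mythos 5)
Your proposal is correct and takes essentially the same route as the paper: the matrix $\hat{\Pc}=S_w\Pc S_z^T+S_z\Pc S_w^T$ you define coincides exactly with the $\hat{\Pc}$ the paper reads off from the blocks $Q_{ij},R_i,S_i,T$ of $\Pc$ after its congruence manipulations, since the paper's left/right factors in \eqref{eqpr1} are precisely your selection maps composed with $\smat{\hat{\De}_{e}(\De)\\I}$. Your signal-substitution verification of the two LMIs (using the zero $\hat z$-columns of the lifted plant and the $2/(-I_s)$ rows to show the embedded point reproduces the original closed-loop responses) is a correct and welcome elaboration of the ``analogous arguments'' that the paper leaves implicit.
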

\begin{proof}
Fix $\De\in\mathbf{V}$. Then $\Pc\in\mathbf{P}$ implies $\He\left[\Pc\De_{lc}(\De)\right]=\sy \smat{0&\Pc\\\Pc&0}\smat{\De_{lc}(\De)\\I}\cg0$.
If recalling the structures of $\De_l(.)$, $\De_{lc}(.)$ in \eqref{liftedblock} and \eqref{delc},
left-multiplying $\smat{\hat{\De}(\De)^T&I_{s}&0\\0&0&I_{l^c}}$ and right-multiplying the transpose leads to
	\begin{equation}\label{eqpr1}
	\He\left[\mat{cc}{\hat{\De}(\De)&0\\I_{s}&0\\\hdl 0&I_{l^c}}^T\Pc
	\mat{cc}{\hat{\De}(\De)&0\\I_{s}&0\\\hdl 0&\hat{\De}_c(\De)}
	\right]\cg0.
	\end{equation}
	In accordance with (\ref{eqpr1}), let us partition $\Pc$ as
	\begin{equation}\label{eqpr2}
	\Pc=\mat{c;{2pt/1pt}c}{Q&R\\\hdl S&T}=\mat{cc;{2pt/1pt}c}{Q_{11}&Q_{12}&R_1\\Q_{21}&Q_{22}&R_2\\\hdl S_1&S_2&T}
	\end{equation}
	and the left/right factor in (\ref{eqpr1}) as $\renewcommand{\arraystretch}{1.0}\arraycolsep=1.0pt\scalebox{0.8}{$\mat{c}{A\\\hdl C}^T$}\big/\renewcommand{\arraystretch}{1.0}\arraycolsep=1.0pt\scalebox{0.8}{$\mat{c}{A\\\hdl B}$}$ to infer
	$$
	0\,{\cl}\,\He\left[\mat{c}{A\\\hdl C}^T\!\!\mat{c;{2pt/1pt}c}{Q&R\\\hdl S&T}\mat{c}{A\\\hdl B}\right]\,{=}\,\sy\He\mat{ccc}{Q&R&0\\0&0&0\\S&T&0}\!\mat{c}{A\\B\\C}.
	$$
	After a suitable permutation and using (\ref{eqpr2}), this reads as
	$$
	\sy\underbrace{\He\mat{cc|cc}{Q_{11}&R_1&Q_{12}&0\\0&0&0&0\\\hline Q_{21}&R_2&Q_{22}&0\\S_1&T&S_2&0}}_{=:\hat{\Pc}}\mat{cc}{\hat{\De}(\De)&0\\0&\hat{\De}_c(\De)\\\hline I_{s}&0\\0&I_{l^c}}\cg0.
	$$
Since $\Delta\in\mathbf{V}$ was arbitrary, we infer $\hat{\Pc}\in\mathbf{\hat{P}}$. Analogous arguments applied to
(\ref{eqan2}) lead to (\ref{eqan1}) for the very same $\hat{\Pc}$.
\end{proof}
As a consequence, designing a general unstructured controller for the lifted plant based on the analysis inequalities
\eqref{eqan2} and multipliers $\Pc\in\mathbf{P}$ does lead to a solution of Problem~\ref{problem1}
for the original plant. As the key benefit, the lifted inequalities \eqref{eqan2} do indeed depend on $\Pc$ in the desired structural fashion.

\subsection{$\Hz$-Synthesis: Unstructured plant and controller}\label{syn4}

Let us be given $G(\De)\in\Gc_1$ as in  (\ref{e1}). Following \cite{scherer2000}, the information about the uncertainty
$\hat{\De}(\De)=\De$ is captured by the class $\mathbf{P}_p$  of \textit{primal scalings} as
\begin{equation}\label{primal}
\begin{aligned}
\mathbf{P}_p:=\left\{\right.&P\in\Sb^{(r,s)}\ \big|\
\sy P\smat{I_{r}\\0}\cl0\ \text{and}\\
&\left.\hspace{8ex} \sy P\smat{\hat{\De}(\De)\\I_{s}}\cg0\
\text{for all}\ \De\in\mathbf{V} \right\}.
\end{aligned}
\end{equation}
As motivated by standard results on integral quadratic constraints based on graph separation \cite{gohsaf1995}, \cite{iwasaka1998}, we also make use of the class of so-called \textit{dual scalings}
\begin{equation}\label{dual}
\begin{aligned}
\mathbf{P}_d
:=\left\{\right.&\t{P}\in\Sb^{(r,s)}\ \big|\
\sy \t{P}\smat{0\\I_{s}}\cg0\ \text{and}\\
&\left.\hspace{5.4ex} \sy \t{P}\smat{I_{r}\\-\hat{\De}(\De)^T}\cl0\
\text{for all}\ \De\in\mathbf{V} \right\}.
\end{aligned}
\end{equation}
All this permits us to present the synthesis conditions in order to solve Problem~\ref{problem1}.
We start by introducing the relevant design variables.
These comprise the symmetric unknowns
\begin{equation}\label{v1}
X_1\in\Sb^{n},\qquad Y_1\in\Sb^{n}
\end{equation}
in relation to the Lyapunov matrix $\Xc_1$ in (\ref{eqan2}), while the multiplier $\Pc$ is related to some elements
\begin{equation}\label{v2}
X_2\in \mathbf{P}_p,\qquad Y_2\in\mathbf{P}_d
\end{equation}
of the primal/dual scaling class (\ref{primal})/(\ref{dual}). Further, we take
\begin{equation}\label{v3}
\mat{ccc}{K_{11}&K_{12}&L_1\\K_{21}&K_{22}&L_2\\M_1&M_2&N}\in\mat{lll}{\R^{n\ti n}&\R^{n\ti l}&\R^{n\ti k}\\\R^{l\ti n}&\R^{l\ti l}& \R^{l\ti k}\\\R^{m\ti n}&\R^{m\ti l}&\R^{m\ti k}}.
\end{equation}

\begin{thm}\label{theo:synthesis1}
	There exists a controller $K(\De)\in\Kc_1$ with $l^c=r^c=s^c$ and $\Xc_1\cg0$, $Z\cg0$ with $\tr(Z)<1$, $\Pc\in\mathbf{P}$ such that the closed-loop system (\ref{eq3lifted}), \eqref{delc} obtained for the lifted system $G_l(\De)\in\Gc_1^l$ satisfies (\ref{eqan2}) iff there exist
(\ref{v1})-(\ref{v3}),  $Z\cg0$ with $\tr(Z)<1$  fulfilling the synthesis inequalities $\Xhb\cg 0$ and
	\begin{equation}\label{eqan3}
	\begin{aligned}
	\sy\mat{cc;{2pt/1pt}cc;{2pt/1pt}c}{0&I&0&0&0\\I&0&0&0&0\\\hdl 0&0&0&I&0\\0&0&I&0&0\\\hdl 0&0&0&0&-\ga I}\mat{ccc}{I_{(n,n)}&0&0\\\Ahb_{11}&\Ahb_{12}&\Bhb_{1}\\\hdl 0&I_{(l,l)}&0\\\Ahb_{21}&\Ahb_{22}&\Bhb_{2}\\\hdl 0&0&I}&\cl0,\\
	\sy\mat{c;{2pt/1pt}cc;{2pt/1pt}c}{-\Xhb&0&0&0\\\hdl 0&0&I&0\\0&I&0&0\\\hdl 0&0&0&Z^{-1}}\mat{cc}{I_{(n,n)}&0\\\hdl 0&I_{(l,l)}\\\Ahb_{21}&\Ahb_{22}\\\hdl \Chb_{1}&\Chb_{2}}&\cl0.
	\end{aligned}
	\end{equation}
	Here $\Ahb_{ij},\,\Bhb_i,\,\Chb_j,\,\Xhb$ for $i,j=1,2$ depend affinely on the decision variables and are defined for the lifted plant (\ref{eq5}) as
	\begin{equation}\label{s22}
	\begin{aligned}
	&\mat{c;{2pt/1pt}c}{\Ahb_{ij}&\Bhb_i\\\hdl \Chb_j&\Dhb}
	:=\mat{cc;{2pt/1pt}c}{A_{ij}Y_j&A_{ij}&B_i^{p}\\0&X_i^TA_{ij}&X_i^TB_i^{p}\\\hdl C_j^{p}Y_j&C_j^{p}&D^{p}}+\\&\hspace{13.4ex}+
	\mat{cc}{0&B_i\\I&0\\\hdl 0&E}
	\mat{cc}{K_{ij}&L_i\\M_j&N}
	\mat{cc;{2pt/1pt}c}{I&0&0\\0&C_j&F},
	\end{aligned}
	\end{equation}
	\begin{equation}\label{s22c}
	\hspace{-16.5ex}\Xhb:=\mat{cc}{Y_1&I\\I&X_1};
	\end{equation}
	the block $\Dhb$ is given for reasons of space and used in Sec.~\ref{secsyn}.\\
	If (\ref{eqan3}) holds, one can construct $\Xc_1\cg0$, $\Pc\in\mathbf{P}$, and $K(\De)\in\Kc_1$ with an affine scheduling function $\hat{\De}_c(.)$ and with $|n^c|\leq |n|$, $|l^c|\leq |l|$ such that (\ref{eqan2}) is satisfied.
\end{thm}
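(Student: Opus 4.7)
The plan is the standard two-sided linearization scheme for output-feedback synthesis, now executed \emph{simultaneously} on the Lyapunov block and on the multiplier block. Both appear with the same anti-diagonal structure $\smat{0&\bullet\\\bullet&0}$ in (\ref{eqan2}) — this is precisely what the lifting of Sec.~\ref{seclifting} was designed to produce, and it is what makes a joint change of variables linear.

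\textbf{Necessity.} Given $\Xc_1\cg0$ and $\Pc\in\mathbf{P}$, partition conformably with $(n,n^c)$ and $(l,l^c)$:
$$
\Xc_1=\mat{cc}{X_1 & U\\ U^T & *},\quad \Xc_1^{-1}=\mat{cc}{Y_1 & V\\ V^T & *},\quad UV^T=I-X_1Y_1,
$$
and similarly for $\Pc,\Pc^{-1}$. Form the factor $T:=\smat{Y_1 & I\\ V^T & 0}$; one verifies $\Xc_1 T=\smat{I & X_1\\ 0 & U^T}$, hence $T^T\Xc_1 T=\Xhb\cg0$. Build the analogous factor $\Omega$ for $\Pc$. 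Since $\De_l$ and $\De_c$ are square, the upper-left $(r,s)$-block $X_2$ of $\Pc$ inherits the defining conditions of $\mathbf{P}_p$ by restricting $\sy\smat{0&\Pc\\\Pc&0}\smat{\De_{lc}(\De)\\ I}\cg0$ to the plant uncertainty channel; dually, the corresponding block $Y_2$ of (an appropriate sign-inverted) $\Pc^{-1}$ lives in $\mathbf{P}_d$. A joint congruence $\diag(T,T,\Omega,\Omega,I)$ on the first inequality of (\ref{eqan2}) and the analogue on the second, combined with the classical linearizing substitution
$$
\mat{cc}{K_{ij}&L_i\\ M_j&N}:=
\mat{cc}{U & X_1 B_i\\ 0 & I}\mat{cc}{\hat A^c_{ij}&\hat B^c_i\\\hat C^c_j&\hat D^c}\mat{cc}{V^T & 0\\ C_j Y_j & I} \;+\; (\text{data-only shift})
$$
to absorb the controller parameters, transforms the closed-loop blocks (\ref{clformula}) into (\ref{s22}) exactly, and turns (\ref{eqan2}) into (\ref{eqan3}).

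\textbf{Sufficiency.} Conversely, $\Xhb\cg0$ gives $I-X_1Y_1=UV^T$ via a rank factorization of minimal width $|n^c|\le|n|$, which through the classical Lyapunov completion reconstructs $\Xc_1\cg0$ with the prescribed $(n,n^c)$-partition. The pivotal step is the analogous multiplier completion: given $X_2\in\mathbf{P}_p$ and $Y_2\in\mathbf{P}_d$, synthesize a full-block scaling $\Pc\in\mathbf{P}$ for the lifted extended block $\De_{lc}(\De)$ together with an affine scheduling function $\hat\De_c:\mathbf{V}\to\R^{r^c\times s^c}$ of dimension $|l^c|\le|l|$. This is accomplished by the full-block completion/dualization lemma applied to the square lifted structure: it delivers $\Pc$ in the anti-diagonal embedding of (\ref{eqan2}) and, as a by-product, constructs $\hat\De_c(\De)$ affinely from $\De$ through a Schur-complement formula. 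With $U,V$ and $\Pc$ in hand, inverting the linearizing substitution extracts $\hat A^c_{ij},\hat B^c_i,\hat C^c_j,\hat D^c$, and reversing the earlier congruence reproduces (\ref{eqan2}).

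\textbf{Main obstacle.} The Lyapunov completion is classical. The substantive difficulty is the multiplier reconstruction: combining primal and dual scalings for the original uncertainty $\hat\De(\De)$ into a full-block scaling for the \emph{lifted extended} uncertainty $\De_{lc}(\De)$, while guaranteeing (a) the anti-diagonal embedding demanded by (\ref{eqan2}), (b) affinity and bounded dimension of $\hat\De_c(\De)$, and (c) the sign structure $\sy\Pc\smat{I\\0}\cl 0,\ \sy\Pc\smat{0\\I}\cg 0$ required by Lemma~\ref{theo:untolift} to translate back to the original problem. This is exactly where lifting pays off: the square form of $\De_l,\De_c$ is what makes the completion go through, turning a step that has no known convex solution at the original level into a solvable LMI construction at the lifted level.
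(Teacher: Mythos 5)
Your overall architecture coincides with the paper's: push $\Xc_1$ and $\Pc$ into the outer factors, factorize both simultaneously, and apply the standard linearizing substitution; the anti-diagonal blocks produced by lifting are indeed what makes the joint transformation affine. Your necessity argument (including the observation that restricting $\He[\Pc\De_{lc}(\De)]\cg0$ and $\He[\De_{lc}(\De)\Pc^{-1}]\cg0$ to the leading $|l|\ti|l|$ blocks yields $X_2\in\mathbf{P}_p$ and $Y_2\in\mathbf{P}_d$) is sound in outline. Two imprecisions there: the relevant block is the $(1,1)$ block of $\Pc^{-1}$ itself, with no sign inversion; and your substitution formula uses a single pair $U,V$ and the single block $X_1$, whereas the correct transformation must use the Lyapunov factors $U_1,V_1,X_1,Y_1$ for the index $1$ and the multiplier factors $U_2,V_2,X_2,Y_2$ for the index $2$, mixed as $U_i^T\hat A^c_{ij}V_j$, $X_i^TB_i$, $C_jY_j$ for $i,j\in\{1,2\}$ --- otherwise the blocks $\Ahb_{12},\Ahb_{21},\Ahb_{22},\Bhb_2,\Chb_2$ do not come out affine in the decision variables.

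The genuine gap is in sufficiency, at exactly the step you flag as pivotal. You delegate the reconstruction of $\Pc\in\mathbf{P}$ and of an affine $\hat{\De}_c(.)$ from $X_2\in\mathbf{P}_p$, $Y_2\in\mathbf{P}_d$ to ``the full-block completion/dualization lemma.'' No such off-the-shelf lemma applies here: the classical dualization lemma relates a fixed multiplier to its inverse, and the known multiplier-extension results for gain-scheduling operate at the level of the original (unlifted) LFR and do not produce the anti-diagonal embedding $\smat{0&\Pc\\\Pc&0}$ together with a scheduling function. The construction has to be carried out, and it is elementary once set up correctly: take $U_2:=I$ and $V_2:=I-X_2^TY_2$ (invertible after an arbitrarily small perturbation of $X_2,Y_2$ preserving all strict inequalities), define $\Xc_2:=\Zc_2\Yc_2^{-1}$ with $\Yc_2=\smat{Y_2&I\\V_2&0}$, $\Zc_2=\smat{I&X_2\\0&U_2}$, and choose $\hat{\De}_c(\De)$ through $U_2^T\hat{\De}_c(\De)V_2:=-\bigl(X_2^T\De_l(\De)Y_2+\De_l(\De)^T\bigr)$. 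This is not a Schur-complement formula; it is the cancellation of the off-diagonal coupling block in the congruence-transformed multiplier inequality, after which the remaining diagonal blocks $\He[\De_l(\De)Y_2]\cg0$ and $\He[X_2^T\De_l(\De)]\cg0$ are exactly the conditions $Y_2\in\mathbf{P}_d$ and $X_2\in\mathbf{P}_p$, so that $\Xc_2\in\mathbf{P}$ follows. Affinity of $\hat{\De}_c(.)$ and the bounds $|n^c|\leq|n|$, $|l^c|\leq|l|$ then follow by inspection. Without this explicit step your proof does not close.
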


The synthesis conditions (\ref{eqan3}) reflect the structure of the lifted analysis inequalities (\ref{eqan2}) and reduce to standard LMIs in all variables and $\gamma$ after applying the Schur complement.
Since $\mathbf{V}=\text{Co}\{\De_1,\ldots,\De_N\}$ is a compact polytope and $\hat\Delta(.)$ is affine, the multiplier classes $\mathbf{P}_p$ and $\mathbf{P}_d$ in \eqref{primal} and \eqref{dual} do admit LMI representations \cite{scherer2000}, i.e., the inequalities in \eqref{primal}, \eqref{dual} hold for all $\De\in\mathbf{V}$ iff they are true for all vertices $\De\in\{\De_1,\ldots,\De_N\}$.
Therefore, all the conditions in Theorem~\ref{theo:synthesis1} translate into a
standard finite-dimensional LMI optimization problem, which allows to directly minimize the bound $\gamma$.

The following constructive proof of Theorem~\ref{theo:synthesis1} displays the convexification mechanism by exploiting the anti-diagonal structures with $\Xc_1$ and $\Pc$ in the lifted analysis condition (\ref{eqan2}).

\begin{proof}
	\textit{Necessity.} Suppose that (\ref{eqan2}) holds for some $\Xc_1\cg0$ and some scaling $\Xc_2:=\Pc\in\mathbf{P}$. Due to lifting and the resulting anti-diagonal structure, we can just push $\Xc_1$ and $\Xc_2$ in (\ref{eqan2})
 to the outer factors as
	\begin{equation}\label{pr1}
	\begin{aligned}
	\sy\mat{cc;{2pt/1pt}cc;{2pt/1pt}c}{0&I&0&0&0\\I&0&0&0&0\\\hdl 0&0&0&I&0\\0&0&I&0&0\\\hdl 0&0&0&0&-\ga I}\mat{ccc}{I_{(n,n^c)}&0&0\\\Xc_1\Ac_{11}&\Xc_1\Ac_{12}&\Xc_1\Bc_{1}\\\hdl 0&I_{(l,l^c)}&0\\\Xc_2\Ac_{21}&\Xc_2\Ac_{22}&\Xc_2\Bc_{2}\\\hdl 0&0&I}&\cl0,\\
	\sy\mat{c;{2pt/1pt}cc;{2pt/1pt}c}{-\Xc_1&0&0&0\\\hdl 0&0&I&0\\0&I&0&0\\\hdl 0&0&0&Z^{-1}}\mat{cc}{I_{(n,n^c)}&0\\\hdl 0&I_{(l,l^c)}\\\Xc_2\Ac_{21}&\Xc_2\Ac_{22}\\\hdl \Cc_{1}&\Cc_{2}}&\cl0.
	\end{aligned}
	\end{equation}
Then the synthesis inequalities (\ref{eqan3}) are obtained as follows.

	\textbf{Step 1} (\textit{Factorization of Lyapunov matrix $\Xc_1$}).\\
	W.l.o.g. let $n^c\geq |n|$. As shown in \cite{scherer1997}, this assumption allows to factorize $\Xc_1$ of dimension $|n|+|n^c|$ according to
	\begin{equation}\label{fac}
	\Xc_i\Yc_i=\Zc_i\ \ \text{with}\ \ \Yc_i:=\mat{cc}{Y_i&I\\\hdl V_i&0},\ \Zc_i:=\mat{cc}{I&X_i\\\hdl 0&U_i}
	\end{equation}
	for $i=1$ with the following properties: $\Yc_1$ has full column rank, and $X_1$, $Y_1$ satisfy \eqref{v1}.
	We infer $0\cl \Yc_1^T\Xc_1\Yc_1= \Zc_1^T\Yc_1$, which reads as $\Xhb\cg0$ for (\ref{s22c}) by symmetry.

	\textbf{Step 2} (\textit{Factorization of scaling matrix $\Xc_2$}).\\
	We target to factorize $\Xc_2$ of dimension $|l|+|l^c|$ as in (\ref{fac}) for $i=2$. For this purpose, we assume w.l.o.g. that $l^c\geq |l|$ and introduce the partition
	$
	\Xc_2=\smat{X_2&U_2^T\\U_2&Z_2}\in\Sb^{(l,l^c)}.
	$
	By perturbation, we can successively achieve invertibility of $Z_2$, a full column rank of $H_2:=-Z_2^{-1}U_2$ and invertibility of $X_2-U_2^TZ_2^{-1}U_2$, while (\ref{pr1}) persists to hold. Detailed arguments are given in the proof of the more general Theorem~\ref{theo:synthesis4}.
Then $\Xc_2$ is invertible and
	$
	\smat{Y_2\\V_2}:=\Xc_2^{-1}\smat{I_{l}\\0}
	$
	leads to (\ref{fac}) for $i=2$. By the block-inversion formula we infer $V_2=H_2Y_2$, which implies that $\Yc_2$ has full column rank.

	\textbf{Step 3} (\textit{Elimination of scheduling function $\hat{\De}_c(.)$}).\\
	Let us now show that (\ref{v2}) follows from eliminating the scheduling function $\hat{\De}_c(.)$ in $\De_{lc}(.)$.
 For reasons of space, we omit the arguments of the functions $\De_l(.)$, $\hat{\De}_c(.)$, $\De_{lc}(.)$.
 Since $\Xc_2\in\mathbf{P}$, an additive decomposition of $\De_{lc}$ leads to
	\begin{equation}\label{X2Dlc}
	\begin{aligned}
		0&\cl\sy \mat{cc}{0&\Xc_2\\\Xc_2&0}\mat{c}{\De_{lc}\\I}=\He\left[\Xc_2\De_{lc}\right]=\\
		&\hspace{10ex}=\He\left[\Xc_2\mat{cc}{\De_l&0\\0&0}\right]+\He\left[\Xc_2\mat{cc}{0&0\\0&\hat{\De}_c}\right].
	\end{aligned}
	\end{equation}
	By performing a congruence transformation with $\Yc_2$ and applying the factorization (\ref{fac}) for $i=2$, we get
	\begin{equation}\label{exdec}
		0\cl\He\left[\mat{c}{I\\\hdl X_2^T}\De_l\mat{c;{2pt/1pt}c}{Y_2&I}\right]+\He\left[\mat{c}{0\\\hdl U_2^T}\hat{\De}_c\mat{c;{2pt/1pt}c}{V_2&0}\right].
	\end{equation}
The diagonal blocks read as
\begin{equation}\label{diagb}
\He\left[\De_l Y_2\right]\cg0\te{and}\He\left[X_2^T\De_l\right]\cg0.
\end{equation}
Congruence transformations with $\smat{I_{r}&0\\-\hat{\De}^T&I_{s}}$ and $\smat{I_{r}&\hat{\De}\\0&I_{s}}$, respectively, imply $Y_{2}\in \mathbf{P}_d$ and $X_{2}\in\mathbf{P}_p$ and we get (\ref{v2}).

	\textbf{Step 4} (\textit{Convexifying parameter transformation}).\\
	Since $\Yc_i$ has full column rank for $i=1,2$, we can apply a congruence transformation with $\Yc_i/\Yc_i^T$ on the $i$-th block-column/-row of (\ref{pr1}) to infer with (\ref{fac}) that
	\begin{equation}\label{pr2}
	\begin{aligned}
		\sy\mat{cc;{2pt/1pt}cc;{2pt/1pt}c}{0&I&0&0&0\\I&0&0&0&0\\\hdl 0&0&0&I&0\\0&0&I&0&0\\\hdl 0&0&0&0&-\ga I}\mat{ccc}{I_{(n,n)}&0&0\\\Zc_1^T\Ac_{11}\Yc_1&\Zc_1^T\Ac_{12}\Yc_2&\Zc_1^T\Bc_{1}\\\hdl 0&I_{(l,l)}&0\\\Zc_2^T\Ac_{21}\Yc_1&\Zc_2^T\Ac_{22}\Yc_2&\Zc_2^T\Bc_{2}\\\hdl 0&0&I}&\cl0,\\
		\sy\mat{c;{2pt/1pt}cc;{2pt/1pt}c}{-\Xhb&0&0&0\\\hdl 0&0&I&0\\0&I&0&0\\\hdl 0&0&0&Z^{-1}}\mat{cc}{I_{(n,n)}&0\\\hdl 0&I_{(l,l)}\\\Zc_2^T\Ac_{21}\Yc_1&\Zc_2^T\Ac_{22}\Yc_2\\\hdl \Cc_{1}\Yc_1&\Cc_{2}\Yc_2}&\cl0.
	\end{aligned}
	\end{equation}
	By using the closed-loop formula (\ref{clformula}), we infer
	\begin{equation}\label{s19}
	\begin{aligned}
	\mat{c;{2pt/1pt}c}{\Zc^T_i\Ac_{ij}\Yc_j&\Zc_i^T\Bc_i\\\hdl \Cc_j\Yc_j&\Dc}
	&=\mat{cc;{2pt/1pt}c}{A_{ij}Y_j&A_{ij}&B_i^p\\0&X_i^TA_{ij}&X_i^TB_i^p\\\hdl C_j^pY_j&C_j^p&D^p}+\\&\hspace{-2ex}+
	\mat{cc}{0&B_i\\I&0\\\hdl 0&E}
	\mat{cc}{K_{ij}&L_i\\M_j&N}
	\mat{cc;{2pt/1pt}c}{I&0&0\\0&C_j&F}
	\end{aligned}
	\end{equation}
	for $i,j=1,2$ and with the substitution
	\begin{equation}\label{s2}
		\begin{aligned}
		&\hspace{-2ex}\mat{c;{2pt/1pt}c}{K_{ij}&L_i\\\hdl M_j&N}:=
		\mat{c;{2pt/1pt}c}{X_i^TA_{ij}Y_j&0\\\hdl 0&0}+\\
		&\hspace{4ex}+
		\mat{cc}{I&X_i^TB_i\\\hdl 0&I}
		\mat{cc}{U_i^T\hat{A}_{ij}^cV_j&U_i^T\hat{B}^c_i\\ \hat{C}^c_jV_j&\hat{D}^c}
		\mat{c;{2pt/1pt}c}{I&0\\ C_jY_j&I}.
		\end{aligned}
	\end{equation}
	Hence, (\ref{eqan3}) is true which finishes the proof of necessity.
	
	\textit{Sufficiency.} Suppose that (\ref{eqan3}) is satisfied with suitable decision variables. Then $V_1:=I-X_1^TY_1$ is invertible due to $\Xhb\cg0$, and, by perturbation of $X_2,\,Y_2$ if required, the same can be achieved for $V_2:=I-X_2^TY_2$. Let us now set $U_i:=I$ for $i=1,2$ to define invertible $\Yc_i,\,\Zc_i$ as in (\ref{fac}). Our choice ensures symmetry of $\Yc_i^T\Zc_i=\smat{Y_i^T&Y_i^TX_i+V_i^TU_i\\I&X_i}$ since $Y_i^TX_i+V_i^TU_i=I$. After a congruence transformation with $\Yc_i^{-1}$, this implies symmetry of $\Xc_i:=\Zc_i\Yc_i^{-1}$. Further, a similar argument shows $\Xc_1\cg0$.

	By inverting the congruence transformations at the end of Step 3, $Y_{2}\in \mathbf{P}_d$ and $X_{2}\in\mathbf{P}_p$ imply
	\eqref{diagb}.
Since $U_2$ and $V_2$ are invertible, we can render (\ref{exdec}) satisfied by taking $\hat{\De}_c$ with
\begin{equation}\label{scheduling}
X_2^T\De_lY_2+\De_l^T+U_2^T\hat{\De}_cV_2=0.
\end{equation}
This implies
\eqref{X2Dlc} and, hence, $\Xc_2\in\mathbf{P}$.
If solving \eqref{s2} for the controller matrices, we infer the validity of \eqref{pr2}.
Since $\Yc_1,\Yc_2$ are square and invertible, this ensures (\ref{pr1}), and thus (\ref{eqan2}).
\end{proof}

It is reassuring to extract that, after lifting, the extension of the known convexifying controller parameter transformation for nominal synthesis \cite{masubuchi1998}, \cite{scherer1997} turns out to be successful for gain-scheduling.
As a particular feature, the coupling between
the dynamics and the scheduling blocks causes no obstruction in this approach.
Moreover, as shown in the sufficiency proof, we can invert \eqref{s2} and \eqref{scheduling} to build the controller matrices and the scheduling function; the construction of the scheduling function turns out to be easier if compared to \cite{scherer2000}.
We stress again that, without lifting, such
a strategy miserably fails, and that no other gain-scheduling design technique without elimination and for full multipliers is available in the literature.

As a central next contribution of this work, we now show how the proposed lifting procedure is an essential enabler to design structured gain-scheduling for nested LPV systems.

\section{Gain-scheduling for structured interconnections}\label{secvi}

Let us now turn to the nested interconnections for partial and triangular gain-scheduling based on Fig.~\ref{fig0} and as described in the introduction. Motivated by Sec.~\ref{sech2}, we show how to embed these configurations into Fig.~\ref{figs}, only by introducing suitable structural requirements on the plant/controller LFR in (\ref{e1})/(\ref{e2}). For this purpose, let the uncertainty class $\mathbf{\De}$ with associated value set $\mathbf{V}$ be defined as in Sec.~\ref{sech2} such that some element $\De\in\mathbf{\De}$ takes values in $\mathbf{V}\subset\R^{r_\De\ti s_\De}$. Moreover,
we denote by $\RP$ the set of proper transfer matrices, and introduce the following compact notation for structured matrices.
\begin{defn}\label{struct_not}
	If some $2$-tuple $a=(a_1,a_2)\in\N_0^2$ has an zero component, the notational convention $(a_1,0)=a_1$ and $(0,a_2)=a_2$ is used in the sequel. For $a,\,b\in\N_0^2$,
	$$
	\T^{a\ti b}:=\left\{\smat{J_{11}&0\\J_{21}&J_{22}}\in\R^{a\ti b}\,\Big|\,J_{ij}\in\R^{a_i\ti b_j}\right\}
	$$
then denotes the set of lower triangular $2\ti2$ block-matrices
in a partition corresponding to $a$ and $b$, including the following natural versions for degenerate cases:
	$$
	\begin{aligned}
	\Tr^{a_1\ti b}&:=\T^{(a_1,0)\ti b},&\qquad\Tc^{a\ti b_2}&:=\T^{a\ti(0,b_2)},\\
	\Trf^{a_2\ti b}&:=\T^{(0,a_2)\ti b},&\qquad\Tcf^{a\ti b_1}&:=\T^{a\ti(b_1,0)}.\\[1ex]
	\end{aligned}
	$$
\end{defn}
\phantom{\vspace{1ex}}
\begin{exam} With Definition~\ref{struct_not} we have
	$$
	\mat{ccc;{2pt/1pt}cc;{2pt/1pt}c}{1&1&0&0&0&3\\0&1&1&2&2&3\\\hdl 4&4&0&5&5&0}
	\in\mat{lll}{\T^{(1,1)\ti (2,1)}&\Tc^{(1,1)\ti 2}&\Tcf^{(1,1)\ti 1}\\[1ex]\Tr^{1\ti (2,1)}&\R^{1\ti 2}&0}.
	$$
\end{exam}

\subsection{Classes $\Gc_2$, $\Kc_2$: Partial gain-scheduling}\label{classes1}
We consider Fig.~\ref{fig0} for partial gain-scheduling with the LPV system $P_1(\De)$ being described as the LFR
\begin{equation}\label{contr1}
\mat{c}{\hat{z}\\\xi\\y_1}=\mat{cc}{P_1^{11}&P_1^{12}\\P_1^{21}&P_1^{22}\\P_1^{31}&P_1^{32}}\mat{c}{\hat{w}\\u_1},\quad \hat{w}=\De\hat{z}
\end{equation}
where $u_1\to y_1$ is of size $k_1\ti m_1$ and $P_1^{ij}$ are LTI operators identified with their transfer matrices, i.e., $P_1^{ij}\in\RP$. If we partition $P_2\in\RP$ with $u_2\to y_2$ of size $k_2\ti m_2$ such that
$y_2=\smat{P_2^{11}&P_2^{12}}\smat{u_2\\\xi}$, the interconnection with (\ref{contr1}) yields
\begin{equation}\label{t1}
\mat{c}{\hat{z}\\y_1\\\hgl y_2}
\!=\!\mat{ccc}{P_1^{11}&P_1^{12}&0\\P_1^{31}&P_1^{32}&0\\\hgl P_2^{12}P_1^{21}&P_2^{12}P_1^{22}&P_2^{11}}\mat{c}{\hat{w}\\u_1\\ u_2}
,\ \hat{w}=\De\hat{z}.
\end{equation}
The lower-triangular structure results from the one-sided signal flow for $\xi$ in Fig.~\ref{fig0}. For well-posedness of the controller feedback loop, let us assume that $P_1^{22},\,P_1^{32},\,P_2^{11}$ are strictly proper.
By proceeding in a row-by-row fashion for the transfer matrix in (\ref{t1}), the latter assumption permits us to construct
state-space descriptions with the structure
$$
\mat{c}{\dot{x}_1\\\hline\hat{z}\\ y_1}=\matl({c|ccc}{\bu&\bu&\bu&0\\\hline \bu&\bu&\bu&0\\\bu&\bu&0&0})\mat{c}{x_1\\\hline \hat{w}\\ u_1\\ u_2}
,\ \
\mat{c}{\dot{x}_2\\\hline y_2}=\matl({c|ccc}{\bu&\bu&\bu&\bu\\\hline \bu&\bu&0&0})
\mat{c}{x_2\\\hline \hat{w}\\ u_1\\ u_2}
$$
and of McMillan degree $n_1$ and $n_2$, respectively. This leads to an overall state-space description of (\ref{t1}) with the structure
\begin{equation}\label{eqs1}
\mat{c}{\dot{x}_1\\\dot{x}_2\\\hline \hat{z}\\\hdl y_1\\ y_2}=
\mat{cc|c;{2pt/1pt}cc}{\bu&0&\bu&\bu&0\\0&\bu&\bu&\bu&\bu\\\hline
	\bu&0&\bu&\bu&0\\\hdl\bu&0&\bu&0&0\\ 0&\bu&\bu&0&0}
\mat{c}{x_1\\x_2\\\hline \hat{w}\\\hdl u_1\\ u_2},\quad \hat{w}=\De\hat{z}.
\end{equation}
Then, \eqref{eqs1} subsumes to the setting of Fig.~\ref{figs} after appending some performance channel $w_p\to z_p$ to \eqref{eqs1}, and if restricting the structure of the plant LFR \eqref{e1} for $G(\De)$ to
\begin{equation}\label{clp2}
\begin{aligned}
&\mat{cccc}{
	\Ah_{11}&\Ah_{12}&\Bh_1^p&\Bh_1\\
	\Ah_{21}&\Ah_{22}&\Bh_2^p&\Bh_2\\
	\Ch_1^p&\Ch_2^p&\Dh^p&\Eh\\
	\Ch_1&\Ch_2& \Fh&\Dh}
\in\mat{llll}{\T^{n\ti n}&\Tcf^{n\ti r}&\Tcf^{n\ti m^p}&\T^{n\ti m}\\\Tr^{s\ti n}&\R^{s\ti r}&\R^{s\ti m^p}& \Tr^{s\ti m}\\\Trf^{k^p\ti n}&\R^{k^p\ti r}&\R^{k^p\ti m^p}&\Trf^{k^p\ti m}\\\T^{k\ti n}&\Tcf^{k\ti r}&\Tcf^{k\ti m^p}&0}\\
&\text{and}\ \ \hat{\De}:\mathbf{V}\to\R^{r\times s},\ \hat{\De}(\De):=\De.
\end{aligned}
\end{equation}
Here we use $x:=\col(x_1,x_2)$, $u:=\col(u_1,u_2)$, $y:=\col(y_1,y_2)$ resulting in
\begin{equation}\label{dim}
n=\mat{cc}{n_1,n_2},\ \ m=\mat{cc}{m_1,m_2},\ \ k=\mat{cc}{k_1,k_2},
\end{equation}
and we view the block-diagonal sub-matrices in (\ref{eqs1}) as being lower triangular, while the definition of
$\hat{\De}(.)$ leads to the choice $r:=r_\De$, $s:=s_\De$.
Note that the performance channel $w_p\to z_p$ is determined by the underlying control problem in practice. Since this channel is not required to carry a particular structure in our approach, we can consider the associated matrices  $\Bh_i^p,\,\Ch_j^p,\,\Dh^p,\,\Eh,\,\Fh$ in \eqref{clp2} as being unstructured.

If translating the nested loop of $C_1(\De)$ and $C_2$ in Fig.~\ref{fig0} to the one in Fig.~\ref{figs} in an analogous fashion, we obtain the LFR for $K(\De)$ in (\ref{e2}), but now with the structural requirements
\begin{equation}\label{clk2}
\begin{aligned}
&\mat{ccc}{\Ah^c_{11}&\Ah^c_{12}&\Bh^c_1\\\Ah^c_{21}&\Ah^c_{22}&\Bh^c_2\\\Ch^c_1&\Ch^c_2&\Dh^c}\in\mat{ccc}{\T^{n^c\ti n^c}&\Tcf^{n^c\ti r^c}&\T^{n^c\ti k}\\\Tr^{s^c\ti n^c}&\R^{s^c\ti r^c}& \Tr^{s^c\ti k}\\\T^{m\ti n^c}&\Tcf^{m\ti r^c}&\T^{m\ti k}}\\
&\text{and}\ \ \hat{\De}_c:\mathbf{V}\to\R^{r^c\ti s^c},\ \De\mapsto\hat{\De}_c(\De)
\end{aligned}
\end{equation}
for the to-be-designed matrices with suitable block partitions $n^c\in\N_0^2$ and $r^c,\,s^c\in\N_0$ and some scheduling function $\hat{\De}_c(.)$. In the sequel, we compactly express the structural requirements in (\ref{clp2}) and (\ref{clk2}) by writing
$$
G(\De)\in\Gc_2\quad\text{and}\quad K(\De)\in\Kc_2.
$$

\subsection{Classes $\Gc_3$, $\Kc_3$: Triangular gain-scheduling}\label{classes2}

For triangular gain-scheduling, recall that we consider again
Fig.~\ref{fig0}, but now with $P_2=P_2(\De)$ and $C_2=C_2(\De)$ also depending on $\De\in\mathbf{\De}$.
If following the modeling procedure in Sec.~\ref{classes1}, this translates to Fig.~\ref{figs} with $G(\De)$ in (\ref{e1}) satisfying
\begin{equation}\label{clp3}
\begin{aligned}
&\mat{cccc}{
	\Ah_{11}&\Ah_{12}&\Bh_1^p&\Bh_1\\
	\Ah_{21}&\Ah_{22}&\Bh_2^p&\Bh_2\\
	\Ch_1^p&\Ch_2^p&\Dh^p&\Eh\\
	\Ch_1&\Ch_2& \Fh&\Dh}
\in\mat{llll}{\T^{n\ti n}&\T^{n\ti r}&\Tcf^{n\ti m^p}&\T^{n\ti m}\\\T^{s\ti n}&\T^{s\ti r}&\Tcf^{s\ti m^p}& \T^{s\ti m}\\\Trf^{k^p\ti n}&\Trf^{k^p\ti r}&\R^{k^p\ti m^p}&\Trf^{k^p\ti m}\\\T^{k\ti n}&\T^{k\ti r}&\Tcf^{k\ti m^p}&0}\\
&\text{and}\ \  \hat{\De}:\mathbf{V}\to\T^{r\times s},\ \hat{\De}(\De):=\mat{cc}{\De&0\\0&\De}
\end{aligned}
\end{equation}
with \eqref{dim}, where the size $r_\De\ti s_\De$ of $\De$ requires us to choose
\begin{equation}\label{dim2}
r=(r_1,r_2):=(r_\De,r_\De)\text{\ and\ }s=(s_1,s_2):=(s_\De,s_\De).
\end{equation}
Due to the $\De$-dependency in the outer loop of Fig.~\ref{fig0}, all matrices in (\ref{clp3}) except those related to the performance channel are triangular, while $\hat{\De}(.)$ is
diagonally repeated.

Further, $K(\De)$ in Fig.~\ref{figs} results from connecting $C_1(\De)$ with $C_2(\De)$. This is described in the state-space as in (\ref{e2}) for
\begin{equation}\label{clk3}
\begin{aligned}
&\hspace{-0.1ex}\mat{ccc}{\Ah^c_{11}&\Ah^c_{12}&\Bh^c_1\\\Ah^c_{21}&\Ah^c_{22}&\Bh^c_2\\\Ch^c_1&\Ch^c_2&\Dh^c}\in\mat{ccc}{\T^{n^c\ti n^c}&\T^{n^c\ti r^c}&\T^{n^c\ti k}\\\T^{s^c\ti n^c}&\T^{s^c\ti r^c}& \T^{s^c\ti k}\\\T^{m\ti n^c}&\T^{m\ti r^c}&\T^{m\ti k}}\\
&\hspace{-0.1ex}\arraycolsep.2ex\text{and}\
\hat{\De}_c\!:\!\mathbf{V}\!\to\T^{r^c\ti s^c},\
\hat{\De}_c(\De):=\mat{cc}{\hat{\De}^c_{11}(\De)&0\\\hat{\De}^c_{21}(\De)&\hat{\De}^c_{22}(\De)}
\end{aligned}
\end{equation}
with to-be-designed triangular matrices, partitions $n^c,\,r^c,\,s^c\in\N_0^2$ and the scheduling function $\hat{\De}_c(.)$.
Note that it is required to enforce the triangular structure of $\hat{\De}_c(.)$ in order to realize the one-sided controller communication structure in Fig.~\ref{fig0}.
In the sequel, we shortly refer to (\ref{clp3}) and (\ref{clk3}) as
$$
G(\De)\in\Gc_3\quad\text{and}\quad K(\De)\in\Kc_3.
$$

\subsection{Classes $\Gc_4$, $\Kc_4$: Zero feedthrough for $\Hz$-control}\label{classes3}

For $\Hz$-synthesis in Sec.~\ref{sech2} based on Fig.~\ref{figs}, finiteness of the closed-loop norm is only assured if the direct feedthrough term of $w_p\to z_p$ vanishes. However, since the initial closed-loop (\ref{eq3}) depends nonlinearly on $\De$, this cannot be systematically enforced by Theorem~\ref{theo:synthesis1}.

Our approach offers the following remedy. We assume that the direct feedthrough term of the LPV system $G(\Delta)$ is zero.
Following \cite{roesinger2020}, \cite{roesinger2019b}, one can then construct an LFR for $G(\De)$ in (\ref{e1}) which has a structured uncertainty channel according to
\begin{equation}\label{clp4}
\begin{aligned}
&\mat{cccc}{
	\Ah_{11}&\Ah_{12}&\Bh_1^p&\Bh_1\\
	\Ah_{21}&\Ah_{22}&\Bh_2^p&\Bh_2\\
	\Ch_1^p&\Ch_2^p&\Dh^p&\Eh\\
	\Ch_1&\Ch_2& \Fh&\Dh}
\in\mat{llll}{\R^{n\ti n}&\Trf^{n\ti r}&\R^{n\ti m^p}&\R^{n\ti m}\\\Tcf^{s\ti n}&\T^{s\ti r}& \Tc^{s\ti m^p}&\Tcf^{s\ti m}\\\R^{k^p\ti n}&\Tr^{k^p\ti r}&0&\R^{k^p\ti m}\\\R^{k\ti n}&\Trf^{k\ti r}&\R^{k\ti m^p}&0}\\
&\text{with}\ \ \hat{\De}:\mathbf{V}\to\T^{r\times s},\ \hat{\De}(\De):=\mat{cc}{\De&0\\0&\De}.
\end{aligned}
\end{equation}
Here, the partitions $r,\,s\in\N_0^2$ are again defined by \eqref{dim2} according to the dimension of $\De$. Similarly, one can enforce that the direct feedthrough term for the controller $K(\De)$ vanishes, by working
in (\ref{e2}) with the structured LFR
\begin{equation}\label{clk4}
\begin{aligned}
&\mat{ccc}{\Ah^c_{11}&\Ah^c_{12}&\Bh^c_1\\\Ah^c_{21}&\Ah^c_{22}&\Bh^c_2\\\Ch^c_1&\Ch^c_2&\Dh^c}\in\mat{ccc}{\R^{n^c\ti n^c}&\Trf^{n^c\ti r^c}&\R^{n^c\ti k}\\\Tcf^{s^c\ti n^c}&\T^{s^c\ti r^c}& \Tc^{s^c\ti k}\\\R^{m\ti n^c}&\Tr^{m\ti r^c}&0}\\
&\text{and}\ \
\hat{\De}_c:\mathbf{V}\to\T^{r^c\ti s^c},\
\hat{\De}_c(\De):=\mat{cc}{\hat{\De}^c_{11}(\De)&0\\\hat{\De}^c_{21}(\De)&\hat{\De}^c_{22}(\De)}
\end{aligned}
\end{equation}
for to-be-determined dimensions $n^c\in\N_0$, $r^c,\,s^c\in\N_0^2$ and a triangular scheduling block.
If closing the loops (\ref{e1}), (\ref{e2}) for (\ref{clp4}), (\ref{clk4}), a short calculation indeed shows
\begin{equation}\label{decp1}
	\mat{c}{\dot{x}\\\hline z_p\\ y}\,{=}\,\mat{c|cc}{\bu_\De&\bu_\De&\bu_\De\\\hline
		\bu_\De&0 &\bu_\De\\ \bu_\De &  \bu_\De&\bu_\De}\mat{c}{x\\\hline w_p\\ u},
	\mat{c}{\dot{x}_c\\\hline u}\,{=}\,\mat{c|c}{\bu_\De&\bu_\De\\\hline\bu_\De&0}\mat{c}{x_c\\\hline y}
\end{equation}
with $\De$-dependent matrices $\bu_\De$. Then it is guaranteed in a structural fashion that the
direct feedthrough term of $w_p\to z_p$ for the plant-controller interconnection (\ref{decp1})
vanishes identically. We compactly express (\ref{clp4}), (\ref{clk4}) by writing
$$
G(\De)\in\Gc_4\quad\text{and}\quad K(\De)\in\Kc_4.
$$

\section{Analysis for various interconnections}\label{sec3}

For all the plant/controller classes $\Gc_i$/$\Kc_i$ in Sec.~\ref{secvi}, we now present the precise problem formulation and the corresponding analysis conditions in parallel to Secs.~II.A - II.~C.
Also for the closed-loops with $\Gc_i$/$\Kc_i$, the $\Hz$-criterion can be
formulated as in Problem~\ref{problem1}.
Instead, to illustrate the flexibility to handle multiple objectives, the subsequent results are shown for $\Hi$-design with a bound on the $L_2$-gain as an objective for linear time-varying systems (see \cite{scherer2000} for a precise definition).

\begin{prob}\label{problem2}
	If $G(\De)\in\Gc_i$, find a controller $K(\De)\in\Kc_i$ such that the controlled LFR (\ref{eq3}) is well-posed, stable, and, for $x_e(0)=0$,  the squared $L_2$-gain of $w_p\to z_p$ is smaller than some given $\ga>0$ for all $\De\in\mathbf{\De}$.
\end{prob}

Let us formulate the corresponding analysis conditions with a compact yet insightful notation for matrix inequalities. If $X_1,\,X_2,\,Y$ are square matrices and $A_{ij},\,B_i,\,C_j,\,D$ are matrices with compatible dimensions for $i,j=1,2$, the expression
$$
\Lc\left(X_1,X_2,Y;\smat{A_{1j}&B_1\\A_{2j}&B_2\\C_j&D}\right)\cl0
$$
abbreviates the matrix inequality
\begin{equation}\label{notationineq}\arraycolsep.2ex
\mat{ccc}{I&0&0\\ A_{11}&A_{12}&B_1\\\hdl 0&I&0\\ A_{21}&A_{22}&B_2 \\\hdl 0&0&I\\C_1&C_2&D}^T\!\!
\mat{c;{2pt/1pt}c;{2pt/1pt}c}{X_1&0&0\\\hdl 0&X_2&0\\\hdl 0&0&Y}
\mat{ccc}{I&0&0\\ A_{11}&A_{12}&B_1\\\hdl 0&I&0\\ A_{21}&A_{22}&B_2 \\\hdl 0&0&I\\C_1&C_2&D}\cl0.
\end{equation}
The first three arguments of $\Lc(.)$ constitute the diagonal blocks of the middle matrix in (\ref{notationineq}), while the sub-matrices of the last argument are collected in the outer factor of (\ref{notationineq}) according to the induced partitions.
Here, $X_i$ and $Y$ are associated with the block rows $(A_{i1},A_{i2},B_i)$ and $(C_1,C_2,D)$ for $i=1,2$, respectively.
Note that the column partition and the dimensions of the identity matrices in the outer factor of \eqref{notationineq} are uniquely determined since $X_i$ and $Y$ are square. For example, if $(C_1,C_2)\in\R^{k\times\bullet}$ then $D$ has the dimension $k\times(\dim(Y)-k)$.

If the closed-loop scaling class $\mathbf{\hat{P}}$ is again defined as in (\ref{posconstr}),
the analogon of Theorem~\ref{theo:analysish2} for the $\Hi$-cost reads as follows.
\begin{thm}\label{theo:analysisu}
	Problem~\ref{problem2} is solved for $G(\De)\in\Gc_i$, $K(\De)\in\Kc_i$ if there exist $\Xc_1\cg0$ and $\hat{\Pc}\in\mathbf{\hat{P}}$ such that the associated closed-loop system (\ref{eq3}) satisfies
\begin{equation}\tag{$A\text{-}\Hi$}\label{AHi}
\Lc\left(\smat{0&\Xc_1\\\Xc_1&0},\hat{\Pc},\smat{-\ga I&0\\0&I};\smat{\hat\Ac_{1j}&\hat\Bc_1\\\hat\Ac_{2j}&\hat\Bc_2\\\hat\Cc_j&\hat\Dc}\right)\cl0.\\[1.5ex]
\end{equation}
\end{thm}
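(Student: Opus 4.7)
The plan is to mirror the standard full block $S$-procedure dissipativity argument, but with the $L_2$-gain certificate replacing the $\Hz$-one behind Theorem~\ref{theo:analysish2}. I would take $V(x_e):=x_e^T\Xc_1x_e$ as a quadratic storage function and use the multiplier $\hat{\Pc}\in\mathbf{\hat{P}}$ to abstract the scheduling constraint $\hat{w}_e=\hat{\De}_e(\De)\zh_e$ from \eqref{dele}. The first concrete step is to unfold the compact $\Lc$-notation of \eqref{AHi}: right-multiplying the outer factor by $\col(x_e,\hat{w}_e,w_p)$ and invoking the closed-loop equations \eqref{eq3} makes the three row blocks collapse to $\col(x_e,\dot{x}_e)$, $\col(\hat{w}_e,\zh_e)$ and $\col(w_p,z_p)$. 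The strict matrix inequality therefore becomes the pointwise quadratic estimate
$$
2x_e^T\Xc_1\dot{x}_e+\sy\hat{\Pc}\mat{c}{\hat{w}_e\\\zh_e}+z_p^Tz_p-\ga w_p^Tw_p<0
$$
for every nonzero triple $(x_e,\hat{w}_e,w_p)$.

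Along any admissible trajectory we have $\hat{w}_e=\hat{\De}_e(\De)\zh_e$, so $\col(\hat{w}_e,\zh_e)=\mat{c}{\hat{\De}_e(\De)\\I}\zh_e$, and the defining property \eqref{posconstr} of $\mathbf{\hat{P}}$ renders the middle term non-negative. The previous estimate hence collapses to the dissipation inequality $\dot{V}+z_p^Tz_p-\ga w_p^Tw_p<0$, which is the standard starting point. Well-posedness of the controlled LFR is extracted by testing the unfolded LMI at $x_e=0$, $w_p=0$ with a hypothetical $\hat{w}_e\neq0$ fulfilling $\zh_e=\hat{\Ac}_{22}\hat{w}_e$ and $\hat{w}_e=\hat{\De}_e(\De)\zh_e$: the first and the middle term vanish while $z_p^Tz_p\ge0$, contradicting the strictness. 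This forces $I-\hat{\De}_e(\De)\hat{\Ac}_{22}$ to be invertible for every $\De\in\mathbf{V}$.

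Uniform exponential stability then follows by setting $w_p=0$ and exploiting the strictness of \eqref{AHi} to absorb a small $\eps\|x_e\|^2$-term, yielding $\dot{V}\le-\al V$ with a rate $\al=\eps/\lambda_{\max}(\Xc_1)>0$ independent of $\De\in\mathbf{\De}$. Finally, integrating the dissipation inequality from $0$ to $T$ with $x_e(0)=0$ and $V(x_e(T))\ge0$ gives $\|z_p\|_{L_2[0,T]}^2<\ga\|w_p\|_{L_2[0,T]}^2$, and passing to $T\to\infty$ delivers the claimed $L_2$-gain bound in the LTV sense of \cite{scherer2000}. I do not foresee a serious obstacle since each ingredient is classical; the only point of care is to deduce well-posedness, exponential stability and the strict gain estimate in one stroke and uniformly in $\De\in\mathbf{\De}$, for which the strictness of \eqref{AHi} together with the compactness of $\mathbf{V}$ is decisive.
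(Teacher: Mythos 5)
Your proposal is correct and follows exactly the standard dissipativity/full-block-$S$-procedure argument that the paper itself relies on (Theorem~\ref{theo:analysisu} is stated without proof as the $\Hi$-analogue of Theorem~\ref{theo:analysish2}, which is attributed to \cite{scherer2000}): unfold the quadratic form along trajectories, use $\hat{\Pc}\in\mathbf{\hat{P}}$ to discharge the scheduling constraint, and read off well-posedness, exponential stability and the gain bound from the resulting dissipation inequality. The only point to make fully explicit is that the strict $L_2$-gain bound ``smaller than $\ga$'' requires perturbing the strict LMI to $\dot V+\|z_p\|^2-(\ga-\eps)\|w_p\|^2\le 0$ for some $\eps>0$ (pointwise strictness of $\int_0^T$ alone does not bound the supremum away from $\ga$), which you correctly flag as resting on the strictness of \eqref{AHi}.
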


We emphasize that the analysis inequalities in Theorem~\ref{theo:analysish2} can as well be compactly expressed as
\begin{equation}\tag{$A\text{-}\Hz$}
	\begin{array}{l}
	\Lc\left(\smat{0&\Xc_1\\\Xc_1&0},\hat{\Pc},\smat{-\ga I&0\\0&0};\smat{\hat{\Ac}_{1j}&\hat{\Bc}_1\\\hat{\Ac}_{2j}&\hat{\Bc}_2\\\hat{\Cc}_j&\hat{\Dc}}\right)\cl0,\\[2ex]
	\Lc\left(\smat{-\Xc_1&0\\0&0},\hat{\Pc},\smat{0&0\\0&Z^{-1}};\smat{\hat{\Ac}_{1j}\\\hat{\Ac}_{2j}\\\hat{\Cc}_j}\right)\cl0.
	\end{array}
\end{equation}
This observation makes it possible to routinely translate all $\Hi$-synthesis results into the corresponding versions for the $\Hz$-cost, which is omitted for reasons of space.

It is crucial to recognize that our notational preparation in Sec.~\ref{seclifting} allows us
to easily apply the lifting technique for all structured plant and controller pairs
$\Gc_i/\Kc_i$. The lifted plant LFR for $\Gc_i$ is again given by (\ref{liftedblock}), (\ref{eq5})
and abbreviated as $\Gc^l_i$ analogously to Definition~\ref{liftedLFR}.
Notice that
the respective structural properties of $\Gc_i$ are inherited by $\Gc_i^l$.
However, as motivated in Sec.~\ref{seclifting}, we {\em do not lift the controller.} Instead, we
design controllers with their required original structure, but with a square scheduling
block of dimension $|l^c|$, for the respective lifted system $\Gc_i^l$.
Note that the  analysis conditions for the associated closed-loop LFR (\ref{eq3lifted}), (\ref{delc})
are obtained by simply dropping the hats of the matrices in the outer factors
of $(A\text{-}\Hi)$, and by replacing $\hat{\Pc}\in\mathbf{\hat{P}}$ with
$\smat{0&\Pc\\\Pc&0}$ for $\Pc\in\mathbf{P}$.

This strategy leads to a valid controller for the original unlifted plant $\Gc_i$ as well,
which is formulated as follows, and proved in parallel to Theorem~\ref{theo:analysislh2}.
\begin{thm}\label{theo:analysisl}
If there exists a structured controller $K(\De)\in\Kc_i$ with $l^c=r^c=s^c$, $\Xc_1\cg0$ and $\Pc\in\mathbf{P}$, such that the closed-loop system (\ref{eq3lifted}), (\ref{delc}) for $G_l(\De)\in\Gc_i^l$ satisfies
\begin{equation}\tag{$L\text{-}\Hi$}\label{LHi}
\Lc\left(\smat{0&\Xc_1\\\Xc_1&0},\smat{0&\Pc\\\Pc&0},\smat{-\ga I&0\\0&I};\smat{\Ac_{1j}&\Bc_1\\\Ac_{2j}&\Bc_2\\\Cc_j&\Dc}\right)\cl0,
\end{equation}
then $(A\text{-}\Hi)$  holds for the closed-loop system (\ref{eq3}) obtained for
$G(\De)\in\Gc_i$ interconnected with the very same controller $K(\De)\in\Kc_i$ and for some full block scaling $\hat{\Pc}\in\mathbf{\hat{P}}$.
\end{thm}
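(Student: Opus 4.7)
The plan is to parallel the proof of Theorem~\ref{theo:analysislh2} nearly verbatim, since the passage from a lifted scaling $\Pc\in\mathbf{P}$ to a full block scaling $\hat{\Pc}\in\mathbf{\hat{P}}$ is a purely algebraic reshuffling that acts only on the scaling variables and does not interact with the structural constraints imposed by the classes $\Gc_i/\Kc_i$. The only substantive difference is that the pair of $\Hz$-inequalities in (\ref{eqan2}) is replaced by the single $\Hi$-inequality $(L\text{-}\Hi)$, which simplifies rather than complicates the argument.

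First, I would build $\hat{\Pc}$ explicitly from $\Pc$ as in the proof of Theorem~\ref{theo:analysislh2}. Starting from $\Pc\in\mathbf{P}$, i.e., $\He[\Pc\De_{lc}(\De)]\cg 0$ for every $\De\in\mathbf{V}$, I would use the block-diagonal structure $\De_{lc}(\De)=\diag(\De_l(\De),\hat{\De}_c(\De))$ together with the additive decomposition (\ref{X2Dlc}), perform the congruence transformation with $\smat{\hat{\De}(\De)^T&I_s&0\\0&0&I_{l^c}}$ used in (\ref{eqpr1}), and partition $\Pc$ according to (\ref{eqpr2}). The explicit block assembly and permutation produced at the end of that proof then yield a $\hat{\Pc}\in\mathbf{\hat{P}}$ whose definition depends only on the sizes $r,s,r^c,s^c$, not on any internal structure of $\hat{\De}_c(.)$ or of the controller matrices.

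Second, I would derive $(A\text{-}\Hi)$ from $(L\text{-}\Hi)$ by the same reorganization applied to the outer factors. The crucial observation is that the Lyapunov block $\smat{0&\Xc_1\\\Xc_1&0}$ and the performance block $\smat{-\ga I&0\\0&I}$ in the inner factor of $(L\text{-}\Hi)$ coincide exactly with those of $(A\text{-}\Hi)$, so only the scaling block $\smat{0&\Pc\\\Pc&0}$ needs to be rearranged into $\hat{\Pc}$. Since the lifted closed-loop matrices $\Ac_{ij},\Bc_i,\Cc_j,\Dc$ differ from the original $\hat{\Ac}_{ij},\hat{\Bc}_i,\hat{\Cc}_j,\hat{\Dc}$ only through the lifting operation on the uncertainty channel (compare (\ref{e1}) with (\ref{eq5})), the permutation that turns $\smat{0&\Pc\\\Pc&0}$ into $\hat{\Pc}$ simultaneously relates the scheduling-related rows/columns of the outer factors in $(L\text{-}\Hi)$ and $(A\text{-}\Hi)$. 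A single congruence transformation with this permutation then produces $(A\text{-}\Hi)$ for the constructed $\hat{\Pc}$.

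The main hurdle is bookkeeping rather than conceptual: one must verify that the block partitions $(r,r^c,s,s^c)$ governing $\mathbf{\hat{P}}$ and the partitions $(l,l^c)$ with $l=(r,s)$ governing $\mathbf{P}$ align correctly under the permutation, so that the resulting $\hat{\Pc}$ has the row/column ordering matching the structured LFRs of $\Gc_i$ and $\Kc_i$. Because the hypothesis $l^c=r^c=s^c$ holds and the structural constraints (\ref{clp2})--(\ref{clp4}) and (\ref{clk2})--(\ref{clk4}) only restrict the LTI data of the plant and controller while leaving the multipliers in $\mathbf{P}$ and $\mathbf{\hat{P}}$ unstructured, the alignment is automatic and the argument transfers uniformly for $i=2,3,4$ without any additional work.
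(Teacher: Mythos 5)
Your proposal is correct and takes essentially the same route as the paper, which proves Theorem~\ref{theo:analysisl} only by remarking that it goes ``in parallel to Theorem~\ref{theo:analysislh2}'' --- precisely the verbatim adaptation you describe, with the same construction of $\hat{\Pc}$ from $\Pc$ via \eqref{eqpr1}--\eqref{eqpr2} and the observation that the multiplier classes are unstructured so the classes $\Gc_i/\Kc_i$ do not interfere. The only imprecision is calling the final step a ``congruence with a permutation'': since the lifted scheduling channel has $|l|+|l^c|=r+s+l^c$ components versus $s+s^c$ outputs in the original, the passage from \eqref{LHi} to \eqref{AHi} requires the \emph{non-square} congruence with the tall matrix built from $\hat{\De}(\De)$ (exactly as in \eqref{eqpr1}) applied to the scheduling rows of the outer factor, not a mere reordering --- but you identify this transformation correctly for the scaling inequality, so the argument stands.
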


In Sec.~\ref{secsyn} we clarify that lifting is, once again, the key enabler
for the convexification of synthesizing structured controllers for structured plants.

\section{A framework for gain-scheduled synthesis}\label{secsyn}

Let us develop our novel synthesis framework for all classes $\Gc_i/\Kc_i$ from Sec.~\ref{secvi}.
Motivated by the unstructured $\Hz$-setting in Sec.~\ref{syn4},
we expect the lifted analysis conditions $(L\text{-}\Hi)$ in Theorem~\ref{theo:analysisl}
to be matched to the $\Hi$-synthesis conditions
\begin{equation}\tag{$S\text{-}\Hi$}\label{SHi}
\Lc\left(\smat{0&I\\I&0},\smat{0&I\\I&0},\smat{-\ga I&0\\0&I};\smat{\Ahb_{1j}&\Bhb_1\\\Ahb_{2j}&\Bhb_2\\\Chb_j&\Dhb}\right)\cl0\text{\ \ and\ \ }\Xhb\cg0,
\end{equation}
where $\Ahb_{ij},\,\Bhb_i,\,\Chb_j,\,\Dhb$ and $\Xhb$ depend affinely on some new decision variables.
We first show the result for triangular gain-scheduling with $\Kc_3$ in Sec.~\ref{syn2}, which is the main synthesis result in this paper. This will be specialized to $\Kc_2,\,\Kc_4$ in Secs.~\ref{syn1} and \ref{syn3}, respectively.
This procedure has the advantage to exhibit the complexity of the designs as reflected in the size and structure of the decision variables.
For this purpose, we employ the following projection operators in relation to the subspaces $\T,\,\Tr$ in Definition~\ref{struct_not}.
\begin{defn}\label{defproj}
	For partitioned matrices $\smat{J_{11}&J_{12}\\J_{21}&J_{22}}$, $\smat{J_1&J_2}$, we define the projection $\Lo(.)$ onto their lower part, $\U(.)$ onto their strict upper part, and $\Ur$ onto their right part as
	$$
	\begin{aligned}
	\Lo\smat{J_{11}&J_{12}\\J_{21}&J_{22}}&:=\smat{J_{11}&0\\J_{21}&J_{22}},
	&\U\smat{J_{11}&J_{12}\\J_{21}&J_{22}}&:=\smat{0&J_{12}\\0&0},\\
	\Ur\smat{J_1&J_2}&:=\smat{0&J_2}.
	\end{aligned}
	$$
\end{defn}
\phantom{space}

In the sequel, we employ several structured unknowns consisting of decision variables and constant blocks.
To display the convexification mechanism, we use boldface notations for the variable parts to differentiate
them from those matrices which are constant; recall from Sec.~\ref{sech2} that matrices with a hat
might carry a specific sub-structure by themselves.

We close this chapter by showing possible extensions of our framework in Sec.~\ref{extensions}, while discussing the question of optimality in the lifted setting in Sec.~\ref{secopt}.

\subsection{Synthesis for $\Gc_3$, $\Kc_3$: Triangular gain-scheduling}\label{syn2}

Let us motivate the choice of the design variables by putting an emphasis on the
relevant modifications if compared to unstructured gain-scheduling in Sec.~\ref{syn4}.
For triangular gain-scheduling with $\Gc_3/\Kc_3$, we have to structurally restrict
the plant/controller LFRs (\ref{e1})/(\ref{e2}) to \eqref{clp3}/\eqref{clk3}, which involves
triangular matrices both in the control and uncertainty channel. In the subsequent construction, we take inspiration
from the nominal triangular controller design procedure in \cite{scherer2014}, \cite{roesinger2019a}. For compact notations, let us introduce the partitions
\begin{equation}\label{partitionex}
\begin{aligned}
&\hspace{-1ex}a=(a_1,a_2):=(|n|,|n|),&\,b&=(b_1,b_2):=(|l|,|l|),\\
&\hspace{-1ex}u=(u_1,u_2):=(|n|,|n|),&\,v&=(v_1,v_2):=(|l|,|l|).
\end{aligned}
\end{equation}
Instead of \eqref{v1}, we now choose the decision variables
\begin{equation}\label{tgs1}
X_1:=\mat{cc}{\bs{X_2}&\bs{X_3}}\in\R^{n\ti a},\quad
Y_1:=\mat{cc}{\bs{Y_1}&\bs{Y_2}}\in\R^{n\ti u}
\end{equation}
with unstructured symmetric matrices $\bs{X_3},\,\bs{Y_1}\in\Sb^{n}$ and sub-structured coupled blocks
\begin{equation}\label{ev1}
\begin{aligned}
\bs{X_2}&:=\mat{c}{\bs{\hat{X}_2}\\\hat{X}_2}:=\mat{cc}{\bs{X_{22}}&\bs{Z}^T_{\bs{22}}\\0&I_{n_2}}\in\R^{n\ti n},\\
\bs{Y_2}&:=\mat{c}{\hat{Y}_2\\\bs{\hat{Y}_2}}:=
\mat{cc}{I_{n_1}&0\\-\bs{Z_{22}}&\bs{Y_{22}}}\in\R^{n\ti n},
\end{aligned}
\end{equation}
where $\bs{X_{22}}\in\Sb^{n_1}$, $\bs{Y_{22}}\in\Sb^{n_2}$ are symmetric and the shared variable
$\bs{Z_{22}}\in\R^{n_2\ti n_1}$ is general. Note that the latter partitions are motivated by
the $2\ti 2$-block triangular structure of the state-matrix $A_{11}\in\T^{n\ti n}$ of the lifted plant $G_l(\De)\in\Gc_3^l$,
as inherited from the one of the original system $G(\De)\in\Gc_3$. For the quick identification of structural dependencies,
bold and non-bold notations are used for sub-matrices according to whether they are genuine variables or fixed, respectively.
In the synthesis conditions, \eqref{s22c} is replaced by
\begin{equation}\label{couplingt}
\Xhb:=\mat{ccc}{\bs{Y_1}&\bs{Y_2}&I\\\bs{Y}^T_{\bs{2}}&\bs{Y}^T_{\bs{2}}\bs{X_2}&\bs{X}^T_{\bs{2}}\\I&\bs{X_2}&\bs{X_3}}\in\Sb^{(u_1,u_2,a_2)},
\end{equation}
where it is noted that
$\bs{Y}^T_{\bs{2}}\bs{X_2}=\smat{\hat{Y}_2\\\bs{\hat{Y}_2}}^T\smat{\bs{\hat{X}_2}\\\hat{X}_2}=\smat{\bs{X_{22}}&0\\0&\bs{Y_{22}}}$ is symmetric and depends affinely on the decision variables.
Therefore, $\Xhb\cg0$ in \eqref{SHi} does indeed constitute an LMI.

Let us now introduce a novel collection of decision variables that correspond to the multiplier $\Pc$ in \eqref{LHi} and is adapted to the lifted LFR. Recall that the uncertainty channel of the initial plant LFR \eqref{clp3} comprises the triangular matrix $\Ah_{22}\in\T^{s\ti r}$. This allows us to express $A_{22}$ in the lifted LFR \eqref{eq5} as
\begin{equation}\label{Apartition}
A_{22}=\mat{cc}{I_{r}&0\\2\Ah_{22}&-I_{s}}=\mat{cc;{2pt/1pt}cc}{I_{r_1}&0&0&0\\0&I_{r_2}&0&0\\\hdl \bu&0&-I_{s_1}&0\\\bu&\bu&0&-I_{s_2}},
\end{equation}
a $2\ti 2$ block triangular matrix with a refined sub-partition
	\begin{equation}\label{Lpartition}
	l=(r,s)=((r_1,r_2),(s_1,s_2)).
	\end{equation}
Analogously to the doubling of the column length in \eqref{tgs1} if compared to the unstructured case, the $2\ti2$ block-triangular structure of $A_{22}$ in \eqref{Apartition} motivates to also double the column length $|l|$ of the multiplier variables \eqref{v2} in unstructured synthesis. Recalling \eqref{partitionex}, we hence pick
\begin{equation}\label{tgs2}
X_2:=\mat{cc}{\bs{P_2}&\bs{P_3}}\in\R^{l\ti b},\quad
Y_2:=\mat{cc}{\bs{Q_1}&\bs{Q_2}}\in\R^{l\ti v}
\end{equation}
with $l\ti l$-dimensional sub-blocks $\bs{P_i},\,\bs{Q_j}$. As in Sec.~\ref{syn4}, we impose the constraints $\bs{P_3}\in\mathbf{P}_p$ and $\bs{Q_1}\in\mathbf{P}_d$ with the primal/dual scaling class (\ref{primal})/(\ref{dual}), but with the difference that $\mathbf{P}_p/\mathbf{P}_d$ are now
defined for $\Gc_3$ in relation to the structured uncertainty block $\hat{\De}(\De)=\smat{\De&0\\0&\De}$. In view of the hierarchical structure of $A_{22}$ in \eqref{Apartition}, we use the structured square variables
\begin{equation}\label{ev3}
\begin{aligned}
\bs{P_2}&:=\mat{c}{\bs{\hat{P}_{2}}\\\hat{P}_{2}\\\bs{\hat{P}_{3}}\\\hat{P}_{3}}:=
	\mat{cc;{2pt/1pt}cc}{\bs{P_{22}}&\bs{R}^T_{\bs{22}}&\bs{P_{23}}&\bs{R}^T_{\bs{32}}\\ 0&I_{r_2}&0&0\\\hdl \bs{P_{32}}&\bs{R}^T_{\bs{23}}&\bs{P_{33}}&\bs{R}^T_{\bs{33}}\\0&0&0&I_{s_2}}
\in\R^{l\ti l}
,\\
\bs{Q_2}&:=\mat{c}{\hat{Q}_2\\\bs{\hat{Q}_2}\\\hat{Q}_3\\\bs{\hat{Q}_3} }
:=
\mat{cc;{2pt/1pt}cc}{I_{r_1}&0&0&0\\-\bs{R_{22}}&\bs{Q_{22}}&-\bs{R_{23}}&\bs{Q_{23}}\\\hdl 0&0&I_{s_1}&0\\-\bs{R_{32}}&\bs{Q_{32}}&-\bs{R_{33}}&\bs{Q_{33}}}
\in\R^{l\ti l}
\end{aligned}
\end{equation}
according to $l$ and its sub-partition in \eqref{Lpartition}, where
\begin{equation}\label{ev2}
\begin{aligned}
&\mat{cc}{\bs{P_{22}}&\bs{P_{23}}\\\bs{P_{32}}&\bs{P_{33}}}\,\in\Sb^{(r_1,s_1)},\quad
\mat{cc}{\bs{Q_{22}}&\bs{Q_{23}}\\\bs{Q_{32}}&\bs{Q_{33}}}\in\Sb^{(r_2,s_2)},\\
&\mat{cc}{\bs{R_{22}}&\bs{R_{23}}\\\bs{R_{32}}&\bs{R_{33}}}\in\R^{(r_2,s_2)\ti(r_1,s_1)}.
\end{aligned}
\end{equation}
As a counterpart to $\Xhb\cg0$ for \eqref{couplingt} involving $\bs{Y}^T_{\bs{2}}\bs{X_2}$, the synthesis conditions also involve the multiplier constraint
\begin{equation}\label{couplings}
\He\left[\bs{P}^T_{\bs{2}}\De_l(\De)\bs{Q_2}\right]\cg0\quad\text{for all}\quad \De\in\mathbf{V}.
\end{equation}
Note that \eqref{ev3} and \eqref{ev2} are coupled similarly to \eqref{ev1}, but now with a specific sub-structure having the key benefit that \eqref{couplings} is actually affine in the decision variables. Indeed, with the definition (\ref{liftedblock}) of $\De_l(\De)$ involving $\hat{\De}(\De)$ for $\Gc_3$, this is extracted from
$$
\begin{aligned}
&\hspace{-1.5ex}\bs{P}^T_{\bs{2}}\De_l(\De)\bs{Q_2}=\bs{P}^T_{\bs{2}}\smat{-I_{r}&2\hat{\De}(\De)\\0&I_{s}}\bs{Q_2}=\\
&\hspace{-1.5ex}=\smat{\bs{\hat{P}_{3}}\\\hat{P}_{3}}^T\smat{\hat{Q}_3\\\bs{\hat{Q}_3}}-\smat{\bs{\hat{P}_{2}}\\\hat{P}_{2}}^T\smat{\hat{Q}_2\\\bs{\hat{Q}_2}}+
2\smat{\bs{\hat{P}_{2}}\\\hat{P}_{2}}^T\!\!\smat{\De&0\\0&\De}\smat{\hat{Q}_3\\\bs{\hat{Q}_3}}
\end{aligned}
$$
and our convention to differentiate bold from non-bold blocks.

As in Sec.~\ref{syn4}, our design relies on a convexifying parameter transformation matching the controller matrices to new decision variables. As seen for nominal triangular design \cite{scherer2014}, \cite{roesinger2019a}, such a transformation can be made applicable to preserve triangular structures.
Based on the choice \eqref{tgs1}-\eqref{ev1}, \eqref{tgs2}-\eqref{ev2},
our lifting approach even shows this fact for triangular gain-scheduling.
To this end, we match the triangular controller matrices \eqref{clk3} for $\Kc_3$ to the triangular unknowns
\begin{equation}\label{klmn}
\mat{ccc}{\bs{K_{11}}&\bs{K_{12}}&\bs{L_1}\\\bs{K_{21}}&\bs{K_{22}}&\bs{L_2}\\\bs{M_1}&\bs{M_2}&\bs{N}}\in\mat{lll}{\T^{a\ti u}&\T^{a\ti v}&\T^{a\ti k}\\\T^{b\ti u}&\T^{b\ti v}& \T^{b\ti k}\\\T^{m\ti u}&\T^{m\ti v}&\T^{m\ti k}}
\end{equation}
with the partitions defined by \eqref{partitionex}.

Lastly, for a compact exposition and by using  \eqref{partitionex}, we introduce the structured matrices
\begin{equation}\label{a2}
\begin{aligned}
B_1^e&:=\mat{cc}{0_{a_1\ti m_1}&\t B_1\\0_{a_2\ti m_1}&0_{a_2\ti m_2}},
&B_2^e&:=\mat{cc}{0_{b_1\ti m_1}&\t B_2\\0_{b_2\ti m_1}&0_{b_2\ti m_2}},\\
C_1^e&:=\mat{cc}{0_{k_1\ti u_1}&\t C_1\\0_{k_2\ti u_1}&0_{k_2\ti u_2}},
&C_2^e&:=\mat{cc}{0_{k_1\ti v_1}&\t C_2\\0_{k_2\ti v_1}&0_{k_2\ti v_2}};
\end{aligned}
\end{equation}
in here, the blocks $\t B_i$ and $\t C_j$ are extracted from $B_1\in\T^{n\ti m}$, $B_2\in\T^{l\ti m}$ and $C_1\in\T^{k\ti n}$, $C_2\in\T^{k\ti l}$ in the lifted LFR (\ref{eq5}) according to
\begin{equation}\label{a1}
\mat{c;{2pt/1pt}c}{\bu&\t B_i}:=\mat{c;{2pt/1pt}c}{\bu&0\\\bu&\bu}=B_i \te{and}
\mat{c}{\t C_j\\\hdl \bu}:=\mat{cc}{\bu&0\\\hdl \bu&\bu}=C_j.
\end{equation}
The main result for triangular gain-scheduling then reads as follows.
\begin{thm}\label{theo:synthesis4}
	There exists a structured controller $K(\De)\in\Kc_3$ with $l^c=r^c=s^c$ and $\Xc_1\cg0$, $\Pc\in\mathbf{P}$, such that the closed-loop system \eqref{eq3lifted}, \eqref{delc} obtained for $G_l(\De)\in\Gc_3^l$ satisfies \eqref{LHi} iff there exists Lyapunov variables \eqref{tgs1}-\eqref{ev1} specifying \eqref{couplingt}, multiplier variables \eqref{tgs2}-\eqref{ev2} and controller variables \eqref{klmn}
fulfilling \eqref{SHi} and \eqref{couplings} with $\Ahb_{ij},\,\Bhb_i,\,\Chb_j,\,\Dhb$ defined by \eqref{s22} for $N:=\bs{N}$ and
	\begin{equation}\label{klmn2}
	\begin{aligned}
	L_i&:=\bs{L_i}+\U(B_i^e\bs{N}),\  \ M_j:=\bs{M_j}+\U(\bs{N}C_j^e),\\
	K_{ij}&:=\bs{K_{ij}}+\U(X^T_iA_{ij}Y_j)+\\
	&\phantom{=}+\U(B_i^e\bs{M_j})+\U(\bs{L_i}C_j^e)-B_i^e\bs{N}C_j^e.\\
	\end{aligned}
	\end{equation}
	If \eqref{SHi} and \eqref{couplings} are satisfied, one can construct $\Xc_1\cg0$, $\Pc\in\mathbf{P}$, and $K(\De)\in\Kc_3$ with an affine triangular scheduling function $\hat{\De}_c(\De)$ such that \eqref{LHi} holds, while the size of the state and scheduling matrix $\Ah_{11}^c$ and $\Ah_{22}^c$ of the controller $K(\De)$ are bounded by $2|n|\ti 2|n|$ and $2|l|\ti 2|l|$, respectively.
\end{thm}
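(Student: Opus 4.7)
My proof plan follows the four-step strategy used in Theorem~\ref{theo:synthesis1} for unstructured gain-scheduling, but every step must be adapted to preserve the triangular structure dictated by $\Gc_3/\Kc_3$. The overall template is: (i) factorize the Lyapunov matrix $\Xc_1$ as $\Xc_1\Yc_1=\Zc_1$; (ii) factorize the scaling $\Pc=\Xc_2$ as $\Xc_2\Yc_2=\Zc_2$; (iii) eliminate the scheduling function $\hat{\De}_c(.)$; (iv) apply a convexifying parameter transformation to go from the controller data $(\Ah^c_{ij},\Bh^c_i,\Ch^c_j,\Dh^c)$ to the synthesis variables in \eqref{klmn}. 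The crucial difference is that, to end up with a triangular controller in $\Kc_3$, the factors $\Yc_i,\Zc_i$ must themselves carry a triangular/sub-triangular shape compatible with the $2\times 2$ block partitions in \eqref{Apartition} and \eqref{partitionex}. This is precisely the role of the structured variables in \eqref{tgs1}--\eqref{ev1} and \eqref{tgs2}--\eqref{ev2}: they encode factorizations whose non-trivial blocks $\bs{X_{22}},\bs{Y_{22}},\bs{Z_{22}}$ (for the Lyapunov side) and $\bs{P_{ij}},\bs{Q_{ij}},\bs{R_{ij}}$ (for the multiplier side) appear affinely in $\Xhb$ and in the constraint \eqref{couplings}.

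For necessity I would start from $\Xc_1\cg 0$ and $\Pc\in\mathbf{P}$ solving \eqref{LHi}, pick $n^c,l^c$ large enough, and construct $\Yc_1,\Zc_1$ of the form $\Yc_1=\smat{Y_1&I\\ V_1&0}$, $\Zc_1=\smat{I&X_1\\ 0&U_1}$ where $Y_1,X_1$ now carry the two-block structure of \eqref{tgs1}--\eqref{ev1}. The analogous construction for the multiplier side gives $\Yc_2,\Zc_2$ in which $Y_2,X_2$ inherit the finer $2\times 2$ sub-structure of \eqref{ev3}. Doing this correctly requires a perturbation argument (as in Step~2 of the proof of Theorem~\ref{theo:synthesis1}) to successively ensure invertibility of the relevant Schur complements, the sign conditions for $X_2\in\mathbf{P}_p$, $Y_2\in\mathbf{P}_d$, and finally $\He[\bs{P}^T_{\bs{2}}\De_l(\De)\bs{Q_2}]\cg 0$ (i.e.~\eqref{couplings}) by the same additive decomposition \eqref{X2Dlc}--\eqref{exdec} applied separately to the two diagonal sub-blocks of $\De_l(\De)$. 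Congruence transformations with $\Yc_1,\Yc_2$ on the block rows/columns of \eqref{LHi} then yield \eqref{SHi}, provided one defines the synthesis variables through the parameter substitution \eqref{s2} but projected via $\Lo(.)$ onto the triangular subspace. The definitions of $K_{ij},L_i,M_j$ in \eqref{klmn2} are precisely the $\Lo$-parts of that substitution; the $\U$-corrections $\U(B_i^e\bs N)$, $\U(\bs N C_j^e)$ and the cross-terms involving $B_i^e, C_j^e$ from \eqref{a2}--\eqref{a1} exactly cancel the strictly-upper parts that would otherwise destroy the triangular shape of $\bs{K_{ij}},\bs{L_i},\bs{M_j}$.

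For sufficiency I would invert this construction. Given variables satisfying \eqref{SHi} and \eqref{couplings}, the relation $\Xhb\cg 0$ together with the coupling $\bs Y^T_{\bs 2}\bs X_{\bs 2}$ being block-diagonal guarantees invertibility of $I-X_1^TY_1$ after a mild perturbation of $\bs X_3,\bs Y_1$; likewise perturb $\bs{P_3},\bs{Q_1}$ to invertibility for the multiplier side. Choosing $U_i:=I$ yields square invertible $\Yc_i,\Zc_i$ so that $\Xc_i:=\Zc_i\Yc_i^{-1}$ is symmetric, with $\Xc_1\cg 0$, and solving the multiplier elimination equation analogous to \eqref{scheduling}, but now in its $2\times 2$ block-triangular version, produces a lower-triangular affine $\hat{\De}_c(\De)$ certifying $\Pc\in\mathbf{P}$ through \eqref{couplings}. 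Finally inverting the parameter substitution \eqref{klmn2} produces $\Ah^c_{ij},\Bh^c_i,\Ch^c_j,\Dh^c$, and the decisive point is that the $\Lo/\U$-corrections ensure these recovered controller matrices land back in the triangular classes prescribed by \eqref{clk3}. Bounds on controller order and scheduling-block size come from $|n^c|\le 2|n|$ and $|l^c|\le 2|l|$ induced by doubling the column dimension in \eqref{tgs1},\eqref{tgs2}.

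The main obstacle I expect is the bookkeeping in Step~(iv): verifying that the sub-structure imposed on $\bs{X_2},\bs{Y_2},\bs{P_2},\bs{Q_2}$ together with the $\Lo/\U$-split in \eqref{klmn2} is precisely tight enough so that (a) the affine dependence in \eqref{SHi} and \eqref{couplings} is preserved, and (b) inversion of the substitution returns triangular controller ingredients. A secondary subtlety is the perturbation step that enforces simultaneous invertibility of several Schur complements without disturbing either \eqref{LHi} or the sign conditions defining $\mathbf{P}_p,\mathbf{P}_d$; for this I would invoke the same continuity argument sketched after \eqref{fac} in the proof of Theorem~\ref{theo:synthesis1}, applied blockwise to the refined partition \eqref{Lpartition}.
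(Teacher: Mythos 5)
Your proposal follows essentially the same route as the paper's own proof: the four-step necessity argument (structured factorizations of $\Xc_1$ and $\Xc_2$ with perturbation to secure invertibility of the relevant Schur complements, elimination of $\hat{\De}_c$ via the diagonal blocks of the transformed multiplier inequality, and the $\Lo/\U$-split parameter transformation using \eqref{r2}), followed by the reverse construction for sufficiency with $U_i=I$, triangular $V_i$, and a block-triangular choice of $\hat{\De}_c$ cancelling only the off-diagonal blocks. The only part your plan leaves genuinely underspecified is the two-stage factorization of $\Xc_2$ producing the coupled blocks $\bs{P_2},\bs{Q_2}$ of \eqref{ev3} (the paper's permutation-and-Schur-complement construction \eqref{fact}--\eqref{ev5}), but this is a matter of detail rather than a flaw in the approach.
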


The constructive proof of Theorem~\ref{theo:synthesis4} is given in the Appendix.
In the sufficiency part, we derive explicit formulas for the controller matrices (Step~3) and for $\hat{\De}_c(\De)$ (Step~2).

Since $\hat{\De}(.)$ is affine and $\mathbf{V}=\text{Co}\{\De_1,\ldots,\De_N\}$, recall that
the constraints $\bs{P_3}\in\mathbf{P}_p$, $\bs{Q_1}\in\mathbf{P}_d$ admit an LMI representation.
Similarly, \eqref{couplings} can be equivalently replaced by $\He\left[\bs{P}^T_{\bs{2}}\De_l(\De)\bs{Q_2}\right]\cg0$ for all $\Delta\in\{\De_1,\ldots,\De_N\}$. Hence, Theorem~\ref{theo:synthesis4} leads to a finite dimensional LMI test, since all variables enter in an affine fashion as is easily checked also for those expressions that have not yet been considered. E.g.,
$$
\begin{aligned}
&\hspace{6ex}\U(X^T_1A_{12}Y_2)=\smat{0&\bs{X}^T_{\bs{2}}A_{12}\bs{Q_2}\\0&0}\te{with}\\
&\bs{X}^T_{\bs{2}}A_{12}\bs{Q_2}=\bs{X}^T_{\bs{2}}\mat{cc}{\Ah_{12}&0_{n\ti s}}\bs{Q_2}
=\smat{\bs{\hat{X}_{2}}\\\hat{X}_{2}}^T\Ah_{12}\smat{\hat{Q}_2\\\bs{\hat{Q}_{2}}}
\end{aligned}
$$
is indeed affine in $\bs{\hat{X}_{2}}$ and $\bs{\hat{Q}_{2}}$ since $\Ah_{12}\in\T^{n\ti r}$ is triangular.

Let us finally emphasize that by now standard relaxation strategies
permit us to easily extend our results to value sets $\mathbf{V}$ that are semi-algebraic, for example \cite{Sch06,VeeSch16a}.

\subsection{Synthesis for $\Gc_2$, $\Kc_2$: Partially scheduled nested loop}\label{syn1}

For $\Gc_2$/$\Kc_2$ we consider the plant/controller LFRs in (\ref{e1})/(\ref{e2}) with \eqref{clp2}/\eqref{clk2}, respectively. As for $\Gc_3$/$\Kc_3$, all matrices in the control channels are $2\ti2$ block triangular. In contrast, the uncertainty channels carry no partition as in the unstructured design with $\Gc_1$/$\Kc_1$.
This motivates to choose the structured design variables for the Lyapunov part as in Sec.~\ref{syn2}, i.e., $X_1,\,Y_1$ as in \eqref{tgs1}-\eqref{ev1} with $\Xhb$ in \eqref{couplingt}. On the other hand, we rely for the scaling part on the choice in Sec.~\ref{syn4}, i.e.,
\begin{equation}\label{v22}
X_2:=\bs{P_2}\in \mathbf{P}_p,\qquad Y_2:=\bs{Q_1}\in\mathbf{P}_d
\end{equation}
with the primal/dual scaling class (\ref{primal})/(\ref{dual}) for $\hat{\De}(\De)=\De$.

In a fashion analogous to the design results for $\Kc_3$, $\Kc_1$, we take controller variables that match the structure of $\Kc_2$, i.e.,
\begin{equation}\label{v7}
\mat{ccc}{\bs{K_{11}}&\bs{K_{12}}&\bs{L_1}\\\bs{K_{21}}&\bs{K_{22}}&\bs{L_2}\\\bs{M_1}&\bs{M_2}&\bs{N}}\in\mat{lll}{\T^{a\ti u}&\Tcf^{a\ti |l|}&\T^{a\ti k}\\\Tr^{|l|\ti u}&\R^{|l|\ti |l|}& \Tr^{|l|\ti k}\\\T^{m\ti u}&\Tcf^{m\ti |l|}&\T^{m\ti k}}
\end{equation}
with $a,\,u\in\N_0^2$ as in \eqref{partitionex}.
The next result with $B_1^e,\,C_1^e$ from \eqref{a2} can be shown by directly simplifying the proof of Theorem~\ref{theo:synthesis4}; we drop the details for brevity.
\begin{thm}\label{theo:synthesis3}
	There exists a structured controller $K(\De)\in\Kc_2$ with $l^c=r^c=s^c$ and $\Xc_1\cg0$, $\Pc\in\mathbf{P}$, such that the closed-loop system (\ref{eq3lifted}), (\ref{delc}) obtained for $G_l(\De)\in\Gc_2^l$ satisfies \eqref{LHi} iff there exists Lyapunov variables \eqref{tgs1}-\eqref{ev1} specifying \eqref{couplingt}, multiplier variables \eqref{v22} and controller variables \eqref{v7} fulfilling \eqref{SHi} with $\Ahb_{ij},\,\Bhb_i,\,\Chb_j,\,\Dhb$ defined by (\ref{s22}) for $N:=\bs{N}$, for \eqref{klmn2} if $(i,j)=(1,1)$, and for
	\begin{equation}\label{klmn22}
	\begin{aligned}
	L_2&:=\bs{L_2},\hspace{0.75ex}  M_2:=\bs{M_2},\hspace{0.75ex}  K_{12}:=\bs{K_{12}},\hspace{0.75ex}  K_{22}:=\bs{K_{22}},\\
	K_{21}&:=\bs{K_{21}}+\Ur(X_2^TA_{21}Y_1)+\Ur(\bs{L_2}C^e_1).
	\end{aligned}
	\end{equation}
	If \eqref{SHi} is satisfied, one can construct $\Xc_1\cg0$, $\Pc\in\mathbf{P}$, and $K(\De)\in\Kc_2$ with an affine scheduling function $\hat{\De}_c(\De)$ such that \eqref{LHi} holds, while the size of the state and scheduling matrix $\Ah_{11}^c$ and $\Ah_{22}^c$ of the controller $K(\De)$ are bounded by $2|n|\ti 2|n|$ and $|l|\ti |l|$, respectively.
\end{thm}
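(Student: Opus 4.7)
The plan is to adapt the proof of Theorem~\ref{theo:synthesis4} by hybridizing it with the unstructured scaling machinery from the proof of Theorem~\ref{theo:synthesis1}. The structural data for $\Gc_2/\Kc_2$ differ from $\Gc_3/\Kc_3$ in exactly one respect: the control channel is $2\times 2$ block-triangular (so $A_{11},A_{12},A_{21},A_{22},B_i,C_j$ respect the $\T$/$\Tcf$/$\Tr$-pattern of \eqref{clp2}), but the uncertainty block $\hat{\De}(\De)=\De$ carries no sub-partition. Consequently, I would keep the structured Lyapunov variables \eqref{tgs1}-\eqref{ev1} and the coupling matrix \eqref{couplingt} exactly as in Sec.~\ref{syn2}, while collapsing the sub-partition on the scaling side so that $X_2=\bs{P_2}$ and $Y_2=\bs{Q_1}$ are only required to lie in $\mathbf{P}_p,\mathbf{P}_d$ as in Sec.~\ref{syn4}.

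For necessity, I would start, as in Step~1 of the proof of Theorem~\ref{theo:synthesis1}, by pushing $\Xc_1$ and $\Xc_2=\Pc$ to the outer factors of \eqref{LHi} using the lifted anti-diagonal structure. I would then factorize $\Xc_1$ in the refined triangular way $\Xc_1\Yc_1=\Zc_1$ of Theorem~\ref{theo:synthesis4} (this is where the $2\times 2$ block-triangular pattern of $A_{11}$ forces the coupled partition \eqref{ev1}), and factorize $\Xc_2$ by the unstructured route of Theorem~\ref{theo:synthesis1} (perturb to get invertibility of the lower-right block and of $X_2-U_2^TZ_2^{-1}U_2$, then $\smat{Y_2\\V_2}:=\Xc_2^{-1}\smat{I_l\\0}$). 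The scheduling function $\hat\De_c$ is eliminated exactly as in Step~3 of Theorem~\ref{theo:synthesis1}, producing $\bs{P_2}\in\mathbf{P}_p$ and $\bs{Q_1}\in\mathbf{P}_d$. Finally, the convexifying congruence with $\Yc_1$ on the Lyapunov row/column and $\Yc_2$ on the scaling row/column, together with the standard substitution \eqref{s2}, yields the closed-loop formulas; the extra $\U(\cdot)$ terms in \eqref{klmn2} arise exactly as in Theorem~\ref{theo:synthesis4} from separating the triangular parts of $B_i,C_j,X_i^TA_{ij}Y_j$ into their lower-triangular (to-be-designed) and strict-upper (constant) pieces via the projections $\Lo,\U,\Ur$ of Definition~\ref{defproj}. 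Note that in the mixed case only the column $(K_{21},\,L_2,\,M_1)$ needs an $\Ur$-correction in \eqref{klmn22}, because the corresponding block rows/columns mix the triangular state signal with the unstructured uncertainty signal.

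For sufficiency, the construction is reversed as in Sec.~\ref{syn4}: $\Xhb\cg0$ ensures invertibility of $V_1:=I-X_1^TY_1$, and one perturbs $\bs{P_2},\bs{Q_1}$ if necessary to make $V_2:=I-X_2^TY_2$ invertible. Setting $U_i:=I$ gives invertible factors $\Yc_i,\Zc_i$ with $\Xc_i:=\Zc_i\Yc_i^{-1}$ symmetric, and $\Xc_1\cg 0$ follows as before. The scheduling function $\hat{\De}_c$ is recovered from the analogue of \eqref{scheduling}, and inverting the transformation \eqref{s2} — with $\bs{K_{ij}},\bs{L_i},\bs{M_j},\bs{N}$ restricted as in \eqref{v7} — produces controller matrices inheriting the exact $\T$/$\Tcf$/$\Tr$-pattern of $\Kc_2$ because the block-triangular structure of $B_i^e,C_j^e,X_i,Y_j$ is preserved under the inverse substitution. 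The controller order bound $2|n|\times 2|n|$ is immediate from the doubled column length in \eqref{tgs1}, while the scheduling block size $|l|\times|l|$ (not $2|l|\times 2|l|$) reflects that $X_2,Y_2$ are kept with a single column $|l|$-block.

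The main obstacle is bookkeeping rather than a new idea: one must verify that, in the mixed setting, the projections $\U,\Ur$ exhibit the correct partial triangular structure in $K_{ij},L_i,M_j$ so that the inverse substitution really lands in the $\Kc_2$-pattern \eqref{clk2} (in particular that the middle block-column of $(K_{ij},L_i)$ and middle block-row of $(K_{ij},M_j)$ end up unstructured while the remaining blocks stay triangular), and, on the multiplier side, that replacing the structured $\bs{P_2},\bs{Q_2}$ of Sec.~\ref{syn2} by unstructured $\bs{P_2},\bs{Q_1}$ still makes the elimination of $\hat{\De}_c$ go through via the congruence with $\smat{I&\hat{\De}\\0&I}$/$\smat{I&0\\-\hat{\De}^T&I}$ used in Step~3 of the proof of Theorem~\ref{theo:synthesis1}. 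Once this correspondence is laid out, all algebraic identities carry over verbatim from the proof of Theorem~\ref{theo:synthesis4}.
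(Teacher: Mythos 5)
Your proposal follows essentially the same route as the paper, which obtains Theorem~\ref{theo:synthesis3} by specializing the proof of Theorem~\ref{theo:synthesis4} with the degenerate partitions $r_2=s_2=0$ and $b_2=v_2=0$, so that the triangular Lyapunov factorization is retained while the scaling factorization collapses to the unstructured one of Theorem~\ref{theo:synthesis1} --- precisely your hybrid. One bookkeeping correction: in \eqref{klmn22} only $K_{21}$ carries an $\Ur$-correction, whereas $L_2$ and $M_2$ carry none --- because $v_2=0$ kills $\U(.)$ for $M_2$ (and $C_2^e=0$), and $\Ur(B_2^e\bs{N})=0$ for $L_2$ since $\Bh_2\in\Tr^{s\ti m}$ forces $\t B_2=0$ in \eqref{a1}, hence $B_2^e=0$ --- while $M_1$ receives the full $\U$-correction from \eqref{klmn2}, not an $\Ur$-one.
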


Let us now clarify that \eqref{klmn22} is a direct specialization of \eqref{klmn2} for fully triangular synthesis.
Indeed, the controller variables \eqref{klmn} specialize to \eqref{v7} with
\begin{equation}\label{d2}
b=(b_1,b_2):=(|l|,0)\quad\text{and}\quad v=(v_1,v_2):=(|l|,0).
\end{equation}
Moreover, since all blocks in \eqref{klmn2} inherit the partition of \eqref{klmn}, $v_2=0$ implies $\U(.)=0$ for $K_{12},\,K_{22}$ and $M_2$, while $b_2=0$ shows $\U(.)=\Ur(.)$ for $K_{21}$ and $L_2$. In \eqref{a2}, we note that $C^e_2=0$ due to \eqref{d2}. We also have $B^e_2=0$,
since the initial LFR for $\Gc_2$ involves $\Bh_2\in\Tr^{s\ti m}$, which implies $B_2\in\Tr^{l\ti m}$ for the lifted LFR \eqref{eq5} and thus $\t{B}_2=0$ in \eqref{a1}.

Also note that \eqref{clp3} for $\Gc_3$ specializes to \textbf{\eqref{clp2}} for $\Gc_2$ with $r_2=s_2=0$.
The latter implies $\bs{P_2}=\smat{\bs{P_{22}}&\bs{P_{23}}\\\bs{P_{32}}&\bs{P_{33}}}$ and $\bs{Q_2}=I_{(r_1,s_1)}$ in \eqref{ev3}. Due to \eqref{couplings}, this shows $\bs{P_2}\in\mathbf{P}_p$ in correspondence with \eqref{v22}.

Hence, the partial gain-scheduling problem can be viewed as located between triangular and unstructured gain-scheduling, with the bound for the dimension $|l^c|$ of the scheduling block being reduced from $2|l|$ to $|l|$ if compared to Theorem~\ref{theo:synthesis4}.

\subsection{Synthesis for $\Gc_4$, $\Kc_4$: Zero feedthrough term}\label{syn3}

The plant/controller LFRs (\ref{e1})/(\ref{e2}) are given for $\Gc_4$/$\Kc_4$ by \eqref{clp4}/\eqref{clk4}.
In contrast to Sec.~\ref{syn1}, the control channel carries no particular structure, while the scheduling channel involves (truncated) triangular matrices. Hence, let us take variables for the Lyapunov part as for unstructured design in Sec.~\ref{syn4}, i.e.,
\begin{equation}\label{v12}
X_1:=\bs{X_3}\in\Sb^{n},\qquad Y_1:=\bs{Y_1}\in\Sb^{n},
\end{equation}
while using \eqref{s22c} for $\Xhb$. For the scaling part, this motivates to define $X_2,\,Y_2$ as in Sec.~\ref{syn2}, i.e., the structured variables \eqref{tgs2}-\eqref{ev2} with coupling condition \eqref{couplings}. In analogy to $\Kc_1,\,\Kc_2$ and $\Kc_3$, let us pick the controller variables
\begin{equation}\label{v5}
\mat{ccc}{\bs{K_{11}}&\bs{K_{12}}&\bs{L_1}\\\bs{K_{21}}&\bs{K_{22}}&\bs{L_2}\\\bs{M_1}&\bs{M_2}&\bs{N}}\in\mat{lll}{\R^{|n|\ti |n|}&\Trf^{|n|\ti v}&\R^{|n|\ti k}\\\Tcf^{b\ti |n|}&\T^{b\ti v}& \Tc^{b\ti k}\\\R^{m\ti |n|}&\Tr^{m\ti v}&0}
\end{equation}
for $b,\,v\in\N_0^2$ in \eqref{partitionex} to reflect the sparsity structure of $\Kc_4$.
\begin{thm}\label{theo:synthesis2}
	There exists a structured controller $K(\De)\in\Kc_4$ with $l^c=r^c=s^c$, and $\Xc_1\cg0$, $\Pc\in\mathbf{P}$, such that the closed-loop system (\ref{eq3lifted}), (\ref{delc}) obtained for $G_l(\De)\in\Gc_4^l$ satisfies \eqref{LHi}
	iff there exist Lyapunov variables (\ref{v12}) specifying \eqref{s22c}, multiplier variables \eqref{tgs2}-\eqref{ev2}, and controller variables (\ref{v5}) fulfilling \eqref{SHi} and \eqref{couplings} with  $\Ahb_{ij},\,\Bhb_i,\,\Chb_j,\,\Dhb$ defined by (\ref{s22}) for $N:=\bs{N}$ and
	\begin{equation}\label{v6}
	\begin{aligned}
	L_i&:=\bs{L_i},\ \ M_j:=\bs{M_j},\ \ K_{ij}:=\bs{K_{ij}}\ \text{if}\ (i,j)\neq(2,2),\\
	K_{22}&:=\bs{K_{22}}+\U(X_2^TA_{22}Y_2).
	\end{aligned}
	\end{equation}
	If \eqref{SHi} is satisfied, one can construct $\Xc_1\cg0$, $\Pc\in\mathbf{P}$, and $K(\De)\in\Kc_4$ with an affine triangular scheduling function $\hat{\De}_c(\De)$ such that \eqref{LHi} holds, while the size of the state and scheduling matrix $\Ah_{11}^c$ and $\Ah_{22}^c$ of the controller $K(\De)$ are bounded by $|n|\ti |n|$ and $2|l|\ti 2|l|$, respectively.
\end{thm}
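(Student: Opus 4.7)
The plan is to follow the template of Theorem~\ref{theo:synthesis1} for the Lyapunov-related steps and that of Theorem~\ref{theo:synthesis4} for the scaling-related steps, exploiting the fact that $\Kc_4$ sits between the unstructured class $\Kc_1$ and the fully triangular class $\Kc_3$: its state dynamics are unstructured while only the scheduling channel carries a triangular pattern. Concretely, since $\Ah_{11}^c$ in \eqref{clk4} is unstructured, the Lyapunov factorization proceeds without any sub-partition, producing the unstructured variables \eqref{v12} and $\Xhb$ in the simple form \eqref{s22c}. Since $\Ah_{22}^c\in\T^{s^c\times r^c}$ must be lower-triangular to accommodate the triangular scheduling function $\hat{\De}_c$, the scaling factorization requires the sub-structured Ansatz used in Theorem~\ref{theo:synthesis4}, yielding the variables \eqref{tgs2}-\eqref{ev2} with the coupling \eqref{couplings}.

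For necessity, I would first push $\Xc_1$ and $\Pc$ to the outer factors in \eqref{LHi}, which is admissible thanks to the anti-diagonal block structure enabled by lifting. Then I would: (i)~factorize $\Xc_1 \Yc_1 = \Zc_1$ with unstructured $\Yc_1=\smat{Y_1 & I \\ V_1 & 0}$, $\Zc_1=\smat{I & X_1 \\ 0 & U_1}$, exactly as in Step~1 of the proof of Theorem~\ref{theo:synthesis1}; (ii)~factorize $\Pc\Yc_2=\Zc_2$, but now with $\Yc_2,\Zc_2$ sub-structured to match the refined partition \eqref{Lpartition} of $A_{22}$ and the triangular shape of $\hat{\De}_c$, exactly as in Theorem~\ref{theo:synthesis4}; the diagonal blocks of the congruence-transformed multiplier inequality then encode \eqref{couplings}; (iii)~apply the convexifying parameter transformation analogous to \eqref{s2}. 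The simple form \eqref{v6}, in which only $K_{22}$ carries a correction, appears naturally: the control channel of $\Gc_4/\Kc_4$ has no triangular sub-structure so the analogues of $B_i^e,C_j^e$ from \eqref{a2} vanish, while the triangular interaction $A_{22}\in\T^{s\times r}$ with the sub-structured $X_2,Y_2$ forces the single correction $\U(X_2^T A_{22} Y_2)$ in $K_{22}$.

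For sufficiency, I would mirror the argument of Theorem~\ref{theo:synthesis1}: set $U_1:=I$ to render $\Yc_1,\Zc_1$ invertible and symmetry of $\Yc_1^T\Zc_1$ automatic; perturb $X_2,Y_2$ so that the relevant sub-blocks become invertible while preserving \eqref{SHi} and \eqref{couplings}; construct the triangular scheduling function $\hat{\De}_c$ by solving an equation analogous to \eqref{scheduling} block-by-block, which is possible thanks to the specific coupling pattern built into \eqref{ev3}; and finally invert \eqref{s22} together with \eqref{v6} to recover the controller matrices in $\Kc_4$. The bounds $n^c\leq|n|$ and $|l^c|\leq 2|l|$ reflect, respectively, the unstructured Lyapunov factorization and the doubling inherent to the sub-structured scaling factorization.

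The hardest step will be the scaling factorization together with its perturbation. One must ensure that after sub-structured perturbation $\bs{P_2}$ and $\bs{Q_2}$ remain consistent with the shape \eqref{ev3} and that the invertibility conditions needed to extract the triangular $\hat{\De}_c$ still hold; this runs parallel to, but is more delicate than, Step~2 of the proof of Theorem~\ref{theo:synthesis1}. A secondary concern is verifying that \eqref{couplings} and the $\U(X_2^T A_{22} Y_2)$ correction in \eqref{v6} are genuinely affine in the decision variables, which requires tracking the bold-versus-non-bold pattern of \eqref{ev3} against the triangular shape of $A_{22}$, in analogy with the computation at the end of Sec.~\ref{syn2}.
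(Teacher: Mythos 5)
Your proposal is correct and follows essentially the route the paper intends: the paper omits this proof precisely because it "proceeds along the lines of" Theorem~\ref{theo:synthesis4}, with the Lyapunov factorization degenerating to the unstructured one of Theorem~\ref{theo:synthesis1} and the scaling factorization kept sub-structured, and with \eqref{klmn2} collapsing to \eqref{v6} because $B_i^e=C_j^e=0$ and the degenerate partitions $a=(0,|n|)$, $u=(|n|,0)$ kill all $\U(\cdot)$ corrections except the one in $K_{22}$. Your identification of the delicate points (the perturbation in the scaling factorization and the affineness of \eqref{couplings} and $\U(X_2^TA_{22}Y_2)$) matches the actual technical content of the appendix proof being specialized.
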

Analogously to Sec.~\ref{syn1}, \eqref{klmn2} specializes to \eqref{v6}. Indeed, with
$$
\begin{aligned}
&a=(a_1,a_2):=(0,|n|), &\,u&=(u_1,u_2):=(|n|,0),\\
&k=(k_1,k_2)=(0,k_2), &\,m&=(m_1,m_2)=(m_1,0),
\end{aligned}
$$
we note that the controller variables \eqref{klmn} reduce to \eqref{v5}, we get $B^e_i=0$ and $C^e_j=0$ in \eqref{a2}, and $\U(.)=0$ in \eqref{klmn2} except for $K_{22}$.
The proof of Theorem~\ref{theo:synthesis2} is omitted for brevity since it proceeds along the lines of the one for Theorem~\ref{theo:synthesis4}. Instead, we emphasize that the unstructured control channel for $\Gc_4/\Kc_4$ allows to perform synthesis with a controller of McMillan degree at most $|n|$, if compared to $2|n|$ in Theorem~\ref{theo:synthesis4}.

\subsection{Direct extensions}\label{extensions}

The compact presentation of our framework allows to easily combine results. For instance, if we aim to solve the triangular $\Hz$-gain-scheduling problem, while simultaneously guaranteeing a finite closed-loop $\Hz$-norm, we use the $\Hz$-analogon  of the synthesis inequalities in \eqref{SHi}, i.e.
\begin{equation}\tag{$S\text{-}\Hz$}\label{synin}
\begin{aligned}
&\Lc\left(\smat{0&I\\I&0},\smat{0&I\\I&0},\smat{-\ga I&0\\0&0};\smat{\Ahb_{1j}&\Bhb_1\\\Ahb_{2j}&\Bhb_2\\\Chb_j&\Dhb}\right)\cl0,\\
&\Lc\left(\smat{-\Xhb&0\\0&0},\smat{0&I\\I&0},\smat{0&0\\0&Z^{-1}};\smat{\Ahb_{1j}\\\Ahb_{2j}\\\Chb_{j}}\right)\cl0,
\end{aligned}
\end{equation}
while restricting the plant/controller LFRs (\ref{e1})/(\ref{e2}) to $\Gc_3\cap\Gc_4$/$\Kc_3\cap\Kc_4$.
For instance, $\Kc_3\cap \Kc_4$ means to work with controller matrices that are structured as in
$$
\mat{ccc}{\Ah^c_{11}&\Ah^c_{12}&\Bh^c_1\\\Ah^c_{21}&\Ah^c_{22}&\Bh^c_2\\\Ch^c_1&\Ch^c_2&\Dh^c}\in\mat{ccc}{\T^{n^c\ti n^c}&\T^{n^c\ti r^c}&\T^{n^c\ti k}\\\T^{s^c\ti n^c}&\T^{s^c\ti r^c}& \Tcp^{s^c\ti k}\\\T^{m\ti n^c}&\Trp^{m\ti r^c}&0}
$$
and a triangular scheduling function
$
\hat{\De}_c:\mathbf{V}\to\T^{r^c\ti s^c}
$; the intersected structures for $\Bh^c_2,\,\Ch^c_2$ naturally follow from $\Kc_3$/$\Kc_4$ with \eqref{clk3}/\eqref{clk4} if viewing \eqref{clk4} in the partitions $k,\,m\in\N_0^2$.
In view of the triangular structure in the control and uncertainty channel, we then choose the variables $X_i,\,Y_i$ as for the triangular case in Sec.~\ref{syn2}, while replacing \eqref{klmn} with
$$
\mat{ccc}{\bs{K_{11}}&\bs{K_{12}}&\bs{L_1}\\\bs{K_{21}}&\bs{K_{22}}&\bs{L_2}\\\bs{M_1}&\bs{M_2}&\bs{N}}\in\mat{lll}{\T^{a\ti u}&\T^{a\ti v}&\T^{a\ti k}\\\T^{b\ti u}&\T^{b\ti v}& \Tcp^{b\ti k}\\\T^{m\ti u}&\Trp^{m\ti v}&0}.
$$
The corresponding synthesis result can then be formulated as in Theorem~\ref{theo:synthesis4}.

Furthermore, our synthesis results for partial and triangular gain-scheduling based on the nested configuration in Fig.~\ref{fig0} can be easily generalized to a hierarchical structure consisting of a finite number of an arbitrary number of subsystems. This requires to adapt the projection terms (\ref{klmn22}) and (\ref{klmn2}), respectively, according to the parameter transformation in \cite{roesinger2019a}.
Finally, the lifting procedure and our synthesis results can be directly applied to other performance specifications, such as the generalized $\Hz$-cost based on the analysis result in \cite{scherer2000}.

\subsection{On the relation of original with lifted gain-scheduling}\label{secopt}

The lifting approach permits us to solve structured gain-scheduling synthesis problems beyond
single-objective $\Hi$-design \cite{scherer2000}. For the latter, however, the question arises
whether lifting introduces any conservatism.
For all classes $\Gc_i$/$\Kc_i$, $i=1,\ldots,4$, this is answered in the negative in this section.

\begin{thm}
Let $\ga_{i,\rm\opt}$ denote the optimal $\Hi$-bound that is achievable
for $\Gc_i$/$\Kc_i$ with the class of multipliers in $\mathbf{\hat{P}}$ which satisfy (\ref{ek1}). Similarly,
let $\ga^l_{i,\rm opt}$ be such an optimal $\Hi$-bound for $\Gc_i^l$/$\Kc_i$ with the multiplier class
$\mathbf{P}$. Then $\ga^l_{i,\rm opt}\leq \ga_{i,\rm\opt}$.
\end{thm}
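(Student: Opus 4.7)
The strategy is to transfer any near-optimal original-setting triple into a feasible lifted-setting triple via the Lifting Lemma (Lemma~\ref{theo:untolift}). Fixing $\eps>0$, I pick $K\in\Kc_i$, $\Xc_1\succ0$, and $\hat{\Pc}\in\mathbf{\hat{P}}$ obeying \eqref{ek1} such that $(A\text{-}\Hi)$ holds for the closed loop of $G\in\Gc_i$ with $K$ at the bound $\ga:=\ga_{i,\opt}+\eps$. By zero-padding $\hat{\De}_c$ within $\Kc_i$ (which preserves any triangular pattern and leaves the closed loop unchanged), I may assume that the original scheduling function is square, i.e.\ $r^c=s^c$, matching the dimension requirement of the lifted setting.

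Applying the Lifting Lemma produces a permutation $\Pi$ and the lifted scaling $\Pc:=\tfrac12\Pi^T\hat{\Pc}\Pi$, which satisfies the positivity constraint defining $\mathbf{P}$, and the lifted analysis $(L\text{-}\Hi)$ is fulfilled for the closed loop of $G_l\in\Gc_i^l$ with (the lifted version of) $K$ at the same bound $\ga$. The Lifting Lemma manipulates only the multiplier and the closed-loop matrices, so its proof is insensitive to the sparsity/triangular pattern of $K$ and therefore applies uniformly to the classes $\Kc_1,\dots,\Kc_4$. Note that the direction original~$\Rightarrow$~lifted is the essential content here, whereas the converse direction lifted~$\Rightarrow$~original (which would give the opposite inequality $\ga_{i,\opt}\le\ga^l_{i,\opt}$) is what is used in Theorem~\ref{theo:analysisl}.

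The main obstacle is verifying that the construction yields an admissible lifted-setting triple in $\Gc_i^l/\Kc_i$. For $\Kc_1$ and $\Kc_2$ this is immediate since no triangular constraint is imposed on $\hat{\De}_c$. For $\Kc_3$ and $\Kc_4$ one needs to check that the lifted controller's scheduling block, after an appropriate signal permutation, is of the block-triangular type prescribed by $\T$ in Definition~\ref{struct_not}, and that the remaining state-space matrices inherit the required triangular structure from $K$ through the augmentation by identity and zero blocks introduced in \eqref{e2lift}. Once this admissibility is confirmed, I conclude $\ga^l_{i,\opt}\le\ga_{i,\opt}+\eps$, and sending $\eps\downarrow0$ yields the claim.
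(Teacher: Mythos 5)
Your strategy coincides with the paper's: take a near\mbox{-}optimal triple for the original problem, push it through the Lifting Lemma to obtain a feasible triple for the lifted plant, and observe that the lifted controller is still an admissible element of $\Kc_i$. The gap is that you leave the one genuinely non\mbox{-}trivial step as an unverified claim. For $i=3$ (and $i=4$) the entire content of the theorem beyond Lemma~\ref{theo:untolift} is precisely the admissibility check that you defer with ``one needs to check \dots\ once this is confirmed, I conclude''. The verification is what the paper actually supplies: writing $r^c=(r^c_1,r^c_2)$, $s^c=(s^c_1,s^c_2)$ and using that $\Ah^c_{22}$ and $\hat{\De}_c(\De)$ are $2\ti2$ lower block triangular for $\Kc_3$, one checks that after swapping the second and third block rows/columns --- i.e.\ reordering the lifted scheduling signals into the partition $((r^c_1,s^c_1),(r^c_2,s^c_2))$ --- the matrices $\smas{I_{r^c}&0\\2\Ah^c_{22}&-I_{s^c}}$ and $\smas{-I_{r^c}&2\hat{\De}_c(\De)\\0&I_{s^c}}$ become $2\ti2$ lower block triangular, and that the same reordering keeps the remaining blocks of \eqref{e2lift} (those built from $\Ah^c_{12},\,\Ah^c_{21},\,\Bh^c_2,\,\Ch^c_2$) compatible with \eqref{clk3}. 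The computation is short, but without it the inequality $\ga^l_{i,\rm opt}\le\ga$ is not established for the structured classes, which is exactly where the theorem has content.

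Two smaller points. First, the zero\mbox{-}padding of $\hat{\De}_c$ to force $r^c=s^c$ is unnecessary: the lifted scheduling block $\De_c(\De)$ in \eqref{e2lift} is automatically square of size $r^c+s^c$, so the lifted controller meets the requirement $l^c=r^c=s^c$ of Theorem~\ref{theo:analysisl} with $l^c:=r^c+s^c$ whatever the shape of the original $\hat{\De}_c$. Second, Lemma~\ref{theo:untolift} is stated for the $\Hz$\mbox{-}inequalities \eqref{eqan1}/\eqref{eqan2}; you implicitly invoke its $\Hi$\mbox{-}analogue relating \eqref{AHi} to \eqref{LHi}, which holds by the same multiplier computation but should be stated. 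Your observation that the lemma is insensitive to the sparsity pattern of the controller is correct and is also what the paper relies on.
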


\begin{proof}
It suffices to give the proof for $\Gc_3$/$\Kc_3$. Choose any $\ga>\ga_{i,\rm\opt}$.
Then there exist $\Xc_1\cg 0$, $\hat{\Pc}\in\mathbf{\hat{P}}$ with (\ref{ek1})
and a controller $K(\De)\in\Kc_3$ which achieve \eqref{AHi} for the interconnection with the original plant $G(\De)\in\Gc_3$.
By the Lifting Lemma~\ref{theo:untolift} and if recalling the definition of $\mathbf{P}$ in \eqref{lscalings}, there exists a multiplier $\Pc\in\mathbf{P}$ such that \eqref{LHi} holds for the same $\ga$ and for the lifted closed loop system \eqref{eq5}-\eqref{eq3lifted}.

It is now crucial to observe that the lifted structured controller \eqref{e2lift} can be again considered as an element of $\Kc_3$. Indeed, if recalling the structures $2\Ah^c_{22}=\smat{\bu&0\\\bu&\bu}$, $2\hat{\De}_c(\De)=\smat{\bu&0\\\bu&\bu}$, and $r^c=(r^c_1,r^c_2)$, $s^c=(s^c_1,s^c_2)$, the matrices $\smat{I_{r^c}&0\\2\Ah^c_{22}&-I_{s^c}}$ and $\smat{-I_{r^c}&2\hat{\De}_c(\De)\\0&I_{s^c}}$ read as
$$
\mat{cc;{2pt/1pt}cc}{I_{r^c_1}&0&0&0\\\bu&-I_{s^c_1}&0&0\\\hdl 0&0&I_{r^c_2}&0\\\bu&0&\bu&-I_{s^c_2}}
\quad\text{and}\quad
\mat{cc;{2pt/1pt}cc}{-I_{r^c_1}&\bu&0&0\\0&I_{s^c_1}&0&0\\\hdl 0&\bu&-I_{r^c_2}&\bu\\0&0&0&I_{s^c_2}}
$$
after swapping the second and third block row/column. This illustrates that a simple permutation of the signals in the scheduling channel of \eqref{e2lift} leads to a description of the lifted controller
with a $2\ti 2$ lower block triangular structure as required for elements in $\Kc_3$ according to \eqref{clk3}.

Therefore, the performance bound $\ga$ can indeed be achieved for some $\Pc\in\mathbf{P}$ and
some $K(\De)\in\Kc_3$ interconnected with the lifted plant \eqref{eq5}, which shows $\ga^l_{i,\rm opt}\leq \ga$. Since $\ga$ with $\ga>\ga_{i,\rm\opt}$ was arbitrary, the proof is finished.
\end{proof}

This result for the class $\Gc_1$/$\Kc_1$ and the $\Hi$-cost shows that the lifting approach is not more conservative than the one proposed in \cite{scherer2000}. Moreover, even if it was possible to convexify gain-scheduled synthesis for the classes $\Gc_2$/$\Kc_2$, $\Gc_3$/$\Kc_3$ or the $\Hz$-cost directly, we can still conclude that lifting would not lead to more conservative results.

\section{Numerical example}\label{secnum}

Let us consider the nested configuration in Fig.~\ref{fig0} for triangular gain-scheduling, i.e., also the outer loop is scheduled as $P_2=P_2(\De)$, $C_2=C_2(\De)$. If $\Delta\in[-1,1]$, let $P_1(\De)$ and $P_2(\De)$ be given by
$$
\begin{aligned}
\smat{\hat{z}_1\\[0.5ex]\xi\\[0.5ex]y_1}&=\tfrac{1}{s-2}\smat{0.9s-0.8\,&s-1\\[0.5ex]1&1\\[0.5ex]s&2}\smat{\hat{w}_1\\[0.5ex]u_1},\
\hat{w}_1=\De\hat{z}_1 \quad \te{and}\\
\smat{\hat{z}_2\\[0.5ex]y_2}&=\tfrac{1}{s-1}\smat{-0.9s+3.9\,&0.6s+2.4&0\\[0.5ex]-s+4&3&s-1}\smat{\hat{w}_2\\[0.5ex]u_2\\[0.5ex]\xi},\
\hat{w}_2=\De\hat{z}_2,
\end{aligned}
$$
respectively. By proceeding as in Sec.~\ref{classes1} for partial gain-scheduling, this leads to the system description
$$
\renewcommand{\arraystretch}{1.1}
\mat{c}{\dot{x}\\\hline\hat{z}\\\hdl y}\,{=}\,\mat{c|c;{2pt/1pt}c}{\Ah_{11}&\Ah_{12}&\Bh_1\\\hline \Ah_{21}&\Ah_{22}&\Bh_2\\\hdl \check C_1&\check C_2&\check D}\mat{c}{x\\\hline \hat{w}\\\hdl u}\\
\,{:=}\,\renewcommand{\arraystretch}{1.0}
	\scalebox{0.77}{$\mat{rr|rr;{2pt/1pt}rr}{2&0&1&0&1&0\\0&1&0&1&0&1\\\hline 1&0&0.9&0&1&0\\0&3&0&-0.9&0&0.6\\\hdl 2&0&1&0&0&0\\1&3&0&-1&0&0}$}
	\renewcommand{\arraystretch}{1.1}
\mat{c}{x\\\hline \hat{w}\\\hdl u}
$$
with $\hat{w}=\hat\Delta(\Delta)\hat{z}$ for $\hat\Delta(\Delta):=\smat{\Delta&0\\0&\Delta}$ which belongs to the class $\Gc_3$ with $n=r=s=m=k=(1,1)$, $m^p=k^p=0$.

Based on Matlab's Robust Control Toolbox, let us perform a standard S/KS-design, i.e., an $\Hi$-design for bounding the weighted sensitivity and control sensitivity transfer matrices $S$ and $KS$, respectively. To this end, we consider the plant
$$
\mat{c}{\dot{x}\\\hline \hat{z}\\\hdl e\\u\\\hdl y}=\matl({c|c;{2pt/1pt}c;{2pt/1pt}c}{
	\Ah_{11}&\Ah_{12}&0&\Bh_1\\\hline
	\Ah_{21}&\Ah_{22}&0&\Bh_2\\\hdl
	-\check C_1&-\check C_2&I&-\check D\\
	0&0&0&I\\\hdl
	-\check C_1&-\check C_2&I&-\check D})\mat{c}{x\\\hline\hat{w}\\\hdl w_p\\\hdl u},\quad
\hat{w}=\mat{cc}{\De&0\\0&\De}\hat{z}
$$
with the performance output $z_p:=\col(e,u)$ which collects the tracking error $e$ and the control input $u$. The corresponding weights are taken as
$w_e(s):=\frac{s+3}{2s + 9\cdot 10^{-4}}$ and $w_u(s):=\frac{s+2}{10^{-2}s+10}\smat{1&2}$, respectively.
The resulting weighted plant is the LFR $G(\De)\in\Gc_3$ used for synthesis.

The goal is the construction of a triangular gain-scheduled controller $K_t(\De)\in\Kc_3$ solving Problem~\ref{problem2}. This is compared to an unstructured gain-scheduled controller $K_u(\De)\in\Kc_1$
which results from viewing $G(\De)$ as an unstructured plant in $\Gc_1$. We compute the optimal $L_2$-gain bound $\ga_\opt$ based on the LMIs in Theorem~\ref{theo:synthesis4},
whose implementation is facilitated by using \cite{lofberg2004} in conjunction with our description of the structured variables and the constraints. The corresponding controller is determined for the increased bound $1.05\,\ga_\opt$ to improve the numerical conditioning of the controller construction.

By using the solver MOSEK, the design for $K_u(\De)$ guarantees an $L_2$-gain bound of $\ga_\opt=6.58$, while that for $K_t(\De)$ leads to the higher value of $\ga_\opt = 11.11$. The difference is the prize to be paid for designing a triangular controller.

The resulting closed-loop frequency responses of $w_p\to e$ are shown in Fig.~\ref{fe} for the triangular (full) and the unstructured (dashed) design, if $\Delta$ takes the values $-1$ (purple), $0$ (blue) and $1$ (red), respectively. The inverse weight $w_e$ (scaled with $\ga_\opt = 11.11$) is shown as a black dashed-dotted curve, while the responses of $w_p\to u$ are omitted for brevity.
The associated controller magnitude responses for $K_t(\De)$ (full) and $K_u(\De)$ (dashed) are given in Fig.~\ref{fk} and exhibit the triangular controller structure of $K_t(\De)$.

The highly varying characteristics of the underlying open-loop plant (not shown to save space) is reflected by the fact that the controller gains in the (2,2)/(1,1)-components change from higher/lower to lower/higher values (for lower frequencies) when moving the parameter from $\De=-1$ (purple) to $\De=1$ (red). This induces the opposite behavior for the sensitivity plots in Fig.~\ref{fe}. Apart from and despite the fact that the structured design achieves an exact decoupling of outer loop references from the inner loop, both designs exhibit similar characteristics in terms of their sensitivity magnitude plots.

In Fig.~\ref{ftrack}, we compare the time-domain tracking behavior for $K_t(\De)$ with that for $K_u(\De)$ with $\De(t):=\sin(\pi t/100)$ (black, dashed) and $w_p=\col(0,r_s)$ for $r_s(t)\,{:=}\,0.5\,\texttt{square}(2\pi t/25)$ (black, full)
generated by the Matlab command \texttt{square}. Note that the reference only acts on $P_2(\De)$ in the outer loop (see  Fig.~\ref{fig0}) and Fig.~\ref{ftrack} depicts the two outputs $y_1$ (upper) and $y_2$ (lower) for $K_t(\De)$ (blue) and for $K_u(\De)$ (red), respectively.

Both designs lead to a similar tracking behavior, while the exact suppression of the outer loop reference signal in the inner loop is violated for $K_u(\De)$ (upper, red). 
These plots once again reflect the
change of the characteristics of $P_2(\Delta)$ from an unstable minimum-phase ($\Delta=-1$) to a non-minimum-phase
($\Delta=1$) system, in accordance with the different peaking behavior of the (2,2) block of the corresponding sensitivity transfer function in Fig.~\ref{fe}.

Let us finally compare these designs with those for
partial gain-scheduling where $P_2$ is taken as $P_2(0)$
and $C_2$ is a $\De$-independent controller in Fig.~\ref{fig0}. 
The resulting frequency response plots in Fig.~\ref{fe} (dotted) exhibit a closed-loop system
which match those for $K_t(\De)$ in the (1,1) block for the parameters $\Delta\in\{-1,0,1\}$,
while they resemble those for $K_t(0)$ (blue) in the (2,2) block.

\begin{rema}
The lifting approach goes through if replacing $\hat{w}=-\hat{w}+2\hat{\De}(\De)\hat{z}$ with $\hat{w}=-\mu\hat{w}+(\mu+1)\hat{\De}(\De)\hat{z}$ for some parameter $\mu>0$ and adapting the lifted block\,/\,LFR in \eqref{liftedblock}/\eqref{eq5} accordingly. However, this 
modification causes no improvements in our numerical example.
\end{rema}

\begin{figure}
	\begin{center}
		\includegraphics[trim=62 0 66 24, clip, height=0.28\textwidth]{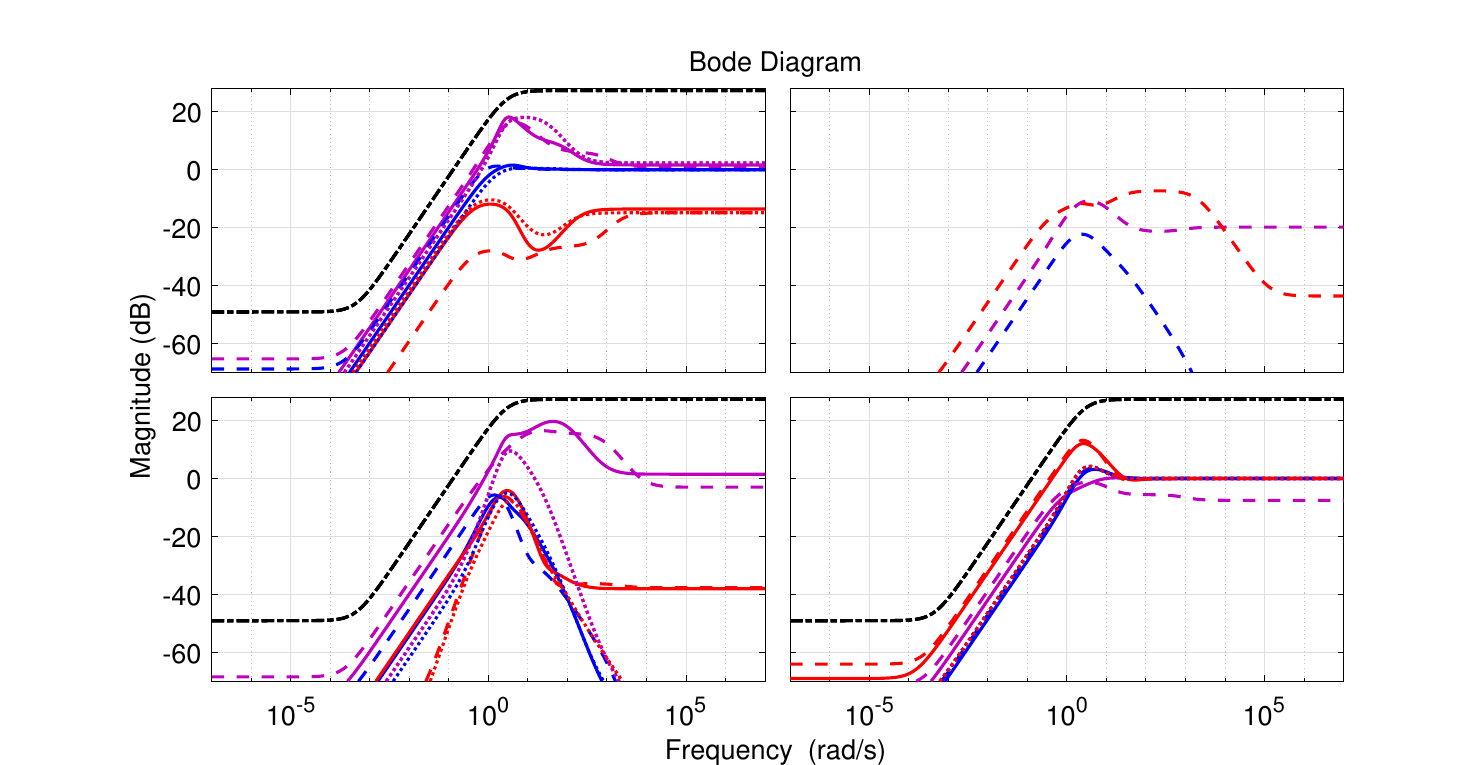}
		\caption{Closed-loop frequency magnitude responses of $w_p\to e$ for triangular (full), unstructured (dashed), partial (dotted) gain-scheduling computed for $\De\,{=}\,-1$ (purple), $\De\,{=}\,0$ (blue) and $\De\,{=}\,1$ (red), and the scaled inverse weight for the triangular design (black, dashed-dotted).}
		\label{fe}
	\end{center}
\end{figure}
\begin{figure}
	\begin{center}
		\includegraphics[trim=62 0 67 24, clip, height=0.28\textwidth]{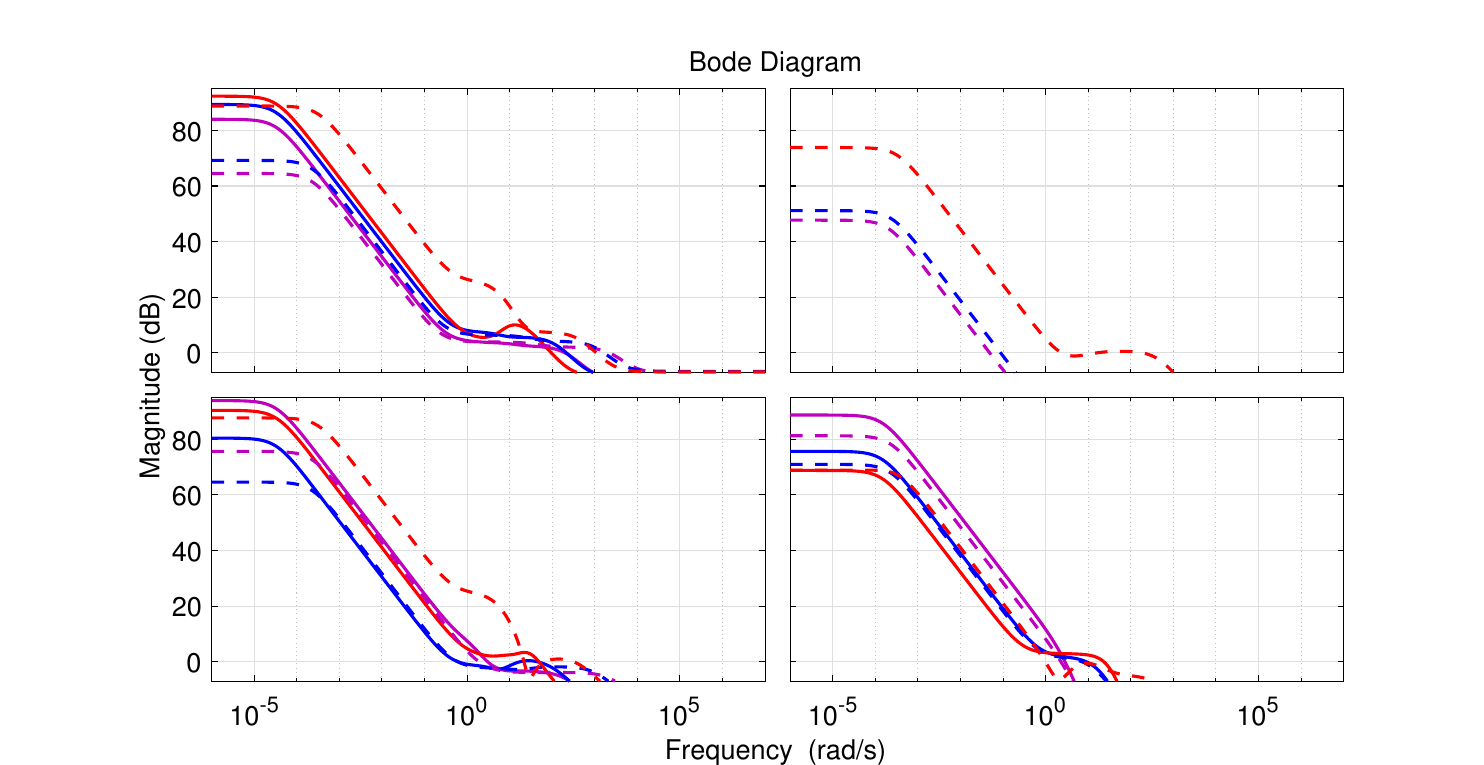}
		\caption{Frequency magnitude responses of the triangular gain-scheduled controller $K_t(\De)$ (full) and the unstructured gain-scheduled controller $K_u(\De)$ (dashed) computed for $\De=-1$ (purple), $\De=0$ (blue) and $\De=1$ (red).}
		\label{fk}
	\end{center}
\end{figure}

\section{Conclusions}
We have presented a lifting technique that allows us to develop a direct synthesis framework for nested gain-scheduling with multiple objectives using full block scalings. Based on a novel structured factorization of these scalings, this approach is shown to be successful for solving the partial and triangular gain-scheduling problem related to a nested inner and outer loop configuration. A future goal is the extension to more complicated hierarchical structures and the investigation of numerical advantages of the lifting technique over existing LMI controller design approaches.

\appendix
\section*{Proof of Theorem~\ref{theo:synthesis4}}
	\textit{Necessity.}
	Suppose that \eqref{LHi} in Theorem~\ref{theo:analysisl} holds with some $\Xc_1\cg0$, some scaling $\Xc_2:=\Pc\in\mathbf{P}$, and for the closed-loop system \eqref{eq3lifted}, (\ref{delc}) obtained for the lifted LFR $G_l(\De)\in\Gc_3^l$ and some controller $K(\De)\in\Kc_3$ with $l^c=r^c=s^c$. We infer
	\begin{equation}\label{Lfactor}
	\Lc\left(\smat{0&I\\I&0},\smat{0&I\\I&0},\smat{-\ga I&0\\0&I};\smat{\Xc_1\Ac_{1j}&\Xc_1\Bc_1\\\Xc_2\Ac_{2j}&\Xc_2\Bc_2\\\Cc_j&\Dc}\right)\cl0.
	\end{equation}
	In the first steps, we properly factorize $\Xc_i$ according to \eqref{fac}
	where the structures of $X_i,\,Y_i,\,U_i,\,V_i$ depend on $\Gc_3,\,\Kc_3$ and on the structure of the lifted LFR.

	\textbf{Step 1} (\textit{Factorization of Lyapunov matrix $\Xc_1$}).\\
	In $\Gc_3$/$\Kc_3$, the plant/controller matrices related to the control channel are triangular, which motivates to use the factorization in \cite{scherer2013}, \cite{scherer2014} for convexification.
	By assuming w.l.o.g. that $n^c_1\geq |n|$ and $n^c_2\geq |n|$, it is shown in these papers that there exists a factorization (\ref{fac}) for $i=1$ such that $\Yc_1$ has full column rank and such that the factors admit the structure
	\begin{equation}\label{facl1}
	\mat{c}{Y_1\\\hdl V_1}=\mat{cc}{
		\bs{Y_1}&\bs{Y_2}\\\hdl \bu&0\\ \bu&\bu},\quad
	\mat{c}{X_1\\\hdl U_1}=\mat{cc}{\bs{X_2}&\bs{X_3}\\\hdl \bu&\bu\\0&\bu}
	\end{equation}
	with variables defined as in \eqref{tgs1}-\eqref{ev1}, and with triangular matrices $V_1\in\T^{n^c\ti u}$, $U_1^T\in\T^{a\ti n^c}$ with $u,\,a$ in (\ref{partitionex}).
	As in Step~1 of the proof of Theorem~\ref{theo:synthesis1}, we get $\Zc_1^T\Yc_1\cg0$. By symmetry, the latter reads as $\Xhb\cg0$ with (\ref{couplingt}).

\begin{figure}
	\begin{center}
		\includegraphics[trim=45 2 40 10, clip, width=0.497\textwidth,height=0.264\textwidth]{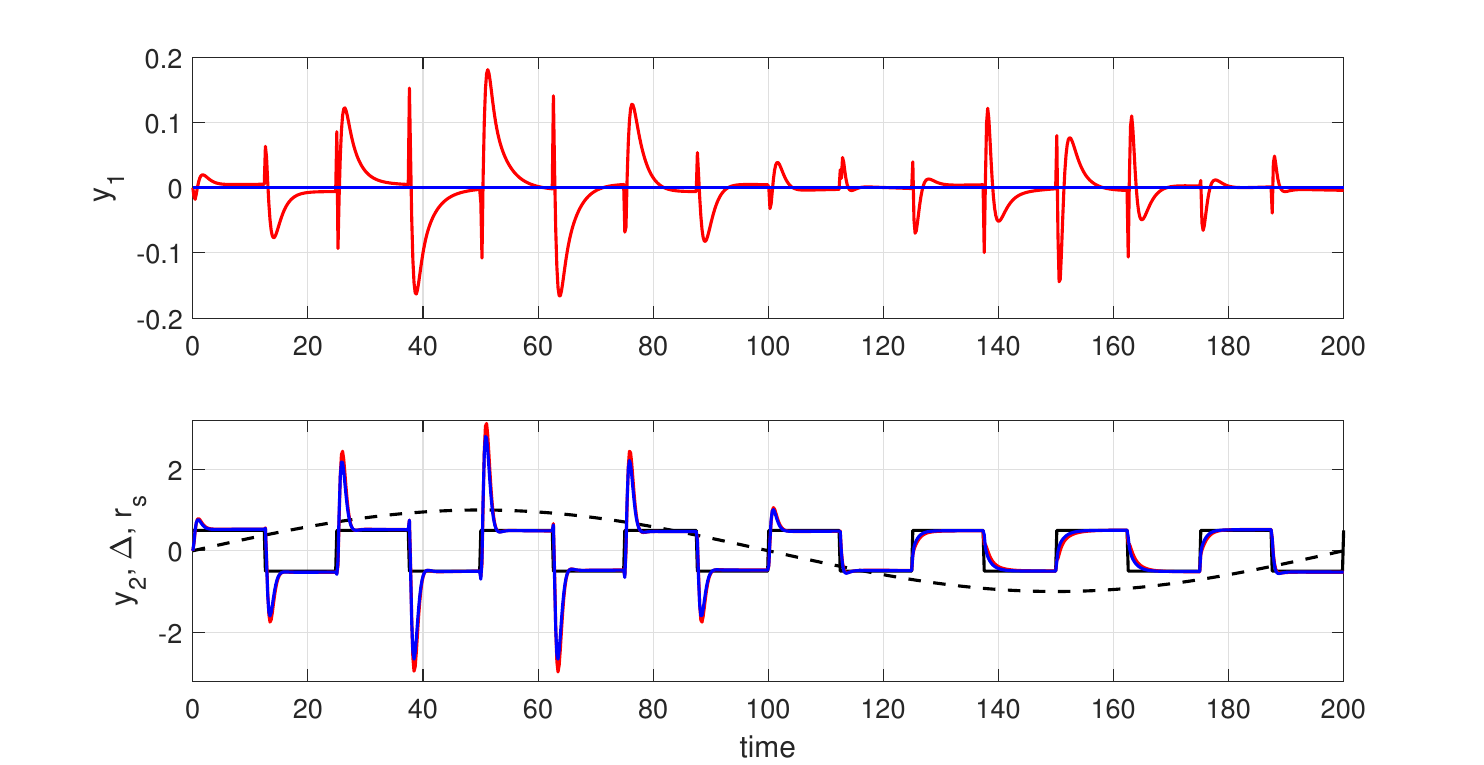}
		\caption{
		Responses $y_1$ (upper) and $y_2$ (lower) for gain-scheduling with $K_t(\De)$ (blue) and $K_u(\De)$ (red). The lower plot contains the scheduling signal $\De$ (black, dashed) and the square reference $r_s$ (black, full).
		}
		\label{ftrack}
	\end{center}
\end{figure}

	\textbf{Step 2} (\textit{Factorization of scaling matrix $\Xc_2$}).\\
	For $\Xc_2$, we construct a novel factorization which is shown to be the right choice for $\Gc_3$/$\Kc_3$.
First, if $\varepsilon>0$, note that the LFR of $K(\De)$ with its specific structure in \eqref{clk3} does not change if increasing the size of the scheduling channel by diagonally extending  $\Ah^c_{22}$ with $-\varepsilon I$, $\hat{\De}_c(\De)$ with $\varepsilon I$, and $\Xc_2$ with $\varepsilon I$, while properly increasing $\Ah^c_{12},\,\Ah^c_{21},\,\Bh^c_2,\,\Ch^c_2$ by zero blocks.
Therefore, we can assume w.l.o.g. that $l^c_1\geq |l|$ and $l^c_2\geq |l|$
for the components of $l^c=(l^c_1,l^c_2)$ defining the partition of the controller's scheduling block,
while \eqref{LHi} and $\Xc_2\in\mathbf{P}$ remain to be true. Then we can partition $\Xc_2$ according to $(|l|,(l^c_1,l^c_2))=(|l|,l^c)$ as
	\begin{equation}\label{X2}
	\Xc_2
	=\mat{c;{2pt/1pt}cc}{
		\bs{P_3}&S_{13}^T&S_{23}^T\\\hdl
		S_{13}&Z_{11}&Z_{12}\\
		S_{23}&Z_{21}&Z_{22}}=:\mat{c;{2pt/1pt}c}{\bs{P_3}&S_1^T\\\hdl S_1&Z_1}.
	\end{equation}
We first target at the factorization
	\begin{equation}\label{fact}
	\Xc_2\mat{cc;{2pt/1pt}c}{T_1&T_2&I_{l}\\\hdl
		T_{11}&0&0\\T_{21}&T_{22}&0}
	=\mat{c;{2pt/1pt}cc}{I_{l}&I_{l}&\bs{P_3}\\\hdl
		0&\t{S}_{12}&\t{S}_{13}\\0&0&\t{S}_{23}}.
	\end{equation}
	Since \eqref{LHi} and the scaling inequality $\He\left[\Xc_2\De_{lc}(\De)\right]\cg\nolinebreak0$ for $\De\in\mathbf{V}$ are strict and since $\mathbf{V}$ is compact, we can successively infer invertibility of $Z_{22},\,Z_1$ by
perturbing $Z_{22},\,Z_{11}$ such that \eqref{LHi} and $\Xc_2\in\mathbf{P}$ stay valid.\\
Further, by partitioning $\t{Z}_1:=Z_1^{-1}$ as $Z_1$ in \eqref{X2}, the block-inversion formula shows invertibility of $\t{Z}_{jj}$ for $j=1,2$.
	In particular, since $S_{j3}$ is tall due to $l^c_j\geq|l|$ and since $\t{Z}_{jj}$ is invertible,
we can successively perturb $S_{23},\,S_1$ such that
	\begin{equation}\label{rank}
	H_1:=-\mat{cc}{I_{l^c_1}&0}\t{Z}_1S_1
	\quad\text{and}\quad
	H_2:=-\t{Z}_{22}S_{23}
	\end{equation}
    have full column rank and \eqref{LHi}, $\Xc_2\in\mathbf{P}$ stay true.
Moreover, perturbing $\bs{P_3}$ allows to infer invertibility of
	\begin{equation}\label{perturbation}
	\bs{P_3}-S_1^T\t{Z}_1S_1\quad \text{and}\quad \bs{P_3}-S_{23}^T\t{Z}_{22}S_{23}
	\end{equation}
	without violating \eqref{LHi} and $\Xc_2\in\mathbf{P}$.
E.g., with the original $\bs{P_3}$
we can choose $\eps\in\R$ arbitrarily close to zero such that
\begin{equation}\label{perturbation2}
-\eps\notin\eig(\bs{P_3}-S_1^T\t{Z}_1S_1)
\cup\eig(\bs{P_3}-S_{23}^T\t{Z}_{22}S_{23})
\end{equation}
and then replace $\bs{P_3}$ with $\bs{P_3}+\varepsilon I$.\\
  Invertibility of (\ref{perturbation}) and of $Z_1,\,Z_{22}$ implies the same for $\Xc_2$ and $\smat{\bs{P_3}&S_{23}^T\\ S_{23}&Z_{22}}$, which allows us to define
	\begin{equation}\label{Yj}
	\mat{c}{T_1\\\hdl T_{11}\\T_{21}}:=\Xc_2^{-1}\mat{c}{I_l\\\hdl 0\\ 0},\ \
	\mat{c}{T_2\\\hdl T_{22}}
	:=\mat{c;{2pt/1pt}c}{\bs{P_3}&S_{23}^T\\\hdl S_{23}&Z_{22}}^{-1}
	\mat{c}{I_l\\\hdl 0}.
	\end{equation}
	This leads to \eqref{fact} with suitable blocks $\t{S}_{ij}$.

Next we show that there exists $\bs{P_2},\,\bs{Q_2}$ as in \eqref{ev3} satisfying
\begin{equation}\label{ev4}
	T_2\bs{P_2}=\bs{Q_2}.
	\end{equation}
	For this purpose, let $\Pi\in\R^{l\ti l}$ be the permutation matrix
for which the second and third block row of
	$$
	\Pi T_2 \quad\te{and}\quad \Pi\left[\bs{P_3}-S_{23}^T\t{Z}_{22}S_{23}\right]
	$$ are swapped in the partition $l=(r_1,r_2,s_1,s_2)$, and define
	\begin{equation}\label{defnsub}
	\smat{\check{Q}_1&\check{Q}_2^T\\\check{Q}_2&\check{Q}_3}:=\Pi T_2\Pi^T,\
	\smat{\check{P}_1&\check{P}_2^T\\\check{P}_2&\check{P}_3}:=\Pi\left[\bs{P_3}-S_{23}^T\t{Z}_{22}S_{23}\right]\Pi^T
	\end{equation}
	of dimension $((r_1,s_1),(r_2,s_2))$. Let us observe that by adjoining $\eig(\check{P}_3)$ to the right-hand side of \eqref{perturbation2}, \eqref{defnsub} allows to modify the above perturbation of $\bs{P_3}$ in order to achieve invertibility of \eqref{perturbation} and of $\check{P}_3$.
	Further, due to \eqref{Yj}, the block-inversion formula shows $T_2=\bigl[\bs{P_3}-S_{23}^T\t{Z}_{22}S_{23}\bigr]^{-1}$ and thus $\smat{\check{Q}_1&\check{Q}_2^T\\\check{Q}_2&\check{Q}_3}=\smat{\check{P}_1&\check{P}_2^T\\\check{P}_2&\check{P}_3}^{-1}$. Hence, $\check{Q}_1=\bigl[\check{P_1}-\check{P_2}^T\check{P_3}^{-1}\check{P_2}\bigr]^{-1}$ is invertible by construction.
By inspection we get
	\begin{equation}\label{ev5}
	\Pi T_2\Pi^T \smat{\check{Q}_1^{-1}&-\check{Q}_1^{-1}\check{Q}_2^T\\0&I}=\smat{I&0\\\check{Q}_2\check{Q}_1^{-1}&\check{Q}_3-\check{Q}_2\check{Q}_1^{-1}\check{Q}_2^T}
	\end{equation}
	and we can choose
	$$
	\begin{aligned}
	\smat{\bs{P_{22}}&\bs{P_{23}}\\\bs{P_{32}}&\bs{P_{33}}}&:=\check{Q}_1^{-1}
	,\quad
	\smat{\bs{Q_{22}}&\bs{Q_{23}}\\\bs{Q_{32}}&\bs{Q_{33}}}:=\check{Q}_3-\check{Q}_2\check{Q}_1^{-1}\check{Q}_2^T
	,\\
	\smat{\bs{R_{22}}&\bs{R_{23}}\\\bs{R_{32}}&\bs{R_{33}}}&:=-\check{Q}_2\check{Q}_1^{-1}
	\end{aligned}
	$$
as in (\ref{ev2}) to define $\bs{P_2},\,\bs{Q_2}$ through (\ref{ev3}). Clearly, (\ref{ev5}) transforms into (\ref{ev4}) after left/right-multiplication with $\Pi^T/\Pi$.

Setting $\bs{Q_1}:=T_1$ and right-multiplying the second block column of (\ref{fact}) with $\bs{P_2}$ while using (\ref{ev4}) leads to  the factorization
of $\Xc_2$ as in (\ref{fac}) for $i=2$ and with
\begin{equation}\label{fac31}
	\mat{c}{Y_2\\\hdl V_2}=
	\mat{cc}{
		\bs{Q_1}&\bs{Q_2}\\\hdl \bu&0\\ \bu&\bu},\ \
	\mat{c}{X_2\\\hdl U_2}=\mat{cc}{\bs{P_2}&\bs{P_3}\\\hdl \bu&\bu\\0&\bu}.
	\end{equation}
By construction, note that (\ref{tgs2})-(\ref{ev2}) hold and that the partitions of
$V_2\in\T^{l^c\ti v},\, U_2^T\in\T^{b\ti l^c}$ can be expressed with $v,b$ in (\ref{partitionex}).

Let us convince ourselves that $\Yc_2$ has full column rank.
	By construction, $H_j$ in (\ref{rank}) has full column rank. By the block-inversion formula, (\ref{Yj}) implies that $T_j$ is invertible and, hence, $T_{jj}=H_jT_j$ has full column rank as well. Therefore, this holds for the right-factor of $\Xc_2$ in (\ref{fact}). Since $\bs{P_2}$ is as well invertible, we conclude that $\Yc_2$ has full column rank.

	\textbf{Step 3} (\textit{Elimination of scheduling function $\hat{\De}_c$}).\\
	As in Step~3 of the proof of Theorem~\ref{theo:synthesis1}, we omit the arguments of $\De_l(.)$, $\hat{\De}_c(.)$, $\De_{lc}(.)$ and infer (\ref{exdec}) from $\Xc_2\in\mathbf{P}$.
	With the partitions of $X_2,\,Y_2$ in (\ref{fac31}), let us note that
	\begin{equation}\label{subblocks}
	\He\left[\mat{c}{I\\\hdl X_2^T}\De_l\mat{c;{2pt/1pt}c}{Y_2&I}\right]
	=:\He\mat{cc;{2pt/1pt}c}{\De_l\bs{Q_1}&0&0\\\hdl
			J_{21}&\bs{P}^T_{\hspace{-0.5ex}\bs{2}}\De_l\bs{Q_2}&0\\
			J_{31}&J_{32}&\bs{P}^T_{\hspace{-0.5ex}\bs{3}}\De_l}
	\end{equation}
	holds with blocks $J_{ij}=J_{ij}(\De_l,\De_l^T)$ that depend on $\De_l$ and $\De_l^T$, respectively. Moreover, since $U_2,\,V_2,\,\hat{\De}_c$ are triangular, we infer $U_2^T\hat{\De}_cV_2\in\T^{b\ti v}$. This implies that the diagonal blocks of
		\eqref{exdec} are exactly given by those of \eqref{subblocks}, and hence
	\begin{equation}\label{diagonal}
	\He\left[\De_l\bs{Q_1}\right]\cg0,\ \He\left[\bs{P}^T_{\hspace{-0.5ex}\bs{2}}\De_l\bs{Q_2}\right]\cg0,\
	\He\left[\bs{P}^T_{\hspace{-0.5ex}\bs{3}}\De_l\right]\cg0.
	\end{equation}
	Therefore, we conclude \eqref{couplings}, while $\bs{Q_1}\in \mathbf{P}_d$ and $\bs{P_3}\in\mathbf{P}_p$ follow from \eqref{diagonal} as shown in Step~3 of the proof of Theorem~\ref{theo:synthesis1}.

	\textbf{Step 4} (\textit{Convexifying parameter transformation}).\\
	Since $\Yc_i$ has full column rank, let us use congruence transformations with $\Yc_i/\Yc_i^T$ on the $i$-th block-column/-row of \eqref{Lfactor} together with the factorization (\ref{fac}) to infer
	$$
	\Lc\left(\smat{0&I\\I&0},\smat{0&I\\I&0},\smat{-\ga I&0\\0&I};\smat{\Zc_1^T\Ac_{1j}\Yc_j&\Zc_1^T\Bc_1\\\Zc_2^T\Ac_{2j}\Yc_j&\Zc_2^T\Bc_2\\\Cc_j\Yc_j&\Dc}\right)\cl0.
	$$
	As in Step~4 of the proof of Theorem~\ref{theo:synthesis1}, we can express the matrices in the outer factor as (\ref{s19})
	after performing the substitution (\ref{s2}). We observe that $N=\hat{D}^c=:\bs{N}$ already has the desired structure (\ref{klmn}). Moreover, let us define
	\begin{equation}\label{s3}
	\bs{K_{ij}}:=\Lo(K_{ij}),\ \bs{L_i}:=\Lo(L_i),\ \bs{M_j}:=\Lo(M_j)
	\end{equation}
	to trivially infer the structure (\ref{klmn}).
	
	In order to finish the necessity proof, it remains to show \eqref{klmn2}. In view of $K(\De)\in\Kc_3$ with (\ref{clk3}), all controller matrices are lower block-triangular. The same is true for $U_i^T,\,V_j$ by construction. This implies
	\begin{equation}\label{s4}
	\U(U_i^T\hat{A}_{ij}^cV_j)=0,\ \U(U_i^T\hat{B}^c_i)=0,\
	\U(\hat{C}^c_jV_j)=0.
	\end{equation}
	Hence, by exploiting (\ref{s3}), (\ref{s4}), and linearity of $\U(.)$, we can express (\ref{s2}) as
	\begin{equation}\label{s5}
	\begin{aligned}
	L_i&=\bs{L_i}+\U(L_i)=\bs{L_i}+\U(X_i^TB_i\bs{N}),\\
	M_j&=\bs{M_j}+\U(M_j)=\bs{M_j}+\U(\bs{N}C_jY_j),
	\end{aligned}
	\end{equation}
	and $K_{ij}=\bs{K_{ij}}+\U(K_{ij})$. This yields
	\begin{equation}\label{s6}
	\begin{aligned}
	K_{ij}&=\bs{K_{ij}}+\U(X_i^TA_{ij}Y_j)+\U(X_i^TB_iM_j)+\\
	&\phantom{=}+\U(L_iC_jY_j)-\U(X_i^TB_i\bs{N}C_jY_j).
	\end{aligned}
	\end{equation}

	Let us next show that the non-linear expressions (\ref{s5}), (\ref{s6}) can indeed be turned into (\ref{klmn2}) for the new decision variables.
	As a consequence of the particular structure of \eqref{ev1}, \eqref{a1}, we infer
	\begin{equation}\label{r1}
	\bs{X}^T_{\bs{2}}\t B_1=\t B_1\quad\text{and}\quad \t C_1\bs{Y_2}=\t C_1
	\end{equation}
	by a direct computation. At this point it is crucial to see, with the structure of the lifted LFR (\ref{eq5}), that we can introduce the partitions $B_2\in\T^{(|r|+s_1,s_2)\ti m}$ and $C_2\in\T^{k\ti (r_1,r_2+|s|)}$. This directly shows
	$$
	\t B_2=\smat{0\\\bu}\in\R^{(|r|+s_1,s_2)\ti m_2}\ \text{and}\ \t C_2=\smat{\bu&0}\in\R^{k_1\ti(r_1,r_2+|s|)}
	$$
	in \eqref{a1}. By exploiting the structure of $\bs{P_2},\,\bs{Q_2}$ in \eqref{ev3}, this implies the following relations in analogy to \eqref{r1}:
	\begin{equation}\label{r1new}
	\bs{P}^T_{\bs{2}}\t B_2=\t B_2\quad\text{and}\quad \t C_2\bs{Q_2}=\t C_2.
	\end{equation}
	Now, due to $B_i^e,\,C_j^e$ in \eqref{a2}, we infer from \eqref{r1} and \eqref{r1new} that
	$$
	\U(X_i^TB_i)=B_i^e\quad\text{and}\quad \U(C_jY_j)=C_j^e
	$$
	by a direct computation. This leads to the key relations
	\begin{equation}\label{r2}
	X_i^TB_i=B_i^e+\Lo(X_i^TB_i)\ \text{and}\ C_jY_j=C_j^e+\Lo(C_jY_j).
	\end{equation}
	Analogously to nominal $p$-block triangular synthesis [\citen{roesinger2019a}, eqs. (22)-(24)], it suffices to apply \eqref{r2} and projection rules for $\U,\,\Lo$ to express the projection terms in \eqref{s5}, \eqref{s6} as
	\begin{equation}\label{r3}
	\begin{aligned}
	\U(X_i^TB_i\bs{N})&=\U(B_i^e\bs{N}),\\
	\U(\bs{N}C_jY_j)&=\U(\bs{N}C_j^e),
	\end{aligned}
	\end{equation}
	and
	\begin{equation}\label{r4}
	\begin{aligned}
	&\U(X_i^TB_iM_j)+\U(L_iC_jY_j)-\U(X_i^TB_i\bs{N}C_jY_j)=\\
	&\hspace{0ex}=\U(B_i^e\bs{M_j})+\U(\bs{L_i}C_j^e)-\U(B_i^e\bs{N}C_j^e).
	\end{aligned}
	\end{equation}
	Due to $\U(B_i^e\bs{N}C_j^e)=B_i^e\bs{N}C_j^e$, we immediately infer \eqref{klmn2}. In order to avoid duplications, let us only display this mechanism for $\U(\bs{N}C_jY_j)$.
	Indeed, by using \eqref{r2} combined with the linearity of $\U(.)$, the simple identity $\bs{N}=\Lo(\bs{N})$ together with $\U\circ\Lo=0$ gives
	$$
	\begin{aligned}
	\U(\bs{N}C_jY_j)&\,{=}\,\U(\bs{N}C_j^e){+}\U(\Lo(\bs{N})\Lo(C_jY_j))\\
	&\,{=}\,\U(\bs{N}C_j^e)
	\end{aligned}
	$$
	which is \eqref{r3}. All this finishes the necessity proof.
			
	\textit{Sufficiency.} Let \eqref{SHi} and \eqref{couplings} be feasible.
	
	\textbf{Step 1} (\textit{Construction of Lyapunov matrix $\Xc_1$, multiplier $\Xc_2$})\\
	As in the sufficiency proof of Theorem~\ref{theo:synthesis1}, choose invertible blocks
	$U_i,\,V_i$ of the matrices $\Yc_i,\,\Zc_i$ in (\ref{fac}) such that $\Yc_i^T\Zc_i=\smat{Y_i^T&Y_i^TX_i+V_i^TU_i\\I&X_i}$ is symmetric for $X_i,\,Y_i$ from \eqref{tgs1}, \eqref{tgs2}. In particular, as motivated by the factorizations (\ref{fac}) with \eqref{facl1}, \eqref{fac31}, we ensure the right triangular structure of $U_i,\,V_i$.
	
	This is achieved for $i=1$ as in the nominal case [\citen{scherer2014}, proof of Theorem~2] by taking the lower triangular matrices
	$$
	U_1^T:=\mat{cc}{I_{n}&0\\0&I_{n}}\ \text{and}\ V_1:=\mat{cc}{\bs{Y}^T_{\bs{2}}-\bs{X_2}^{\hspace{-0.8ex}T}\bs{Y_1}&0\\I-\bs{X_3}^{\hspace{-1.1ex}T}\bs{Y_1}&\bs{X_2}-\bs{X_3}^{\hspace{-1.1ex}T}\bs{Y_2}}
	$$
	which are proven to be invertible in \cite{scherer2014} due to $\Xhb\cg0$.
	
	If $i=2$, we infer symmetry of $\Yc_2^T\Zc_2$ by taking, as for $i=1$, the $2|l|\ti 2|l|$ lower triangular matrices
	$$
	U_2^T:=\mat{cc}{I_{l}&0\\0&I_{l}}\ \text{and}\
	V_2:=\mat{cc}{\bs{Q}^T_{\bs{2}}-\bs{P}^T_{\bs{2}}\bs{Q_1}&0\\
	I-\bs{P}^T_{\hspace{-0.5ex}\bs{3}}\bs{Q_1}&\bs{P_2}-\bs{P}^T_{\hspace{-0.5ex}\bs{3}}\bs{Q_2}}.
	$$
	By perturbation of $X_2,\,Y_2$ and compactness of $\mathbf{V}$, we can infer invertibility of $V_2$ such that the strict inequalities in \eqref{SHi}, \eqref{couplings} and for $\bs{P_3}\in\mathbf{P}_p$, $\bs{Q_1}\in\mathbf{P}_d$ persist  to hold for all $\De\in\mathbf{V}$.
	
	Since $U_i,\,V_i$ are invertible, the same is true for $\Yc_i,\,\Zc_i$ in (\ref{fac}). After congruence transformation with $\Yc_i^{-1}$, we infer symmetry of $\Xc_i:=\Zc_i\Yc_i^{-1}$ and validity of the factorization in \eqref{fac}. In particular, $\Yc_1^T\Zc_1=\Xhb\cg0$ directly implies $\Xc_1\cg0$.
	
	\textbf{Step 2} (\textit{Construction of scheduling function $\hat{\De}_c$}).\\
	We drop again the arguments of $\De_l(.)$, $\hat{\De}_c(.)$, $\De_{lc}(.)$ and recall \eqref{subblocks}
	with entries $J_{ij}=J_{ij}(\De_l,\De_l^T)\in\R^{l\ti l}$ depending on $\De_l$ and $\De_l^T$.
	As in the proof of Theorem~\ref{theo:synthesis1}, $\bs{Q_1}\in \mathbf{P}_d$ and $\bs{P_3}\in\mathbf{P}_p$ imply the first and last inequality in \eqref{diagonal}, while the second one holds due to \eqref{couplings}. Since $U_2,\,V_2$ are invertible and triangular, let us choose
	$
	\hat{\De}_c:=
	-U_2^{-T}\smat{J_{21}&0\\
		J_{31}&J_{32}}V_2^{-1}
	$
	as the scheduling function; by definition of $J_{ij}$, we get the explicit formula
	$$
	\hat{\De}_c\,{=}\,-U_2^{-T}\mat{cc}{\bs{P}^T_{\bs{2}}\De_l\bs{Q_1}\!+\bs{Q}^T_{\bs{2}}\De_l^T&0\\\bs{P}^T_{\bs{3}}\De_l\bs{Q_1}+\De_l^T&\bs{P}^T_{\bs{3}}\De_l\bs{Q_2}+\De_l^T\bs{P_2}}V_2^{-1}\!.
	$$
	This shows affine dependence of $\hat{\De}_c$ on $\De$, $\De^T$ since $\De_l$ is affinely dependent on $\De$. For this choice of $\hat{\De}_c$, (\ref{exdec}) reads as
	$$
	\diag\left(\He\left[\De_l\bs{Q_1}\right],
	\He\left[\bs{P}^T_{\bs{2}}\De_l\bs{Q_2}\right],
	\He\left[\bs{P}^T_{\bs{3}}\De_l\right]\right)\cg0
	$$
    and is indeed satisfied for all $\Delta\in\mathbf{V}$ due to \eqref{diagonal}.
	Hence, a congruence transformation involving $\Yc_2^{-1}$ leads back to (\ref{X2Dlc}) by using (\ref{fac}) for $i=2$. Thus, $\Xc_2\in\mathbf{P}$.
	
\textbf{Step 3} (\textit{Construction of controller matrices}).\\
	Due to the structure of the design variables, we can verify \eqref{r3}, \eqref{r4} as in the necessity part. Hence, \eqref{klmn2} leads to \eqref{s5}, \eqref{s6}. Since $U_i,\,V_i$ are invertible and triangular, we can define
	$$
	\begin{aligned}
	\hat{A}_{ij}^c&:=U_i^{-T}\Lo(K_{ij}-X_i^TA_{ij}Y_j-X_i^TB_iM_j-\\
	&\hspace{20ex}-L_iC_jY_j+X_i^TB_i\bs{N}C_jY_j)V_j^{-1},\\
	\hat{B}_i^c&:=U_i^{-T}\Lo(L_i-X_i^TB_i\bs{N}),\\
	\hat{C}_j^c&:=\Lo(M_j-\bs{N}C_jY_j)V_j^{-1}	\qquad\text{and}\qquad \hat{D}^c:=\bs{N}
	\end{aligned}
	$$
	to infer the desired controller structure of $\Kc_3$ in \eqref{clk3} and \eqref{s4}. Moreover, these choices and \eqref{s5}, \eqref{s6} imply the validity of the relation  \eqref{s2}. Congruence transformations with $\Yc_i^{-1}$ and the factorizations (\ref{fac}) lead back to \eqref{LHi}.
\hfill $\blacksquare$

\begin{IEEEbiography}[{\includegraphics[width=1in,height=1.25in]{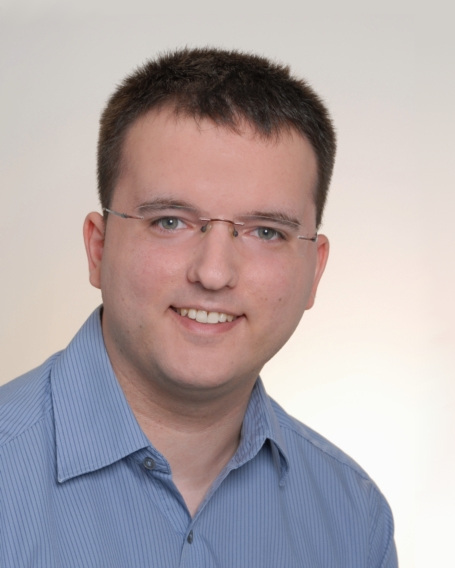}}]{Christian~A.~R\"osinger}
	received the M.Sc. degree in mathematics in 2016 from the University of Stuttgart, Germany, where he is currently pursuing the Ph.D. degree in mathematics.
	
	He is currently affiliated
	with the Chair of Math\nolinebreak ematical Systems Theory at the Department of Mathematics, University of Stuttgart.
\end{IEEEbiography}

\begin{IEEEbiography}[{\includegraphics[width=1in,height=1.25in]{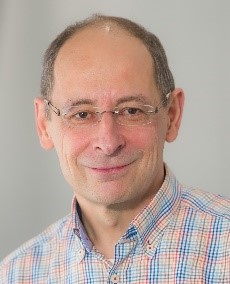}}]{Carsten~W.~Scherer}
	(F'13)
received his Ph.D. degree in mathematics from the University of W\"urzburg (Germany) in 1991. In 1993, he joined Delft University of Technology (The Netherlands) where he held positions as an assistant and associate professor. From December 2001 until February 2010 he was a full professor within the Delft Center for Systems and Control at Delft University of Technology. Since March 2010 he holds the Chair for Mathematical Systems Theory in the Department of Mathematics at the University of Stuttgart (Germany).

Dr. Scherer acted as the chair of the IFAC technical committee on Robust Control (2002-2008), and he has served as an associated editor for the IEEE Transactions on Automatic Control, Automatica, Systems and Control Letters and the European Journal of Control. Since 2013 he is an IEEE fellow ``for contributions to optimization-based robust controller synthesis''.

Dr. Scherer's main research activities cover various directions in developing advanced optimization-based controller design algorithms and their application to mechatronics and aerospace systems.
\end{IEEEbiography}


\begin{thebibliography}{00}

\bibitem{packard1994}
A.~Packard, ``Gain scheduling via linear fractional transformations,'' \emph{Syst. Control Lett.}, vol.~22, no.~2, pp. 79--92, 1994.

\bibitem{apkarian1995}
P.~Apkarian and P.~Gahinet, ``A convex characterization of gain-scheduled $\mathcal{H}_\infty$ controllers,'' \emph{IEEE Trans. Autom. Control}, vol.~40, no.~5, pp. 853--864, 1995.

\bibitem{tien2016}
H.~N.~Tien, C.~W.~Scherer, J.~M.~A.~Scherpen, and V.~M\"uller, ``Linear parameter varying control of doubly fed induction machines,'' \emph{IEEE Trans. Ind. Electron.}, vol.~63, no.~1, pp. 216--224, 2016.

\bibitem{mustaki2019}
S.~Mustaki, A.-T.~Nguyen, P.~Chevrel, M.~Yagoubi and F.~Fauvel, ``Comparison of two robust static output feedback {$H_2$} design approaches for car lateral control,'' in \emph{Proc. 18th Eur. Control Conf.}, 2019, pp. 716--723.

\bibitem{becker1995}
G.~Becker, ``Parameter-dependent control of an under-actuated mechanical system,'' in \emph{Proc. 34th IEEE Conf. Decision and Control}, 1995, pp. 543--548.

\bibitem{wu1996}
F.~Wu, X.~H.~Yang, A.~Packard, and G.~Becker, ``Induced {$L_2$}-norm control for {LPV} systems with bounded parameter variation rates,'' \emph{Int. J. Robust Nonlin.}, vol.~6, no.~9-10, pp. 983--998, 1996.

\bibitem{helmersson98}
A.~Helmersson, ``$\mu$ synthesis and {LFT} gain scheduling with real uncertainties,'' \emph{Int. J. Robust Nonlin.}, vol.~8, no.~7, pp. 631--642, 1998.

\bibitem{scorletti1998}
G.~Scorletti and L.~{El~Ghaoui}, ``Improved {LMI} conditions for gain scheduling and related control problems,'' \emph{Int. J. Robust Nonlin.}, vol.~8, no.~10, pp. 845--877, 1998.

\bibitem{voulgaris2000}
P.~G.~Voulgaris, ``Control of nested systems,'' in \emph{Proc. Amer. Control Conf.}, 2000, pp. 4442--4445.

\bibitem{blanchini2012}
F.~Blanchini, D.~Casagrande, S.~Miani, and U.~Viaro, ``An {LPV} control scheme for induction motors,'' in \emph{Proc. 51st IEEE Conf. Decision and Control}, 2012, pp. 7602--7607.

\bibitem{bucc2019}
L.~Buccafusca, J.~P.~Jansch-Porto, G.~E.~Dullerud, and C.~L.~Beck, ``An application of nested control synthesis for wind farms,'' \emph{IFAC-PapersOnLine}, vol.~52, no.~20, pp. 199--204, 2019.

\bibitem{roesinger2019b}
C.~A.~R\"osinger and C.~W.~Scherer, ``A scalings approach to $\mathcal{H}_2$-gain-scheduling synthesis without elimination,'' \emph{IFAC-PapersOnLine}, vol.~52, no.~28, pp. 50--57, 2019.

\bibitem{roesinger2020}
C.~A.~R\"osinger and C.~W.~Scherer, ``Lifting to passivity for $\mathcal{H}_2$-gain-scheduling synthesis with full block scalings,'' \emph{IFAC-PapersOnLine}, vol.~53, no.~2, pp. 7292--7298, 2020.

\bibitem{scherer2013}
C.~W.~Scherer, ``Structured {$H_\infty$}-optimal control for nested interconnections: A state-space solution,'' \emph{Syst. Control Lett.}, vol.~62, no.~12, pp. 1105--1113, 2013.

\bibitem{scherer2014}
C.~W.~Scherer, ``{$H_\infty$}- and {$H_2$}-synthesis for nested interconnections: A direct state-space approach by linear matrix inequalities,'' in \emph{21st Int. Symp. Math. Theory of Netw. and Syst.}, 2014, pp. 1589--1594.

\bibitem{lessard2015}
L.~Lessard and S.~Lall, ``Optimal control of two-player systems with output feedback,'' \emph{IEEE Trans. Autom. Control}, vol.~60, no.~8, pp. 2129--2144, 2015.

\bibitem{scherer2000}
C.~W.~Scherer, ``Robust mixed control and linear parameter-varying control with full block scalings,''
in \emph{Advances in linear matrix inequality methods in control}, L.~El~Ghaoui and S.-I.~Niculescu, Eds.\, Philadelphia: SIAM, 2000, pp. 187--207.

\bibitem{zhou1996}
K.~Zhou, J.~C.~Doyle and K.~Glover, \emph{Robust and optimal control}.\, Englewood Cliffs, New Jersey: Prentice Hall, 1996, ch. 10.

\bibitem{pagfer00}
F.~Paganini and E.~Feron, ``Linear matrix inequality methods for robust $H_2$ analysis: {A} survey with comparisons,'' in \emph{Advances in linear matrix inequality methods in control}, L.~El~Ghaoui and S.-I.~Niculescu, Eds.\, Philadelphia: SIAM, 2000, pp. 129--151.

\bibitem{masubuchi1998}
I.~Masubuchi, A.~Ohara, and N.~Suda, ``{LMI}-based controller synthesis: A unified formulation and solution,'' \emph{Int. J. Robust Nonlin.}, vol.~8, no.~8, pp. 669--686, 1998.

\bibitem{scherer1997}
C.~W.~Scherer, P.~Gahinet, and M.~Chilali, ``Multiobjective output-feedback control via {LMI} optimization,'' \emph{IEEE Trans. Autom. Control}, vol.~42, no.~7, pp. 896--911, 1997.

\bibitem{gohsaf1995}
K.-C.~Goh and M.~G.~Safonov, ``Robust analysis, sectors, and quadratic functionals,'' in \emph{Proc. 34th IEEE Conf. Decision and Control}, 1995, pp. 1988--1993.

\bibitem{iwasaka1998}
T.~Iwasaki and S.~Hara, ``Well-posedness of feedback systems: Insights into exact robustness analysis and approximate computations,'' \emph{IEEE Trans. Autom. Control}, vol.~43, no.~5, pp. 619--630, 1998.

\bibitem{roesinger2019a}
C.~A.~R\"osinger and C.~W.~Scherer, ``A flexible synthesis framework of structured controllers for networked systems,'' \emph{IEEE Trans. Control Netw. Syst.}, vol.~7, no.~1, pp. 6--18, 2020.

\bibitem{Sch06}
C.~W.~Scherer, ``LMI relaxations in robust control,'' \emph{Eur. J. Control}, vol.~12, no.~1, pp. 3--29, 2006.

\bibitem{VeeSch16a}
J.~Veenman, C.~W.~Scherer, and H.~K\"{o}ro\u{g}lu, ``Robust stability and performance analysis based on integral quadratic constraints,'' \emph{Eur. J. Control}, vol.~31, pp. 1--32, 2016.
	
\bibitem{lofberg2004}
J.~L\"ofberg, ``YALMIP : A toolbox for modeling and optimization in MATLAB,'' in \emph{Proc. CACSD Conf.}, 2004, pp. 284--289.


\end{thebibliography}
\end{document}